\title[Symmetric functions and the rational Steenrod
  Algebra]{Deformation of symmetric functions and the
  rational Steenrod algebra}
\author{Florent Hivert}
\address{Institut Gaspard Monge, Université de Marne-la-Vallée, France}
\email{Florent.Hivert@univ-mlv.fr}
\author{Nicolas M. Thiéry}
\address{Laboratoire de Probabilités, Combinatoire et Statistiques,
  Université Claude Bernard Lyon I, France}
\email{Nicolas.Thiery@u-psud.fr}
\newtheorem{thm}{Theorem}[section]
\newtheorem{lem}{Lemma}[section]
\newtheorem{prop}{Proposition} 
\newtheorem{cor}{Corollary} 
\theoremstyle{definition}
\newtheorem{defn}{Definition}
\newtheorem{problem}{Problem}
\newtheorem{conjecture}{Conjecture}
\theoremstyle{remark}
\newtheorem{rem}{Remark}
\newtheorem{rems}{Remarks}
\newcommand{\scalar}[2]{\langle#1\,|\,#2\rangle}
\newcommand{\K}{\mathbb{K}}
\newcommand{\Z}{\mathbb{Z}}
\newcommand{\R}{\mathbb{R}}
\newcommand{\C}{\mathbb{C}}
\newcommand{\N}{\mathbb{N}}
\newcommand{\Q}{\mathbb{Q}}
\newcommand{\indexP}[2][q]{_{\def\tmp{#1}\ifx\tmp\@empty\else#1,\fi#2}}
\newcommand{\genP}[1]{#1\indexP}
\renewcommand{\P}{\genP{P}}
\newcommand{\D}{\genP{P^*}}
\newcommand{\tP}{\genP{\widetilde P}}
\newcommand{\sg}{\mathfrak{S}}
\newcommand{\sym}{\operatorname{Sym}}
\newcommand{\steenrod}{\mathcal{A}}
\newcommand{\weyl}{\operatorname{Weyl}}
\newcommand{\steen}{\operatorname{Steen}}
\newcommand{\qsteen}[1][q]{\operatorname{Steen_{#1}}}
\newcommand{\tqsteen}[1][q]{\operatorname{TSteen_{#1}}}
\newcommand{\hit}{\operatorname{Hit}}
\newcommand{\qhit}[1][q]{\operatorname{Hit_{#1}}}
\newcommand{\tqhit}[1][q]{\operatorname{THit_{#1}}}
\newcommand{\harm}{\operatorname{Harm}}
\newcommand{\qharm}[1][q]{\operatorname{Harm_{#1}}}
\newcommand{\tqharm}[1][q]{\operatorname{THarm_{#1}}}
\newcommand{\p}{\partial}
\newcommand{\isom}{\simeq}
\newcommand{\hilb}[1][t]{\operatorname{Hilb}_{#1}}
\newcommand{\rank}{\operatorname{Rank}}
\newcommand{\coeff}{\operatorname{Coeff}}
\newcommand{\length}{\ell}
\newcommand{\tensor}{\otimes}
\newcommand{\ord}{\operatorname{ord}}
\newcommand{\os}{\operatorname{o}}
\newcommand{\specht}{\operatorname{Sp}}
\newcommand{\partof}{\vdash}
\newcommand{\strng}{\operatorname{String}}
\newcommand{\schub}{\operatorname{S}}
\gdef\includeversion#1(%
\gdef\csname #1\endcsname()%
\gdef\csname end#1\endcsname()%
\gdef\csname ifver#1\endcsname(\@iden)%
\gdef\excludeversion#1(%
\gdef\csname #1\endcsname%
\gdef\csname #1@NOTE\endcsname ##1\end{#1}%
\gdef\csname #1END@NOTE\endcsname%
\gdef\csname ifver#1\endcsname%
\gdef\includever#1(\csname include#1\endcsname)%
\newcommand{\commentaire}[1]{}
\def\SetTableau#1#2#3#4{%
  \gdef\Tabvrule{\vrule\vrule width-0.4pt}
  \gdef\Tabhrule{\hrule\hrule height-0.4pt}
  \gdef\Tabstrut{\vrule height#1 depth#2 width0pt\relax}
  \gdef\Tabbox##1{\hbox to #3{\hskip0.4pt\hfill\Tabstrut$#4##1$\hfill}}
} 
\def\NormalTableau{\SetTableau{2.25ex}{0.75ex}{3ex}{}}
\def\Case#1{\vcenter{\Tabhrule%
                   \hbox{\Tabvrule\Tabbox{#1}\Tabvrule}\Tabhrule}}
\def\GenTab#1{\vcenter{\halign{&$\Case{##}$\cr#1}}\egroup}
\def\Tableau{%
  \bgroup%
  \let\ =\omit%
  \let\\=\cr%
  \offinterlineskip\GenTab}
\begin{document}

\begin{abstract}
  In 1999, Reg Wood conjectured that the quotient of
  $\Q[x_1,\dots,x_n]$ by the action of the rational Steenrod algebra
  is a graded regular representation of the symmetric group $\sg_n$.
  As pointed out by Reg Wood, the analog of this statement is a well
  known result when the rational Steenrod algebra is replaced by the
  ring of symmetric functions; actually, much more is known about the
  structure of the quotient in this case.

  We introduce a non-commutative $q$-deformation of the ring of
  symmetric functions, which specializes at $q=1$ to the rational
  Steenrod algebra. We use this formalism to obtain some partial
  results.  Finally, we describe several conjectures based on an
  extensive computer exploration. In particular, we extend Reg Wood's
  conjecture to $q$ formal and to any $q\in \C$ not of the form
  $-a/b$, with $a\in\{1,\dots,n\}$ and $b\in\N$.
\end{abstract}

\maketitle

\tableofcontents

\section{Introduction}

The \emph{rational Steenrod algebra} is the subalgebra $\steenrod$ of
the Weyl algebra generated by the \emph{rational Steenrod squares}
$D_k:=\sum_i x_i^{k+1}\p_i$~\cite{Wood.DOSA.1997}. The Steenrod
squares are derivations, and satisfy the Lie relations
$[D_k,D_l]=(l-k)D_{k+l}$. In particular, the Steenrod algebra is
generated by just $D_1$ and $D_2$.

Let $\C[X_n]:=\C[x_1,\dots,x_n]$ be a ring of polynomials, and set
$\pi:=x_1\dots x_n$.  Consider the action of the Steenrod algebra on
$\C[X_n]$ through the algebraic Thom map, so that for $f\in \steenrod$
and $p\in\C[X_n]$,
\begin{equation}
  \rho(f).p:=\frac{1}{\pi}f\pi.p.
\end{equation}
This action seems to share properties with the natural action of the
ring of symmetric polynomials on polynomials by multiplication.
\begin{conjecture}[Rational hit conjecture of
  Reg Wood~\cite{Wood.PSA.1998,Wood.HPSA.2001}] The quotient
  $\C[X_n]_{/\steenrod^+\C[X_n]}$ is a graded regular representation
  of the symmetric group $\sg_n$.
\end{conjecture}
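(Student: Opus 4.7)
The plan is to prove the conjecture by embedding $\steenrod$ in a $q$-deformed family: introduce a subalgebra $\qsteen$ of the Weyl algebra, depending on a parameter $q\in\C$, whose generators are $q$-analogues $D_{q,k}$ of the Steenrod squares, manifestly symmetric in $x_1,\dots,x_n$, and arranged so that $\qsteen$ specialises to $\rho(\steenrod)$ at $q=1$ and degenerates to the action of the classical ring of symmetric polynomials $\sym$ at a convenient base point (for instance $q=0$ after an appropriate rescaling). Since $\sg_n$ acts on $\C[X_n]$ by algebra automorphisms commuting with every $\qsteen$, the quotient $C_q := \C[X_n]/\qsteen^+\C[X_n]$ is automatically a graded $\sg_n$-module, and the task becomes to show that $C_1$ carries the graded regular representation.

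First I would establish the statement at generic $q$. At the symmetric specialisation Chevalley's theorem identifies the analogous quotient with the coinvariant algebra, which is the graded regular representation of Hilbert series $[n]_t!$. To promote this to generic $q$, I would choose the $D_{q,k}$ so that their leading terms under a well-chosen filtration recover the classical power sums $p_{k+1}$, and then apply a PBW-type argument: $\qsteen$ becomes a non-commutative polynomial algebra in its first $n$ generators, and $\C[X_n]$ becomes a free $\qsteen$-module of rank $n!$ whose module generators support exactly the regular $\sg_n$-character.

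Second I would propagate the result from generic $q$ to $q=1$ by a semicontinuity-and-matching-dimensions argument. The graded dimensions of $C_q$ are upper-semicontinuous in $q$ (relations in $\qsteen^+\C[X_n]$ can specialise but not vanish), so the generic value $n!$ is an upper bound in each degree near $q=1$. To match this bound I would exhibit a family of $\sg_n$-equivariant polynomial representatives spanning $C_q$ uniformly in $q$ in a neighbourhood of $1$ --- for instance a $q$-deformation of the Artin monomial basis or of the Garsia--Stanton descent basis --- and verify that the transition matrix to the classical coinvariant basis has determinant a polynomial in $q$ that does not vanish at $q=1$.

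The hard part is exactly the control of this singular locus. The abstract already flags $q=-a/b$ with $a\in\{1,\dots,n\}$ and $b\in\N$ as genuinely exceptional, so any deformation argument must carry a careful bookkeeping of denominators and rule out $q=1$. Because the Steenrod squares are derivations rather than multiplications, the direct Chevalley--Shephard--Todd argument does not apply; one is forced to replace the polynomial-ring structure of $\sym$ by a more delicate PBW analysis of $\qsteen$ whose regularity at $q=1$ is essentially equivalent to the conjecture itself. Producing such a PBW basis, and explicitly tracking the values of $q$ at which the deformed harmonics $\qharm$ fail to match the classical ones, is where I expect the main obstruction to reside.
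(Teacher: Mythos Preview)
The statement you are trying to prove is an \emph{open conjecture}; the paper does not prove it. What the paper does is precisely set up the $q$-deformation framework you outline (Definition~\ref{defn.qsteen}), prove that the conjecture holds in low degree $d\le n$ (Corollary~\ref{cor.truncated}), establish the semicontinuity inequalities you describe (Propositions in Section~\ref{sec.Specializations}), and then explain in Section~7 why the remaining steps of exactly your strategy break down.

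Your proposal has a concrete gap at the generic-$q$ step. You write that a PBW-type argument should make $\qsteen$ into ``a non-commutative polynomial algebra in its first $n$ generators, and $\C[X_n]$ becomes a free $\qsteen$-module of rank $n!$''. This is false: for $q\ne 0$ the algebra $\qsteen(X_n)$ is strictly \emph{larger} than $\sym(X_n)$ (the paper shows $\hilb(\sym(X_n))\le\hilb(\qsteen(X_n))$ with strict inequality starting in degree $n+1$), so $\C[X_n]$ cannot be a free module over it by a Hilbert series count. The leading-term filtration you propose does not collapse $\qsteen(X_n)$ to $\sym(X_n)$; there are more operators than symmetric functions, and the paper explicitly lists ``$\K[X_n]$ is not a free $\qsteen(X_n)$ module'' as the first obstruction. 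This is also why your semicontinuity step cannot close: upper-semicontinuity gives $\dim\qharm\le\dim\harm$, but the matching lower bound is the content of the conjecture (Proposition~\ref{prop.goingup}~(b)), and the paper finds no $q$-analogue of the Artin/Schubert basis or of elementary symmetric functions that would furnish it (Sections~\ref{subsection.Elementary} and~\ref{subsection.Schubert}). Your final paragraph correctly identifies that producing such a basis ``is essentially equivalent to the conjecture itself''; that is the honest assessment, and it is where the paper stops too.
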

In this article, we present preliminary results from research in
progress around this conjecture. We refer
to~\cite{Wood.DOSA.1997,Wood.PSA.1998,Wood.HPSA.2001} for motivations.

It is actually easier to get rid of the algebraic Thom map, by
conjugating once for all the Steenrod algebra by this map. So, we
consider instead the isomorphic algebra denoted $\steen$ generated by
the Weyl operators
\begin{equation}
  \P[]k:=\frac{1}{\pi}D_k\pi=\sum_i x_i^k (1+x_i\p_i).
\end{equation}
Note that those operators are no longer derivations.

Now, $\P[]k$ appears clearly as a usual symmetric power-sum, with a
deformation.  Since the goal is to transfer properties of the ring of
symmetric polynomials to the Steenrod algebra, this suggests that we
interpolate continuously between the two. Hence, we introduce the
\emph{$q$-Steenrod algebra} denoted $\qsteen$ which is generated by
the Weyl operators
\begin{equation}
  \P k:=\sum_i x_i^k (1+qx_i\p_i).
\end{equation}
The $0$-Steenrod algebra is the ring of symmetric polynomials, while
the $1$-Steenrod algebra is the rational Steenrod algebra conjugated
by the algebraic Thom map.

In the following, $q$ can be either a given complex number $q_0$, or a
formal parameter. In the later case we take $\K:=\C(q)$ as base field,
or $\K:=\C[q]$, when we just need a base ring. Roughly speaking, all
the statements we consider are of algebraic nature, and hold for $q$
formal if, and only if, they hold for $q$ generic (that is in most
cases for all but a countable set).

\subsection{Summary of the results}

In~\cite{Garsia_Haiman.OHGR}, a concrete realization $\harm$ of the
quotient $\C[X_n]_{/\sym^+\C[X_n]}$ is constructed as a space of
so-called \emph{harmonics}. Following a similar approach, we construct
a concrete realization $\qharm$ of the quotient
$\C[X_n]_{/\qsteen^+\C[X_n]}$.

Both our theoretical results, and our computer exploration leads us to
extend Reg Wood's conjecture as follows:
\begin{conjecture}
  \label{conj.main}
  Let $q$ be formal, or a complex number not of the form $-a/b$, with
  $a\in\{1,\dots,n\}$, $b\in\N$ and $a\leq b$. Then, for any $n$,
  $\qharm$ is isomorphic to $\harm$ as a graded $\sg_n$-module. In
  particular, it is isomorphic to the regular representation of
  $\sg_n$.
\end{conjecture}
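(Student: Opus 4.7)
The plan is to proceed by a flat deformation argument, anchoring the proof at $q=0$ where the $0$-Steenrod algebra is the ring $\sym$ of symmetric polynomials. There, Chevalley's theorem gives $\qharm[0] \isom \harm$ as graded $\sg_n$-modules, with Hilbert series $\prod_{i=1}^{n}(1+t+\dots+t^{i-1})$ and total dimension $n!$. The aim is to propagate this identification to every $q$ outside the exceptional locus $E := \{-a/b : a\in\{1,\dots,n\},\ b\in\N,\ a\leq b\}$.

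First, I would view $\qharm$ as a graded family over $\C[q]$. Since each graded piece of $\qsteen^+\C[X_n]$ is cut out by rank conditions on matrices whose entries are polynomial in $q$, the dimension of each graded piece of the quotient is upper semicontinuous in $q$, so in every degree $d$ one has $\dim\qharm(d) \geq \dim\harm(d)$ for all $q$, with equality on a Zariski-open set. In particular, the generic total dimension is at least $n!$ in each graded degree.

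To obtain the reverse inequality and pin down the exceptional locus, I would build an explicit family of $n!$ elements that descend to a basis of $\qharm$ for $q \notin E$. A natural candidate is a $q$-deformation of the Garsia--Stanton descent basis of $\harm$, obtained by replacing classical power-sums with the deformed operators $P_{q,k} = \sum_i x_i^k(1+qx_i\partial_i)$ in the apolar construction of~\cite{Garsia_Haiman.OHGR}. The coefficients expressing these candidates as polynomials in $x_1,\dots,x_n$ should be rational functions of $q$, with denominators tracking the eigenvalues of the Euler-type perturbation $qx_i\partial_i$ on monomials of bounded total degree; a direct computation in low rank suggests these denominators are products of factors $a+bq$ for $a\in\{1,\dots,n\}$, $b\in\N$, $a\leq b$, which matches $E$ exactly.

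The main obstacle is twofold. On one hand, the operators $P_{q,k}$ are neither derivations nor multiplications, so classical tools such as the Koszul complex for a regular sequence of invariants, or Chevalley-style freeness arguments, do not apply directly; one must instead control the two-sided structure of $\qsteen^+\C[X_n]$ through its $\sg_n$-equivariance and through the explicit commutators with multiplication by $x_j$. On the other hand, matching the exceptional locus exactly to $E$ rather than to some larger discriminant locus is delicate: for each $q=-a/b \in E$ one must exhibit an extra hit relation that forces $\dim\qharm$ to jump, and for each $q\notin E$ one must rule out any accidental rank drop in the candidate basis. This sharp two-sided matching is, to my mind, the most delicate step of the plan.
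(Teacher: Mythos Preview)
The statement you are attempting to prove is labelled \emph{Conjecture} in the paper, and the paper gives no proof of it; it only establishes partial results (Propositions~\ref{prop.goingup} and~\ref{prop.goingdown}, Corollary~\ref{cor.truncated}) and explains why several natural strategies fail (Section~7). So there is no ``paper's own proof'' to compare against.

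More importantly, your first step has the semicontinuity backwards. You correctly observe that $\dim(\K[X_n]/\qhit)$ is upper semicontinuous in $q$; but upper semicontinuity means the \emph{generic} value is the \emph{minimum}, and special fibres can only be larger. Since $\harm=\qharm[0]$ is a specialization, what semicontinuity actually gives is
\[
\hilb(\qharm)\ \leq\ \hilb(\harm)\qquad\text{for $q$ formal},
\]
which is exactly what the paper proves in Proposition~\ref{prop.harm.specialization}(d). The inequality you claim to get for free, namely $\dim\qharm(d)\geq\dim\harm(d)$ for all $q$, does \emph{not} follow; by Proposition~\ref{prop.goingup}(b) it is in fact equivalent to the conjecture itself in the formal case. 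So your ``easy direction'' is the open one, and your plan assumes what is to be proved.

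The second half of your plan --- constructing $n!$ explicit $q$-harmonics by deforming a classical basis --- is the right kind of idea for the genuinely open direction, but the paper devotes Section~7 to explaining why the obvious candidates fail: there are no $q$-analogues of elementary symmetric functions vanishing on $X_n$ for $k>n$, no operators of degree $-1$ commuting with $\qsteen$ (so no divided-difference or Schubert machinery), and $\K[X_n]$ is not a free $\qsteen(X_n)$-module. Your ``$q$-deformation of the Garsia--Stanton descent basis via the apolar construction'' is not made precise enough to see whether it evades these obstructions, and the claim that the denominators are exactly products of $(a+bq)$ with $a\leq n$, $a\leq b$ is only asserted on the basis of low-rank experiments. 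Finally, even if you produced $n!$ independent elements of $\qharm[q_0]$ for $q_0\notin E$, that would only give $\dim\qharm[q_0]\geq n!$, which already follows from semicontinuity once the formal case is known; it would not by itself rule out extra harmonics at such $q_0$, which is what Proposition~\ref{prop.goingdown}(b) requires.
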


The main results of this article are summarized in the following two
propositions. Recall that a monomial $x^K=x_1^{k_1}\cdots x_n^{k_n}$
in $\K[X_n]$ is called \emph{staircase monomial} if $k_i\leq n-i$, for
all $i$.
\begin{prop}
  \label{prop.goingup}
  The following are equivalent:
  \begin{itemize}
  \item[(a)] Conjecture~\ref{conj.main} holds for $q$ formal;
  \item[(b)] For any $n$, $\dim\qharm\geq \dim\harm$;
  \item[(c1)] For any $n$, and any polynomial $p\in\qharm$, all the
    variables have degree at most $n-1$ in $p$;\footnote{2007/06: as
      noted by Adriano Garsia, this statement is in fact incorrect}
  \item[(c2)] For any $n$, the staircase monomials are the leading
    terms of the polynomials in $\qharm$ for the lexicographic order.
  \item[(d)] Any basis of $\C[X_n]_{/\sym^+\C[X_n]}$ is a basis of
    $\K[X_n]_{/\qsteen^+\K[X_n]}$.
  \end{itemize}
\end{prop}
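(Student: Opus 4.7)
The plan is to organize the equivalences around a single underlying fact: the graded quotient $\C[q][X_n]/\qsteen^+\C[q][X_n]$ is a family of $\C[q]$-modules whose fiber at $q=0$ is the classical coinvariant algebra (realized concretely by $\harm$) and whose generic fiber is $\qharm$. In each graded degree $d$, the piece $Q_d$ is a finitely generated $\C[q]$-module, so upper semicontinuity of fiber dimensions yields the a priori bound $\dim_\K \qharm_d \leq \dim_\C \harm_d$ for every $d$. Hypothesis (b), combined with this semicontinuity, then forces equality in each graded degree.

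Equality of ranks makes each $Q_d$ free of rank $\dim \harm_d$ over the localization $\C[q]_{(q)}$, since any $q$-torsion would have increased the special fiber dimension. This single observation drives the equivalences among (a), (b), and (d). For (b) $\Rightarrow$ (d), Nakayama's lemma over this DVR implies that any lift of a basis of $\harm_d = Q_d/qQ_d$ is a basis of $Q_d$, hence a $\K$-basis of $\qharm_d = Q_d \otimes_{\C[q]}\K$; the converse (d) $\Rightarrow$ (b) is immediate from cardinalities. For (b) $\Rightarrow$ (a), the same argument applies isotypic-component by isotypic-component, since the $\sg_n$-action commutes with multiplication by $q$ and so preserves the $\C[q]$-module structure; this produces a graded $\sg_n$-module isomorphism $\qharm \cong \harm$. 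The implication (a) $\Rightarrow$ (b) is trivial.

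The equivalence (b) $\Leftrightarrow$ (c2) requires more care. The direction (c2) $\Rightarrow$ (b) is immediate: $n!$ polynomials in $\qharm$ with pairwise distinct staircase leading monomials are automatically linearly independent. For (b) $\Rightarrow$ (c2), the naive strategy of lifting the classical triangular basis $\{h_K\}_{K\text{ staircase}}$ of $\harm$, with $\lead(h_K)=x^K$, to harmonic elements $H_K \in \qharm$ can fail: a coefficient in $\C[q]$ of a monomial strictly larger than $x^K$ may vanish at $q=0$ without vanishing identically, so that the $\K$-leading term of $H_K$ lies strictly above $x^K$ in lex order.

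The main obstacle is precisely this leading-term identification: the lex leading term is not semicontinuous under the specialization $q \to 0$, so the classical information cannot be transported directly. I would attempt to overcome it by producing, for each staircase $x^K$, a family $p_K(q) \in \qharm$ deforming $h_K$ with enough control on its $q$-dependence to pin down its leading term — or equivalently, by constructing, for each non-staircase monomial $x^L$, an element of $\qsteen^+\K[X_n]$ with $\K$-leading term $x^L$. This would identify the lex initial ideal of $\qsteen^+\K[X_n]$ with the classical initial ideal of $\sym^+\C[X_n]$, and is the Gröbner-theoretic technical heart of the argument.
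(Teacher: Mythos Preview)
Your treatment of the equivalences among (a), (b), and (d) via semicontinuity and Nakayama over $\C[q]_{(q)}$ is correct and amounts to a clean repackaging of the paper's specialization lemmas and Proposition~\ref{prop.harm.specialization} in standard commutative-algebra language; the isotypic refinement you invoke for (b)$\Rightarrow$(a) matches the paper's remark that the central idempotents of $\sg_n$ commute with specialization. Likewise (c2)$\Rightarrow$(b) is immediate for both of you.

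The genuine gap is (b)$\Rightarrow$(c1) and (b)$\Rightarrow$(c2), and here your proposed route is the one the paper explicitly tries and abandons (Subsections~\ref{subsection.Elementary} and~\ref{subsection.Schubert}). Your plan is to produce, for each non-staircase monomial $x^L$, an element of $\qsteen^+\K[X_n]$ with lex leading term $x^L$ --- that is, to identify the initial ideal of $\qhit$ with that of $\hit$. Classically this is done via the relation $\sum_i(-1)^i e_i(X_n)h_{k-i}=0$ for $k>n$; but the paper observes that no $q$-analog of $e_k$ vanishes on $X_n$ for $k>n$ (the operators $(\P\lambda)_{\lambda\vdash n+1}$ appear to be linearly independent on $X_n$), and that there are no degree~$-1$ operators commuting with $\qsteen$ to play the role of divided differences. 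So the Gr\"obner attack you sketch is not known to go through.

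The paper's actual argument for (a)$\Leftrightarrow$(c1) (Proposition~\ref{prop.equivShortString}) is orthogonal: it \emph{changes the number of variables}. If one expands $f\in\qharm(X,y)$ in the extra variable $y$, the top coefficient lies in $\qharm(X)$; this gives, under (c1), the recursive bound $\hilb(\qharm(X_{n+1}))\leq(1+t+\cdots+t^n)\hilb(\qharm(X_n))$. Telescoping bounds $\hilb(\qharm)$ by $\hilb(\harm)$, and strictness anywhere would propagate down to degree $d$ on $d{+}1$ variables, contradicting the truncated case (Corollary~\ref{cor.truncated}), which settles degrees $d\leq n$ directly from $\hilb(\tqsteen(X_n))=\hilb(\sym(X_n))$. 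The passage (a)$\Rightarrow$(c2) is then an induction on $n$ using the same top-coefficient map. This variable-reduction mechanism, together with the independently established truncated case, is the missing idea in your proposal.
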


\begin{prop}
  \label{prop.goingdown}
  Assume that conjecture~\ref{conj.main} holds for $q$ generic. Then,
  for $q_0\in\C$, the following are equivalent:
  \begin{itemize}
  \item[(a)] Conjecture~\ref{conj.main} holds for $q_0$;
  \item[(b)] For any $n$, $\dim\qharm[q_0]\leq \dim\harm$;
  \item[(c)] For any $n$, the non-staircase monomials are the leading
    terms of the polynomials in $\qhit$ for the lexicographic order.
  \end{itemize}
\end{prop}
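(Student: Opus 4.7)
The implications $(a)\Rightarrow(b)$ and $(c)\Rightarrow(b)$ are immediate: $(a)$ gives $\dim\qharm[q_0]=n!$, and under $(c)$ the standard Gröbner normal-form argument shows that every polynomial reduces modulo $\qhit[q_0]$ to a $\C$-linear combination of staircase monomials, so $\dim\qharm[q_0]\leq n!$. For $(b)\Rightarrow(a)$, I would use an $\sg_n$-equivariant semi-continuity argument. Since each operator $\P k$ is $\sg_n$-invariant, the hit ideal and its harmonic complement carry an $\sg_n$-action for every value of $q$, and the multiplicity of each irreducible $V_\lambda$ of $\sg_n$ in each graded piece $(\qharm)_d$ is upper semi-continuous in $q$ (it is the co-rank of a $\C[q]$-linear family restricted to the $V_\lambda$-isotypic part of $V_d\otimes\C[q]$, where $V_d:=\C[X_n]_d$). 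The assumption that the generic conjecture holds pins these multiplicities down to the regular-representation values for generic $q$; hence at $q_0$ each multiplicity is at least that value, and summing yields $\dim\qharm[q_0]\geq n!$. Combined with $(b)$, equality is forced in every isotypic component of every degree, which is $(a)$.

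The crux is $(b)\Rightarrow(c)$. The plan is to lift the generic reduced Gröbner basis of the hit ideal to $q_0$. Let $B_d$ denote the set of staircase monomials of degree $d$, and let $W_d(q)\subset V_d\otimes\C(q)$ be the degree-$d$ part of the hit ideal at generic $q$. Let $W_d^{\circ}$ be its integral form over $\C[q]$, namely the $\C[q]$-submodule of $V_d\otimes\C[q]$ generated by $\P k\cdot(V_{d-k}\otimes\C[q])$ for $k\geq 1$. By the generic conjecture together with Proposition~\ref{prop.goingup}, the lex-leading terms of $W_d(q)$ are exactly the non-staircase monomials of degree $d$, and for each such $u$ there is a reduced element $\mu_u(q)=u+\sum_{s\in B_d,\,s<u}c_{u,s}(q)\,s\in W_d(q)$, with coefficients $c_{u,s}\in\C(q)$. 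It suffices to show that, under $(b)$, the $c_{u,s}(q)$ have no pole at $q=q_0$: then $\mu_u|_{q_0}\in\qhit[q_0]$ has leading term $u$, establishing $(c)$. I would control the poles via the Smith normal form of the inclusion $\iota_d:W_d^{\circ}\hookrightarrow V_d\otimes\C[q]$ over the PID $\C[q]$. Its invariant factors $\delta_1\mid\cdots\mid\delta_{r_d}$ control the drop in $\dim(\qhit[q_0])_d$, and the per-degree form of $(b)$, obtained by applying the semi-continuity argument above degreewise, is equivalent to all $\delta_i$ being nonzero at $q_0$. Under this condition $\iota_d$ is saturated over the localization $\C[q]_{(q-q_0)}$, so the reduced row echelon form of $W_d^{\circ}$ can be computed over this DVR with unit pivots, and the $c_{u,s}$ are regular at $q_0$.

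The main obstacle I anticipate lies in the last step: verifying that the \emph{leading columns} of the reduced row echelon form over $\C[q]_{(q-q_0)}$ remain the non-staircase monomials at $q_0$, rather than shifting to other monomials that happen to provide nonzero pivots after specialization (a phenomenon that can occur in abstract linear algebra even when the rank is preserved). This is the point where one must exploit the specific structure of the hit ideal, in particular the linear dependence of $\P k$ on $q$, to rule out pathological pivot degenerations that would preserve dimension but corrupt the leading-term pattern.
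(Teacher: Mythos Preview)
Your handling of $(a)\Rightarrow(b)$, $(c)\Rightarrow(b)$, and $(b)\Rightarrow(a)$ is correct and matches the paper; the paper derives $(b)\Rightarrow(a)$ from its specialization machinery (Proposition~\ref{prop.harm.specialization}), which is exactly your isotypic semi-continuity argument.

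The gap is the one you yourself flag in $(b)\Rightarrow(c)$: saturation of $W_d^{\circ}$ over $\C[q]_{(q-q_0)}$ guarantees that the rank survives specialization, but it does \emph{not} pin the pivot columns; they can slide, as you note. No amount of general linear algebra over a DVR will close this, and you do not close it. The paper avoids the issue entirely by going through $(a)$ rather than $(b)$, and by using a structural feature of the Steenrod operators that your argument never touches: the primitivity $\D k(X,y)=\D k(X)+\D k(y)$. This yields, for each $d$, an injection $\tilde\pi_{y^d}$ from the $y^d$-slice of $\qharm[q_0](X_n,y)$ into $\qharm[q_0](X_n)$. Under $(a)$ for all $n$, the identity $\hilb(\qharm[q_0](X_{n+1}))=(1+t+\dots+t^n)\,\hilb(\qharm[q_0](X_n))$ forces these injections to be isomorphisms for $d\le n$ and zero for $d>n$, so every $q_0$-harmonic in $n$ variables has degree at most $n-1$ in each variable (the $q_0$-analogue of Proposition~\ref{prop.equivShortString}). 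An easy induction on $n$, expanding a harmonic in its last variable and applying the inductive hypothesis to the top coefficient, then shows that the lex-leading term of any $q_0$-harmonic is a staircase monomial. No Gr\"obner specialization is involved. One residual subtlety you should be aware of: this controls the leading terms of $\qharm[q_0]$, whereas item~(c) as written concerns $\qhit$, and the two statements are not formally dual (in $\C^2$ take $W=\langle e_1+e_2\rangle$, $W^\perp=\langle e_1-e_2\rangle$: both have leading term $e_1$). Given the paper's own erratum on item~(c1) of Proposition~\ref{prop.goingup} and the evident typos in this passage, item~(c) is most safely read with $\qharm[q_0]$ in place of $\qhit$.
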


\begin{private}
Comparison of the dimensions of $\qharm$ and $\harm$:

\begin{tabular}{|l|l|l|l|l|}
  \hline
	  		& $q$ formal	& $q=-a/b$		& $q=0$	& $q$ other
    \\\hline
  $n\geq d+1$	 	& $=$ 		& $\geq$ (conj: $=$)	& $=$	& $\geq$ (conj: $=$) \\\hline
  $\qsteen$ truncated 	& $=$ 		& $\geq$, can be $>$	& $=$	& $\geq$ (conj: $=$) \\\hline
  $\qsteen$		& $<=$ (conj: $=$)
				& ? (can be $>$, conj: $\geq$)	& $=$	& ? (conj: $=$)
		\\\hline
\end{tabular}
\end{private}

\subsection{Organization of the paper}

After a background section, we define the $q$-deformed Steenrod
algebra $\qsteen$ (Definition \ref{defn.qsteen}), together with
$q$-hit and $q$-harmonic polynomials. We compute the dimension and the
structure of $\qsteen$ (Theorems \ref{thm.structSteen} and
\ref{thm.isomSteen}).

In the section~\ref{sec.Specializations}, we describe the relations
between the generic setting (when $q$ is a formal parameter), the
complex setting (when $q$ in a complex number) and the classical
setting (when $q=0$).

Then, we analyze the space of polynomials that can be hit from
harmonics by successive application of at most $n$ Steenrod operators.
This provides information in small degrees. In particular we show that
the submodules of degree smaller than $n$ of $\K[X_n]/\sym^+$ and
$\K[X_n]/\qsteen^+$ are isomorphic (Corollary~\ref{cor.truncated}).

In the next section, we investigate the relation between the
$q$-harmonics in $n$ variables and in $n+1$ variables. We construct a
trick to go from $n$ to $n+1$: the so-called \emph{strings}. The main
result of this part is that the conjecture is equivalent to the fact
that the harmonics in $n$-variables are of degree smaller than $n$ in
each variable (Proposition~\ref{prop.equivShortString}). We also
analyze completely the simple cases of $1$ and $2$ variables.

In the next-to-last section we describe some attempts to prove
conjecture~\ref{conj.main} by looking for $q$-analogs of elementary
symmetric function (Subsection~\ref{subsection.Elementary}) and
$q$-analogs of Schubert polynomials
(Subsection~\ref{subsection.Schubert}), and we explain why they fail.

Finally, the last-section is devoted to a computer exploration which
motivates our conjectures.  In particular, we give, for small numbers
of variables and small degrees, a tight superset of the values of $q$
for which conjecture~\ref{conj.main} does not hold.  Most computations
where realized with the open source library
\texttt{MuPAD-Combinat}~\cite{MuPAD-Combinat}\footnote{\url{http://mupad-combinat.sf.net}}
for the Computer Algebra System \texttt{MuPAD}~\cite{MuPAD.96}. The
extra tools we developed at this occasion are available upon request,
and will eventually be integrated into \texttt{MuPAD-Combinat}, after
appropriate cleanup and documentation.

\section{Preliminaries}
\commentaire{Notation? recalls ? Background?}

The main topic of this paper is the study of the $q$-deformation of a
module. Hence we will work in two different settings: either $q:=q_0$
is a complex number, in which case the base field is $\K:=\C$, or $q$
is a formal complex number, and $\K:=\C(q)$. Many of the results are
true in both settings; in this case, we use $\K$ as base field without
precision.

For an alphabet $X$ of commuting variables, we denote respectively by
$\K[X]$, $\K(X)$, and $\sym(X)$ the ring of polynomials, the field of
fractions, and the ring of symmetric functions in the variables in
$X$. We denote by $X_n$ the alphabet of the $n$ commuting variables
$\{x_1,x_2,\dots,x_n\}$. Unless explicitly stated, $X:=X_\infty$ is
the alphabet in the countably many variables
$\{x_1,\dots,x_n,\dots\}$, and we use the shorthand
$\sym=\sym(X_\infty)$.

\subsection{Graded rings and Hilbert series}

In the following, most of the vector spaces we consider are graded,
with finite dimensional homogeneous components, and comparing their
dimensions will be one of our main tools. The \emph{Hilbert series} of
a graded vector space $A:=\bigoplus_{d=0}^\infty A_d$ is the
generating series of the dimensions of the homogeneous components of
$A$:
\begin{equation}
  \hilb(A) := \sum_{d=0}^\infty \dim A_d t^d.
\end{equation}
Given two formal series $H(t):=\sum_d h_d t^d$ and $L(t):=\sum_d l_d
t^d$, we say that $H(t)$ is \emph{dominated} by $L(t)$, and write
$H(t)\leq L(t)$, whenever $h_d\leq l_d$ for all $d$. The relation of
domination is preserved by multiplication by series with nonnegative
coefficients, such as Hilbert series and generating functions.

Let us recall some basic Hilbert Series.
\begin{prop}
  The Hilbert series of $\K[X_n]$ is
  \begin{equation}
    \label{eq.HilbPoly}
    \hilb(\K[X_n]) = \frac{1}{(1-t)^n}\,.
  \end{equation}
  
  The Hilbert series of the algebra $\sym := \sym(X_\infty)$ of symmetric
  functions on a countably many variables is
  \begin{equation}
    \label{eq.HilbSym}
    \hilb(\sym) = \prod_{i>0} \frac{1}{1-t^i}\,.
  \end{equation}
  
  The Hilbert series of the algebra $\sym(X_n)$ of symmetric
  polynomials on $n$ variables is
  \begin{equation}
    \label{eq.HilbSymn}
    \hilb(\sym(X_n)) = \prod_{i=1}^{n} \frac{1}{1-t^i}\,.
  \end{equation}
\end{prop}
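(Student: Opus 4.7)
The plan is to handle the three formulas in order, each time realizing the graded algebra in question as (either literally or as a graded vector space) a tensor product of simpler graded algebras, and using the fact that the Hilbert series of a tensor product is the product of the Hilbert series of the factors.

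For \eqref{eq.HilbPoly}, I would observe that $\K[X_n] \isom \K[x_1] \tensor \cdots \tensor \K[x_n]$ as graded vector spaces, where $x_i$ has degree $1$. Since each factor $\K[x_i]$ has monomial basis $\{x_i^k : k\geq 0\}$ and hence Hilbert series $\sum_{k\geq 0} t^k = 1/(1-t)$, multiplying gives $1/(1-t)^n$.

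For \eqref{eq.HilbSym}, the cleanest route is to invoke the structure theorem that $\sym$ is freely generated as a commutative $\K$-algebra by the power sums $p_1, p_2, \ldots$, with $\deg(p_i) = i$. Hence $\sym \isom \K[p_1] \tensor \K[p_2] \tensor \cdots$ as graded vector spaces, and each $\K[p_i]$ has Hilbert series $1/(1-t^i)$. Alternatively, one can use the monomial symmetric function basis $\{m_\lambda\}$ indexed by partitions and recognize the right-hand side as Euler's generating function for partitions by size, $\prod_{i\geq 1} 1/(1-t^i) = \sum_\lambda t^{|\lambda|}$.

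For \eqref{eq.HilbSymn}, I would similarly apply the fundamental theorem of symmetric polynomials: $\sym(X_n)$ is the polynomial ring in the elementary symmetric polynomials $e_1, \ldots, e_n$ (or equivalently, by Newton's identities when the characteristic is zero, in the power sums $p_1,\ldots,p_n$), with $\deg(e_i) = i$. The graded vector space decomposition $\sym(X_n) \isom \K[e_1] \tensor \cdots \tensor \K[e_n]$ then yields the product $\prod_{i=1}^{n} 1/(1-t^i)$.

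There is no real obstacle: each identity reduces to a standard structure theorem plus the multiplicativity of Hilbert series under tensor product. The only point that might deserve a line of justification is that algebraic independence of the $e_i$ (or $p_i$) ensures the tensor product decomposition really is an isomorphism of graded vector spaces rather than just a surjection, but in characteristic zero over $\K = \C$ or $\K = \C(q)$ this is classical.
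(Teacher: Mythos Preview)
Your argument is correct and entirely standard. The paper does not actually supply a proof of this proposition: it is presented as a recall of basic Hilbert series, so there is nothing to compare against. Each of your three derivations (tensor decomposition of $\K[X_n]$, free generation of $\sym$ by the $p_i$'s or the partition count via the $m_\lambda$'s, and the fundamental theorem of symmetric polynomials for $\sym(X_n)$) is the textbook justification.
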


\subsection{Representations of the symmetric group on polynomials}

The representation theory of the symmetric groups is a well known
combinatorial subject. The goal of this subsection is to recall
several basic useful facts.  For more details, the reader can refer
to~\cite{Fulton_Harris.RT,Garsia_Haiman.OHGR,Sagan.SG} or, in French,
to~\cite{Krob.EC.1995}.

The irreducible representations $V_\lambda$ of the symmetric group
$\sg_n$ are naturally indexed by partitions
$\lambda:=(\lambda_1\geq\lambda_2\geq\dots\geq\lambda_k>0)$ of sum $n$
(denoted by $\lambda\partof n$).  There is a particular occurrence of
$V_\lambda$, called the Specht module, in the natural representation
of $\sg_n$ on polynomials. This module is spanned by the so-called
Specht polynomials. Recall that the partition $\lambda$ is depicted
(in the French way) as a diagram of boxes putting $\lambda_i$ boxes in
the $i$-th row, starting from the bottom. A filling of the boxes by
the numbers $(1,2\dots,n)$ is called a \emph{standard filling} $F$ of
shape $\lambda$. A standard filling which is increasing from left to
right along rows and from bottom to top along columns is called a
\emph{standard tableau}.

For example here is the diagram of the partition $\lambda:=(5,3,2)$
together with a standard filling and a standard tableau of shape
$\lambda$.
\[
\Tableau{ & \\ & & \\ & & & & \\}
\hskip1.5cm
\Tableau{8 & 2 \\ 9 & 1 & 5 \\ 10 & 4 & 3 & 7 & 6 \\}
\hskip1.5cm
\Tableau{6 & 9 \\ 3 & 5 & 7 \\ 1 & 2 & 4 & 8 & 10 \\}
\]
\medskip

Recall that the \emph{Vandermonde determinant} of a finite alphabet of
variables $\{y_1,\dots,y_k\}$ is defined by
\begin{equation}
  \label{eq.DefVandermonde}
  \Delta(y_1,\dots,y_k) :=
  \begin{vmatrix}
    1   & y_1 & y^2_1 & \dots  & y^{k-1}_1 \\
    1   & y_2 & y^2_2 & \dots  & y^{k-1}_2 \\
 \vdots & \vdots  &  \vdots   & \ddots & \vdots        \\
    1   & y_k & y^2_k & \dots  & y^{k-1}_k \\
  \end{vmatrix}
  = \prod_{1\leq i < j \leq k} (y_j - y_i).
\end{equation}
For each standard filling $F$, the \emph{Specht polynomial}
$\specht_F$ is defined as the product of the Vandermonde determinants
of the variables indexed by the columns of $F$.

For example the Specht polynomial associated with the previous filling
is
\begin{equation*}
  \begin{split}
    \specht_F &:=
    \Delta(x_{10},x_9,x_8) \cdot \Delta(x_4,x_1,x_2) \cdot
    \Delta(x_3,x_5) \cdot \Delta(x_7) \cdot\Delta(x_6) \\
    & =
    (x_9 - x_{10})(x_8 - x_{10})(x_8 - x_9) \cdot
    (x_1 - x_4) (x_2 - x_4) (x_2 - x_1) \cdot
    (x_5 - x_3) \cdot 1 \cdot 1\,.
  \end{split}
\end{equation*}
It is easy to see that, for $\sigma\in\sg_n$, and $F$ a filling,
$\sigma \specht_F = \specht_{\sigma F}$. Hence, the span of
$\{\specht_F\}$, where $F$ runs through the standard fillings of a
given shape $\lambda\partof n$ is a representation $V_\lambda$ of
$\sg_n$, which is called \emph{Specht module} indexed by $\lambda$.
\begin{prop}
  The Specht modules $(V_\lambda)_{\lambda\partof n}$ form a complete
  family of irreducible representations of $\sg_n$.
\end{prop}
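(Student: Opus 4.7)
The plan is a three-step strategy standard for this classical result: (i) show each $V_\lambda$ is irreducible, (ii) show $V_\lambda \not\simeq V_\mu$ whenever $\lambda\neq\mu$, and (iii) show that together they exhaust all irreducibles by a dimension count. The linear-algebraic input I would set up first is a basis: by an elementary straightening (or ``Garnir relation'') argument, I would show that as $T$ runs over standard tableaux of shape $\lambda$ (not arbitrary fillings), the polynomials $\specht_T$ span $V_\lambda$; then a leading-term argument in a convenient monomial order proves they are linearly independent. This gives $\dim V_\lambda = f^\lambda$, the number of standard tableaux.

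For irreducibility, I would work with the symmetric bilinear form on $V_\lambda$ coming from the natural $\sg_n$-invariant pairing on polynomials, or equivalently with a Young symmetrizer $c_\lambda = a_\lambda b_\lambda$ where $a_\lambda$ symmetrizes over a row stabilizer and $b_\lambda$ antisymmetrizes over a column stabilizer. One shows $c_\lambda^2 = \alpha_\lambda c_\lambda$ for a nonzero scalar, so $c_\lambda$ is essentially an idempotent; then $V_\lambda = \C[\sg_n] c_\lambda$ and irreducibility follows from the standard fact that for any $x\in\C[\sg_n]$ one has $c_\lambda x c_\lambda \in \C c_\lambda$, which in turn reduces to a combinatorial lemma: for two tableaux of the same shape $\lambda$ there is always either two entries in a common row of one and a common column of the other, or the two tableaux differ by a row permutation composed with a column permutation. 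This combinatorial lemma is the real technical kernel of the proof and is the step I expect to be the main obstacle.

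To show $V_\lambda \not\simeq V_\mu$ for $\lambda\neq\mu$, I would invoke the dominance order: one checks that $a_\lambda b_\mu = 0$ unless $\lambda \trianglerighteq \mu$ (or a variant with rows/columns swapped), so among pairs of distinct shapes only one direction can be nonzero, forcing the two modules to have non-isomorphic structure under $c_\lambda$ and $c_\mu$. Alternatively, one computes the trace of a well-chosen element (e.g.\ an element of the row stabilizer) and shows the resulting character values already separate the $V_\lambda$'s.

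Finally, completeness follows from counting. Using the elementary identity $\sum_{\lambda\partof n} (f^\lambda)^2 = n!$, which can be proved bijectively by the Robinson--Schensted correspondence between $\sg_n$ and pairs $(P,Q)$ of standard tableaux of the same shape, one gets
\begin{equation*}
  \sum_{\lambda\partof n} (\dim V_\lambda)^2 = \sum_{\lambda\partof n} (f^\lambda)^2 = n! = |\sg_n|.
\end{equation*}
Since for any finite group the sum of the squares of the dimensions of the irreducibles equals the order of the group, and since the $V_\lambda$ are pairwise non-isomorphic irreducibles, this identity forces the family $(V_\lambda)_{\lambda\partof n}$ to be complete.
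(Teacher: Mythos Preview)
Your outline is a correct and standard route to this classical theorem, but there is nothing in the paper to compare it against: the proposition is stated in the preliminaries as a recalled fact, with references to Fulton--Harris, Garsia--Haiman, and Sagan, and no proof is given. So your write-up is not a reconstruction of the paper's argument but rather a self-contained proof of a result the authors take as background.
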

The Specht polynomials indexed by fillings are not linearly
independent. A basis of the Specht module $V_\lambda$ is given by
$\{\specht_F\}$, where $F$ runs through the standard tableaux of shape
$\lambda$.

Note that the polynomials $\specht_F$ are homogeneous. Furthermore,
each Specht module is the lowest degree occurrence of the
corresponding irreducible representation; more precisely:
\begin{prop}
  Let $d$ be the degree of the Specht module $V_\lambda$.  The
  multiplicity of the irreducible representation $V_\lambda$ in
  $\K[X_n]$ in degree $d$ is $1$;  Any occurrence of a
  representation isomorphic to $V_\lambda$ in $\K[X_n]$ occurs in
  degree at least $d$.
\end{prop}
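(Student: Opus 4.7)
The plan is to reduce both claims to the coinvariant algebra $\harm$ via the Chevalley--Shephard--Todd decomposition, and then to invoke the classical formula for the graded multiplicity of irreducibles in $\harm$.

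First, I would use the graded $\sg_n$-module isomorphism $\K[X_n] \cong \sym(X_n) \tensor_\K \harm$, with $\sg_n$ acting trivially on the symmetric polynomial factor. Writing $m_\lambda(t) := \sum_k [\harm_k : V_\lambda]\, t^k$ for the graded multiplicity of $V_\lambda$ in the harmonics, this yields that the graded multiplicity series of $V_\lambda$ in $\K[X_n]$ equals $\hilb(\sym(X_n)) \cdot m_\lambda(t)$. Since $\hilb(\sym(X_n))$ has constant term $1$ and nonnegative coefficients, the minimum degree at which $V_\lambda$ appears in $\K[X_n]$, and its multiplicity at that degree, coincide with those in $\harm$. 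It therefore suffices to prove the two assertions with $\K[X_n]$ replaced by $\harm$.

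Second, I would check that the Specht polynomials $\specht_F$ of shape $\lambda$ are homogeneous harmonics of degree $d = \sum_j \binom{c_j}{2}$, where the $c_j$ are the column lengths of $\lambda$. Homogeneity and the degree are immediate from the factorization as a product of column Vandermondes. Harmonicity follows either from a direct computation of the action of the invariant differentials $\sum_i \partial_i^k$ for $k\geq 1$, or by observing that the image of $\specht_F$ in the coinvariants is nonzero (being a generator of the Specht module). This supplies an occurrence of $V_\lambda$ in $\harm_d$.

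Finally, to establish that $V_\lambda$ appears nowhere in $\harm_{d'}$ for $d'<d$ and with multiplicity exactly one in $\harm_d$, I would invoke the Lusztig--Stanley identity $m_\lambda(t) = \sum_{T \in \mathrm{SYT}(\lambda)} t^{\mathrm{ch}(T)}$, where $\mathrm{ch}$ denotes the charge statistic; the minimum of $\mathrm{ch}(T)$ over standard tableaux of shape $\lambda$ equals $d$ and is attained by a unique tableau (the row-superstandard one). The main obstacle is importing this nontrivial combinatorial identity. A more self-contained alternative argues via Young symmetrizers: applying the column antisymmetrizer associated to $\lambda$ to any copy of $V_\lambda$ inside $\K[X_n]_{d'}$ produces a nonzero polynomial that vanishes whenever two variables indexed by the same column coincide, hence is divisible by the product of the column Vandermondes, which has degree exactly $d$; this forces $d'\geq d$, and the same argument, restricted to degree $d$, identifies the antisymmetrized image with a one-dimensional space, giving multiplicity one.
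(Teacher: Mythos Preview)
The paper does not prove this proposition; it is recorded in the preliminaries as a classical fact, with a pointer to the standard references on the representation theory of $\sg_n$.  Your proposal is therefore supplying a proof that the paper simply omits, and it is essentially correct.

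Of your two routes, the Young symmetrizer / divisibility argument is the cleaner one, and it works directly in $\K[X_n]$ without the preliminary reduction to $\harm$.  If $W\subset\K[X_n]_{d'}$ is a copy of $V_\lambda$, the column antisymmetrizer $a_\lambda$ does not kill $W$ (the sign character of the column stabilizer $C_\lambda$ occurs in $V_\lambda$ with multiplicity exactly $K_{\lambda',\lambda'}=1$), and any nonzero vector in $a_\lambda W$ is $C_\lambda$-alternating, hence a polynomial multiple of the product $\Delta_\lambda$ of the column Vandermondes; since $\deg\Delta_\lambda=d$ this forces $d'\ge d$, and in degree $d$ the space $a_\lambda\K[X_n]_d=\K\Delta_\lambda$ is one-dimensional, bounding the multiplicity by~$1$.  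The Chevalley decomposition and the passage to $\harm$ are only needed for the Lusztig--Stanley approach, so you could streamline by dropping them if you keep the second argument.

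One small caution on the Lusztig--Stanley route: watch the conventions.  The graded multiplicity of $V_\lambda$ in $\harm$ is most often written as $\sum_T t^{\operatorname{maj}(T)}$ over standard tableaux of shape $\lambda$; the minimum value is indeed $n(\lambda)=\sum_j\binom{\lambda'_j}{2}$ and is achieved uniquely, but the minimizing tableau is not in general the row-superstandard one (already for $\lambda=(2,1)$ it is the column-superstandard tableau, and for $\lambda=(3,2)$ it is neither).  This does not affect the validity of the argument, only the parenthetical identification.
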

There is a construction for higher degree occurrences of
representations isomorphic to $V_\lambda$ by
\cite{Terasoma_Yamada.1993} using so-called \emph{higher Specht
  polynomials}.  They form an explicit basis for the quotient module
$\K[X_n]_{/\sym^+(X_n)\K[X_n]}$.

\medskip

Let $W$ a representation of $\sg_n$. Recall that the direct sum of all
representations isomorphic to some $V_\lambda$ is a sub-representation
called the \emph{isotypic component} indexed by $\lambda$ of $W$ and
denoted $W_\lambda$. Then $W$ decomposes canonically as
$W=\bigoplus_{\lambda\partof n} W_\lambda$. Moreover, in the group
ring $\K\sg_n$ of $\sg_n$, there are special elements $e_\lambda$
called \emph{central orthogonal idempotents} such that $e_\lambda W =
W_\lambda$ and $e_ie_j= \delta_{i,j}$.

Consider a non zero polynomial $f$ in $V_\lambda$ and a representation
$W$ isomorphic to $V_\lambda$. By Maschke's theorem, the set of the
images of $f$ by all $\sym_n$-morphisms from $V_\lambda$ to $W$ is a
line. Moreover, for two linearly independent polynomials $f_1$ and
$f_2$, these lines are in direct sum.
\begin{defn}
  Let $f\in V_\lambda$ be a non zero polynomial, and $W$ be a
  $\sg_n$-representation. The set of the images of $f$ by all
  $\sg_n$-morphisms from $V_\lambda$ to $W$ is called the \emph{Garnir
    component} of $W$ associated to $f$ denoted by $W_\lambda(f)$.
\end{defn}
$W_\lambda(f)$ has naturally a structure of vector space isomorphic to
$\hom(V_\lambda, W)$ of which it is a sort of concrete realization.
Then we know that
\begin{prop}
  Let $W$ be a representation of $\sg_n$. Then
  \begin{equation}
    \label{eq.decompGarnir}
    W = \bigoplus_{\lambda\partof n} W_\lambda
      = \bigoplus_{\lambda\partof n}
        \left(\bigoplus_{\text{Shape}(T)=\lambda} W_\lambda(\specht_T)\right)\,,
  \end{equation}
  where the last sum is on standard tableaux of shape $\lambda$.
\end{prop}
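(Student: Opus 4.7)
The first equality is exactly the canonical isotypic decomposition already recalled (Maschke plus the central orthogonal idempotents $e_\lambda$), so the plan is to focus on the second equality. Fix a partition $\lambda \partof n$ of multiplicity $m := \dim\hom_{\sg_n}(V_\lambda,W)$ in $W$, so that $W_\lambda \isom V_\lambda^{\oplus m}$ and $\dim W_\lambda = m \cdot \dim V_\lambda$. I would first argue that for any nonzero $f \in V_\lambda$, the evaluation map $\hom_{\sg_n}(V_\lambda,W) \to W$, $\varphi \mapsto \varphi(f)$ is injective: its kernel is a subrepresentation of the irreducible $V_\lambda$ containing the generator $f$, hence is zero. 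Thus for every standard tableau $T$ of shape $\lambda$, $\dim W_\lambda(\specht_T) = m$.

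Next I would establish that the sum $\sum_T W_\lambda(\specht_T)$ equals $W_\lambda$. The cleanest way is to use the canonical $\sg_n$-equivariant evaluation
\begin{equation*}
  \operatorname{ev} : \hom_{\sg_n}(V_\lambda,W) \tensor V_\lambda \longrightarrow W_\lambda,
  \qquad \varphi \tensor v \longmapsto \varphi(v),
\end{equation*}
which is an isomorphism onto $W_\lambda$ (both sides are isomorphic to $V_\lambda^{\oplus m}$ and $\operatorname{ev}$ is surjective by definition of the isotypic component). Since the standard tableau Specht polynomials $(\specht_T)_{\text{Shape}(T)=\lambda}$ form a basis of $V_\lambda$, applying $\operatorname{ev}$ to the tensor decomposition $V_\lambda = \bigoplus_T \K\specht_T$ yields exactly $W_\lambda = \sum_T W_\lambda(\specht_T)$.

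Finally I would check that this sum is direct. This follows from a dimension count: each summand has dimension $m$, there are $\dim V_\lambda$ standard tableaux of shape $\lambda$, and $W_\lambda$ has dimension $m \cdot \dim V_\lambda$. Summing over all $\lambda$ then gives~\eqref{eq.decompGarnir}.

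The argument is essentially bookkeeping once the right morphism $\operatorname{ev}$ is written down; the only step requiring mild care is the injectivity of $\varphi \mapsto \varphi(\specht_T)$, which is the abstract form of the remark preceding the definition that linearly independent $f_1,f_2$ produce lines in direct sum. The main conceptual point---and the only place irreducibility of $V_\lambda$ is used---is that once a morphism from $V_\lambda$ is pinned down on one nonzero vector, it is determined everywhere, so indexing the Garnir components by a basis of standard Specht polynomials recovers the whole of $\hom_{\sg_n}(V_\lambda,W)\tensor V_\lambda$ without redundancy.
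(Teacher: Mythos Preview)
Your argument is correct. The paper itself does not give a formal proof of this proposition; it is stated as a standard fact, with the essential ingredients already laid out in the discussion immediately preceding the definition of Garnir components (the line ``$W_\lambda(f)$ has naturally a structure of vector space isomorphic to $\hom(V_\lambda,W)$'' together with the remark that linearly independent $f_1,f_2$ produce lines in direct sum). Your proof via the evaluation isomorphism $\operatorname{ev}:\hom_{\sg_n}(V_\lambda,W)\tensor V_\lambda \to W_\lambda$ is precisely the clean formalization of that sketch, so you are aligned with the paper's intended reasoning.

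One stylistic remark: in your injectivity step, the phrase ``its kernel is a subrepresentation of the irreducible $V_\lambda$'' is slightly ambiguous---the kernel in question is $\ker\varphi\subset V_\lambda$, not the kernel of the evaluation map $\varphi\mapsto\varphi(f)$ itself. The argument is right, but spelling out ``if $\varphi(f)=0$ then $\ker\varphi$ is a nonzero submodule of $V_\lambda$, hence all of $V_\lambda$'' would remove any possible misreading. Also note that once $\operatorname{ev}$ is an isomorphism, directness is automatic from the tensor decomposition, so your dimension count is a (harmless) redundancy.
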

Note that this direct sum decomposition is dependent of the chosen
basis of $V_\lambda$ (here the Specht basis).

\subsection{The Weyl algebra}

The following brief introduction to the Weyl algebra is essentially
taken from~\cite{Wood.DOSA.1997}. For a variable $x_i$ define the
partial derivative
$$
\partial_i:=\frac{\partial}{\partial x_i}.
$$
Following standard practice, we write abbreviated expressions for
monomials\commentaire{$x^K$ or $X^K$?}:
$$
x^K := \prod x_i^{k_i}, \quad \p^L := \p_i^{l_i},
$$
where $K:=(k_1,\dots,k_n,\dots)$ and $L:=(l_1,\dots,l_n,\dots)$ are
exponent vectors. The \emph{degree} of $x^K$ is $\deg(x^K):=\sum_i
k_i$, and the \emph{order} of $\p^L$ is $\ord(x^L):=\sum_i l_i$.

\begin{private}
  Notations, ... should we include them ?
Weyl operators: $f,g,h$

Polynomials: $p,q,r$. Depending on the context, a polynomial $p$ is
seen as a Weyl operator or a pure polynomial.

$fg$, $pq$: product in the Weyl algebra / product of polynomials.

$f.p$: action of the differential operator $f$ on the polynom $p$.

$.$ is not associative, and takes precedence over multiplication:
$fg.pq=(fg).(pq)$.

Remark: if $f$ is in standard form, $f.1$ extracts the monomials of
$f$ that do not contain any partial derivatives. We call $f.1$ the
purely polynomial part of $f$.

\begin{rems}[Some trivial computation rules]
  \begin{align}
    p.q &= pq\,,\\
    p.1 &= p\,,\\
    (f)_1 &= (f.1)_1\,,\\
    f.g.p &= fg.p\,,\\
    (fp - f.p).1 &= 0\,,\\
    \frac{1}{\pi}x_i\p_i\pi &= \p_ix_i\,,\\
    (fg).1&=(f(g.1)).1\,.
  \end{align}
\end{rems}
(the last one can probably expressed in a better way).
\end{private}

Weyl monomials are monomials in the variables $x_i$ and the partial
derivatives $\p_i$. They act by multiplication and derivation on
$\K[X_n]$, and form an algebra under addition and composition. For
example,
\begin{gather*}
  \partial_1 . x_1^4 = 4 x_1^3\,, \qquad
  \partial_1 . x_2^2 = 0\,, \qquad
  \partial_1\partial_2.p = \partial_2\partial_1 . p\,,\\
  x_1x_2 . (1+x_2x_4) = x_2x_1 (1+x_2x_4) = x_1x_2 + 2x_1x_2^2x_4\,,\\
  \partial_1 x_1 . p = \partial_1 . (x_1p) = (\partial_1.x_1) p + x_1(\partial_1.p) = (1+x_1\partial_1) . p\,,
\end{gather*}
This is formulated more precisely as follows:
\begin{defn}
  The Weyl algebra $\weyl:=\weyl(X)$ is the associative algebra, with
  unit, generated by $x_i,\partial_i$, subject to the relations
  $$
  [x_i,x_j]=0, \qquad [\p_i, \p_j]=0, \qquad [x_i,\p_j]=\delta_{ij},
  $$
  where square brackets denotes the Lie product and $\delta_{ij}$
  the Kronecker symbol.
  
  The Weyl algebras $\weyl(X_n)$ are defined for each $n$ in a similar
  way by restricting to the finite set of variables $x_1,\dots,x_n$
  and the corresponding partial derivatives.
\end{defn}

By repeated differentiation, any element of the Weyl algebra can be
expressed as a linear combination of Weyl monomials where the
polynomial part is on the left and the derivatives on the right.
For example,
\begin{align}
  \p_1 x_1 &= x_1 \p_1 + 1 \\
  \p_1^k x_1 &= x_1\p_1^k + k \p_1^{k-1} \\
  \p_1\p_2 x_1x_2 &= x_1 x_2 \p_1 \p_2 + x_1 \p_1 + x_2 \p_2 + 1
\end{align}
We shall refer to such monomials as the \emph{standard monomials}. As
an Abelian group, $\weyl$ is freely generated by the standard
monomials $x^K\p^L$, as $K$ and $L$ range over exponent vectors.

An element $f$ in the Weyl algebra lies in \emph{filtration} $k$ if
the maximum order of a term of $f$ in standard form does not exceed
$k$. Then, $\weyl(X_n)$ and $\weyl$ are filtered algebras in the sense
that the composition product of two elements, in filtration $m$ and
$k$ respectively lies in filtration $m+k$. The Weyl algebras are
graded by assigning grading $\deg(x^K)-\ord(\p^L)$ to $x^K\p^L$.

Given a polynomial or a Weyl operator $p$, we denote by $\os(p)$ the
\emph{orbit sum} of $p$, that is the sum of all the polynomials in the
orbit of $m$ under the action of the symmetric group. Typically, if
$\lambda$ is a partition, then $m_\lambda:=\os(x^\lambda)$ is the
\emph{monomial symmetric function} indexed by $\lambda$. Note that we
have to be a little bit careful, since $\os(p)$ might be undefined for
an infinite alphabet $X$, as for example for $\os(x_1+x_2)$; however,
for a monomial or Weyl monomial $m$, $\os(m)$ is always defined. We
will consider differential operators such as
\begin{equation}
  \P[]k:=\sum_i x_i^k (1+x_i\p_i):=\os(x_1^k (1+x_1\p_1))
\end{equation}
which are obtained by such symmetrization over an infinite alphabet.
To be precise, such operators do not belong to the Weyl algebra,
because they involve infinite sums. They do not even necessarily act
properly on $\K[X]$; for example $\P[]k.1=x_1^k+\dots+x_n^k+\dots$
involves an infinite sum.  In general, an infinite sum makes sense as
an operator on $\K[X]$ providing that, for each $n$, all but a finite
number of the monomials $x^K\p^L$ annihilate $\K[X_n]$.

In practice, this is not an issue; such symmetric operators can be
manipulated in the same classical manner as, for example, symmetric
functions on an infinite number of variables. They restrict to
elements of the Weyl algebra $\weyl(X_n)$ for each $n$, and the
notions of order, filtration and grading carry over properly. By
extension, we also call them Weyl operators.

The \emph{wedge product} (or \emph{formal product}) of two
differential operators is defined on standard monomials of the Weyl
algebra by
$$
x^K\p^L \wedge x^M\p^N := x^{K+L} \p^{M+N}.
$$
In other words, the partial derivatives are made to commute with
all the variables, and $\weyl$ becomes polynomial in both sets of
variables $x_i,\p_i$ under the wedge product. In working out the
composition product of two standard forms in the Weyl algebra, the
wedge product is the term of top filtration. The wedge product is
extended to infinite sums by linearity.

\begin{private}

The isobars (Weyl operators degree $0$) are generated by the elements
$x^i\partial_i$. They form a maximal commutative subalgebra.
$1+qx_i\partial_i$ lies there, and its inverse $\sum
\frac{(-q)^k}{\prod_i=1^k (1+iq)}$ lies in the isobars.

The following isobar series returns the constant term of a polynomial:
$T:=\prod_i (\sum_{n=0}^\infty \frac{1}{n!} x_i^n\partial_i^n)$
(Taylor expansion). It cannot be expressed as a series in powers of
$x_i\partial_i$, which makes it difficult to factor as the
exponential of something.

The scalar product can be twisted by an isobar operator $f_0$:
$\scalar{f}{g}:=(f^*f_0g)_1$. To get again a scalar product, $f_0$
should be auto-dual ($f_0^*=f_0$), as well as definite positive. In
this case, the adjoint of an operator $f$ becomes $(f_0 f
f_0^{-1})^*$.
\end{private}

\subsection{Duality in the Weyl algebra and scalar product on polynomials}

\begin{defn}
  The \emph{dual} $f^*$ of a Weyl operator $f$ is defined by
  sesquilinearity on the standard monomials:
  $$
  (x^K \p^L)^*:= x^L \p^K.
  $$
\end{defn}
For example,
$$
(2x_1x_2^2\p_2^2\p_3^3 + 3i x_2\p_1^4)^* = 2x_2^2x_3^3 \p_1\p_2^2 - 3ix_1^4\p_2.
$$
Note that in the literature, the dual of a polynomial $p$ is often
written $p(\p)$ instead of $p^*$~\cite{Garsia_Haiman.OHGR}.

\begin{rem}
  The dual is an anti-morphism with respect to composition:
  \begin{equation}
    (fg)^*=g^*f^*
  \end{equation}
\end{rem}

Denote by $p(0)$ the constant term of a polynomial $p$. For two
polynomials $p$ and $q$, set
\begin{equation}
  \scalar p q := (\overline p^*.q)(0).
\end{equation}
It is easy to check that this defines a scalar product which makes the
monomials $\{x^K\}$ into an orthogonal basis. In fact, we have
\begin{equation}
  \scalar{x^K}{x^L} =
  \begin{cases}
    0  & \text{if $K\ne L$}\\
    K! & \text{if $K=L$}
  \end{cases},
\end{equation}
where $K!=k_1!k_2!\cdots k_n!$.

\begin{prop}
  Let $f$ be a Weyl operator, seen as a linear endomorphism of
  $\K[X]$. Then, the adjoint of $f$ with respect to the scalar product
  $\scalar{.}{.}$ is its dual Weyl operator $f^*$.
\end{prop}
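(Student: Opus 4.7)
The plan is to reduce to the case of standard Weyl monomials, using two pieces of structure: the explicit formula $\scalar{x^K}{x^L}=K!\,\delta_{K,L}$ for the scalar product on the monomial basis, and the anti-morphism property $(fg)^*=g^*f^*$ recorded in the previous remark. By the (sesqui)linearity of both sides of $\scalar{f.p}{q}=\scalar{p}{f^*.q}$, it suffices to check the identity when $f=x^A\p^B$ is a standard monomial, and $p=x^K$, $q=x^L$ are monomial polynomials.

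The conceptual step I would do first is to verify the two base cases $\scalar{\p_i.p}{q}=\scalar{p}{x_i.q}$ and $\scalar{x_i.p}{q}=\scalar{p}{\p_i.q}$. Each is immediate from the monomial case: for $p=x^K$ and $q=x^L$, both sides of the first identity evaluate to $K!\,\delta_{K-e_i,L}$ using the formula $\p_i.x^K=k_i x^{K-e_i}$ and the orthogonality of the monomial basis; the second is symmetric. Hence on the generators, the map $f\mapsto f^\dagger$ (adjoint with respect to $\scalar{.}{.}$) and the map $f\mapsto f^*$ coincide. Since both are anti-morphisms of the Weyl algebra — adjunction always is, and the dual was noted to be so — they agree on every standard monomial $x^A\p^B$, and the general case follows by linearity.

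If I instead wanted a self-contained direct verification, I would compute both sides in closed form on standard monomials: $f.p=\frac{K!}{(K-B)!}x^{K-B+A}$ (when $K\geq B$ componentwise, else $0$), so $\scalar{f.p}{q}=\frac{K!\,L!}{(K-B)!}$ precisely when $L=K-B+A$, while $f^*.q=x^B\p^A.x^L=\frac{L!}{(L-A)!}x^{L-A+B}$ gives $\scalar{p}{f^*.q}=\frac{K!\,L!}{(L-A)!}$ exactly when $K=L-A+B$. The two support conditions are the same equation $L-A=K-B$, and under this equation $(L-A)!=(K-B)!$, so the values agree.

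The only real subtlety — and the part I expect to need the most care — is the bookkeeping of complex conjugates: the $\overline{p}$ in the definition of $\scalar{.}{.}$ contributes one conjugation, and the sesquilinear extension of $*$ on scalars contributes another, and one needs to check that they arrange themselves so that $f^*$, rather than a linear (un-conjugated) transpose, is the correct adjoint. Once this is tracked correctly on a generator with a single scalar coefficient (e.g.\ $f=c\,\p_i$), the general complex case follows from the same linearity-plus-anti-morphism argument as above.
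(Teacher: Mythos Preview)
Your argument is correct, and both versions (generators-plus-anti-morphism, and the closed-form monomial computation) go through. However, the route is genuinely different from the paper's.

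The paper does not reduce to generators or compute on a monomial basis. Instead it proves the one-line lemma
\[
  (g^*.r)(0) = ((g.1)^*.r)(0)
\]
for any Weyl operator $g$ and polynomial $r$ (reason: $g-g.1$ is a pure differential operator, so its dual is a pure polynomial operator, hence produces no constant term when applied to $r$). With this in hand, the paper manipulates the scalar product globally:
\[
  \scalar{f.p}{q}
  = ((f.p)^*.q)(0)
  = ((fp.1)^*.q)(0)
  = ((fp)^*.q)(0)
  = (p^*f^*.q)(0)
  = \scalar{p}{f^*.q},
\]
using only that $f.p=(fp).1$ and that $*$ is an anti-morphism. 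So where you check the identity on $x_i$ and $\p_i$ and then propagate via two anti-morphisms, the paper isolates a single structural fact about constant terms and never touches the generators. Your approach is more elementary and more transparent about where each ingredient is used; the paper's is shorter and coordinate-free. Your caution about the conjugation bookkeeping is well placed --- the paper's chain of equalities silently suppresses the $\overline{\,\cdot\,}$, whereas you are explicit that the sesquilinearity of $*$ and the $\overline p$ in the definition must be tracked together.
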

\begin{proof}
  Note first that for a Weyl operator $f$ and a polynomial $p$, we
  have
  \begin{equation}
    (f^*.p)(0) = ((f.1)^*.p)(0)\,;
  \end{equation}
  indeed, $(f-f.1)$ is a pure differential operator, hence
  $(f-f.1)^*.p$ is a pure polynomial with no constant term. Then,
  \begin{multline}
    \scalar{f.p}{q} =
    ((f.p)^*.q)(0)=
    ((fp.1)^*.q)(0)=
    ((fp)^*.q)(0)= 
    (p^*f^*.q)(0) \\
    =(p^*.(f^*.q))(0)=\scalar{p}{f^*.q}\,,
  \end{multline}
  as desired. 
\end{proof}

\subsection{Hit and harmonic polynomials}
\label{subsection.HitsHarmonics}

In this subsection, we fix a set $S$ of homogeneous Weyl operators,
and denote by $S^+$ the operators in $S$ of positive degree.

\begin{defn}
  A polynomial is called \emph{$S$-hit} if there exists some
  polynomials $p_1,\dots,p_k$ and operators $f_1,\dots,f_k\in S^+$
  such that
  \begin{equation}
    p=f_1.p_1+\dots+f_k.p_k.
  \end{equation}
  A polynomial $p$ is called \emph{$S$-harmonic} if it satisfies the
  differential equations
  \begin{equation}
    f^*.p=0, \qquad \text{for all $f \in S^+$}.
  \end{equation}
  The sub-spaces of $S$-hit and $S$-harmonic polynomials are denoted
  respectively by $\hit$ and $\harm$.
\end{defn}

For example, if $S=\{x_1^2+\dots+x_n^2\}$, the harmonics in $\K[X_n]$
are the polynomials $p$ which satisfy
$$
\p_1^2 p + \dots + \p_n^2 p =0;
$$
hence the name \emph{harmonic}~\cite{Garsia_Haiman.OHGR}. If $S$
consists only of polynomials, as in the previous example, then the
space of $S$-hits is simply the ideal generated by $S^+$ in $\K[X_n]$.

\begin{rem}
  Consider an operator $f\in S^+$, any Weyl operator $g$, and a
  polynomial $p$. Then, $fg.p= f. (g.p)$ is hit. Dually, if $p$ is
  harmonic then $gf^*.p=g.(f^*.p)=g.0=0$: it is killed by any operator
  in the left ideal generated by $S^{+*}$ in the Weyl algebra. Hence,
  $S$ and the right ideal $S^+\weyl(X_n)$ generated by $S^+$ in the
  Weyl algebra have the same hit and harmonic polynomials.
  
  Furthermore, if $A^+$ is any subspace of the Weyl algebra such that
  $S^+\subset A^+ \subset S^+\weyl(X_n)$, then $\hit$ is the image
  $A.\K[X_n]$ of $A^+\tensor \K[X_n]$ by the bilinear evaluation map
  $(f,p)\mapsto f.p$.
\end{rem}

\begin{prop}
  A polynomial $p$ is $S$-harmonic if, and only if, it is orthogonal
  to any $S$-hit polynomial $q$. That is, the space of $S$-harmonic
  polynomials is the orthogonal complement of the space of $S$-hit
  polynomials.
\end{prop}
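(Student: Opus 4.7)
The plan is to deduce this directly from the adjointness identity just proved, $\scalar{f.p}{q}=\scalar{p}{f^*.q}$, combined with the non-degeneracy of $\scalar{.}{.}$ (which is immediate since the monomials $x^K$ form an orthogonal basis with $\scalar{x^K}{x^K}=K!\neq 0$). The involution on Weyl operators satisfies $(f^*)^*=f$ by its definition on standard monomials, so the adjointness identity can equivalently be read as $\scalar{p}{f.q}=\scalar{f^*.p}{q}$.

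For the forward direction, suppose $p\in\harm$, and let $q=\sum_{i=1}^k f_i.p_i$ be an arbitrary $S$-hit polynomial with $f_i\in S^+$ and $p_i\in\K[X]$. By the reformulated adjointness,
\begin{equation*}
  \scalar{p}{q}=\sum_{i=1}^k \scalar{p}{f_i.p_i}=\sum_{i=1}^k \scalar{f_i^*.p}{p_i}=0,
\end{equation*}
since $f_i^*.p=0$ for every $f_i\in S^+$ by the definition of harmonicity.

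For the reverse direction, suppose $p$ is orthogonal to every $S$-hit polynomial. Fix $f\in S^+$ and an arbitrary polynomial $r\in\K[X]$. Then $f.r$ is $S$-hit, so
\begin{equation*}
  0=\scalar{p}{f.r}=\scalar{f^*.p}{r}.
\end{equation*}
Since this vanishes for every $r$, the non-degeneracy of $\scalar{.}{.}$ forces $f^*.p=0$. As $f\in S^+$ was arbitrary, $p$ is $S$-harmonic. The second assertion of the proposition, that $\harm$ is the full orthogonal complement of $\hit$, is exactly the conjunction of the two implications just proved.

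There is no genuine obstacle here: the argument is essentially a one-line duality once the adjointness identity is in hand. The only points requiring a moment of care are the involutive nature of $f\mapsto f^*$ (to rewrite the adjoint relation in the form needed for the forward direction) and the non-degeneracy of the scalar product (needed to pass from $\scalar{f^*.p}{r}=0$ for all $r$ to $f^*.p=0$); both are evident from the explicit formulas. Sesquilinearity plays no role since we are only testing vanishing.
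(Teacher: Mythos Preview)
Your proof is correct and follows essentially the same route as the paper: both use the adjointness identity $\scalar{f^*.p}{q}=\scalar{p}{f.q}$ to reduce each direction to a one-line observation, with the reverse direction invoking non-degeneracy of the scalar product. Your version is slightly more explicit about the involution $(f^*)^*=f$ and the non-degeneracy, but the argument is the same.
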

\begin{proof}
  This is a simple variation on the fact that the image of a linear
  morphism is the orthogonal complement of the kernel of its adjoint.
  Let $p$ be a polynomial. By duality, for any polynomial $q$ and
  operator $f\in S^+$, we have
  $$
  \scalar{f^*.p}{q} = \scalar{p}{f.q}.
  $$
  Assume that $p$ is $S$-harmonic; then it is obviously orthogonal
  to any $f.q$ and by linearity to any $S$-hit. Reciprocally, assume
  that $p$ is orthogonal to any hit polynomial, and consider an
  operator $f\in S^+$; then $f^*.p=0$ because it is orthogonal to all
  polynomials.
\end{proof}

\begin{prop}
  \label{prop.harm.hilbert}
  Let $A$ be the graded algebra generated by $S^+$, acting on
  $\K[X_n]$. Then, $\harm$ generates $\K[X_n]$ as an $A$-module. In
  particular,
  \begin{equation}
    \hilb(A).\hilb(\harm) \geq \hilb(\K[X_n]),
  \end{equation}
  with equality if, and only if, $\K[X_n]$ is a free $A$-module.
\end{prop}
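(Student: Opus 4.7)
The plan is to prove, by induction on the degree, that $\K[X_n]=A.\harm$; the Hilbert series statement and the freeness criterion then drop out of inspecting the induced graded surjection $A\tensor\harm\twoheadrightarrow\K[X_n]$.

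First I would observe that $\hit$ coincides with $A^+\K[X_n]$: every $f.p$ with $f\in S^+$ obviously lies in $A^+\K[X_n]$, and conversely $(f_1\cdots f_k).p=f_1.(f_2\cdots f_k.p)$ is hit, so the two spaces agree. Combined with the previous proposition, this gives the graded vector space decomposition $\K[X_n]=\harm\oplus A^+\K[X_n]$, and in particular the canonical isomorphism $\K[X_n]/A^+\K[X_n]\isom\harm$.

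Now the induction. In degree~$0$, $\hit_0=0$ because $S^+$ has positive degree, so $\K[X_n]_0=\harm_0\subseteq A.\harm$. For $d>0$, any homogeneous $p\in\K[X_n]_d$ splits as $p=h+q$ with $h\in\harm_d$ and $q\in\hit_d$; writing $q=\sum_i f_i.p_i$ with the $f_i\in S^+$ homogeneous of positive degree and the $p_i$ homogeneous of strictly smaller degree, the induction hypothesis puts each $p_i$ in $A.\harm$. Since $f_ig\in A$ for every $g\in A$, we conclude that each $f_i.p_i$, hence $q$, hence $p$, lies in $A.\harm$.

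The resulting graded surjection $\mu\colon A\tensor\harm\twoheadrightarrow\K[X_n]$ immediately gives $\hilb(\K[X_n])\leq\hilb(A)\hilb(\harm)$ degree by degree. Equality is equivalent to $\mu$ being bijective, which says exactly that $\K[X_n]$ is $A$-free with $\harm$ as a basis. Conversely, if $\K[X_n]$ is free over $A$ with some graded basis $B$, then $\K[X_n]/A^+\K[X_n]\isom B$, and by the first step this is isomorphic to $\harm$; so $\hilb(B)=\hilb(\harm)$ and $\hilb(\K[X_n])=\hilb(A)\hilb(B)=\hilb(A)\hilb(\harm)$. There is no serious obstacle here: this is the standard Nakayama-style argument for graded modules over a connected graded algebra, the only mild subtlety being to handle both directions of the freeness equivalence via Hilbert series rather than by exhibiting a basis directly.
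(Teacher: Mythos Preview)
Your proof is correct and follows essentially the same route as the paper: induction on the degree, splitting each polynomial into its harmonic and hit parts, and then reading off the Hilbert-series inequality from the surjection $A\tensor\harm\twoheadrightarrow\K[X_n]$. You supply more detail than the paper does---in particular you spell out the identification $\hit=A^+\K[X_n]$, the orthogonal decomposition $\K[X_n]=\harm\oplus\hit$ in each degree, and both directions of the freeness equivalence via $\K[X_n]/A^+\K[X_n]\isom\harm$---whereas the paper leaves these as implicit.
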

\begin{proof}
  It is sufficient to verify, by induction on the degree, that any
  homogeneous polynomial can be written in the form $p=1.p_0 +
  f_1.p_1+\dots+f_k.p_k$ where the $p_i$'s are harmonic. Then,
  $\K[X_n]=A.\harm$ is a graded quotient of $A\tensor\harm$, which
  yields the desired inequality on the Hilbert series.
\end{proof}

\begin{prop}
  Assume that the operators in $S$ are symmetric. Then, any Specht
  polynomial $\specht_F$ is $S$-harmonic. Hence, the space of
  $S$-harmonics contains at least one copy of each irreducible
  representation of the symmetric group $\sg_n$.
\end{prop}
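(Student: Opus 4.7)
The plan is to show that for each standard filling $F$, the Specht polynomial $\specht_F$ is annihilated by every $f^*$ with $f\in S^+$, by exploiting symmetry together with the column-alternating character of $\specht_F$.

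First I would observe that the duality $f\mapsto f^*$ commutes with the $\sg_n$-action on Weyl operators: since $\sigma(x^K\p^L)=x^{\sigma(K)}\p^{\sigma(L)}$ and $(x^K\p^L)^* = x^L\p^K$, we get $(\sigma f)^*=\sigma(f^*)$ for every standard monomial, and hence for every $f$ by linearity. In particular, if $f$ is symmetric then so is $f^*$, so $f^*$ commutes with the $\sg_n$-action on $\K[X_n]$. Secondly, I would recall that $\deg(x^L\p^K)=\deg(x^L)-\ord(\p^K)=-\deg(x^K\p^L)$, so if $f\in S^+$ is homogeneous of (positive) degree $d$, then $f^*$ is of degree $-d$, and $f^*.\specht_F$ is homogeneous of degree $\deg(\specht_F)-d<\deg(\specht_F)$.

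Next, I would use the column-alternating property of $\specht_F$. By construction, $\specht_F$ is a product of Vandermonde determinants $\Delta(x_{i_1},\dots,x_{i_k})$ over the sets of indices $\{i_1,\dots,i_k\}$ appearing in each column of $F$, so $\specht_F$ is alternating under any permutation of the variables within a column. Because $f^*$ is $\sg_n$-equivariant, the polynomial $f^*.\specht_F$ shares the same column-alternating property. Now, a standard fact (divisibility by the Vandermonde in each set of variables in which a polynomial is alternating) yields that $f^*.\specht_F$ is divisible, column by column, by the corresponding Vandermonde determinants, hence by their product $\specht_F$. Thus $f^*.\specht_F = g\cdot \specht_F$ for some polynomial $g$; but since $\deg(f^*.\specht_F)<\deg(\specht_F)$, we are forced to have $g=0$, and therefore $f^*.\specht_F=0$.

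This shows that every $\specht_F$ lies in $\harm$. As $F$ runs over standard tableaux of shape $\lambda\partof n$, the span of the $\specht_F$ is (by the recalled results) an explicit copy of the irreducible $V_\lambda$, and so $\harm$ contains at least one copy of every irreducible representation of $\sg_n$. The delicate point in this plan is the divisibility/degree argument: one must be careful that $f^*$ lowers degree strictly (which requires $f\in S^+$, i.e. strictly positive degree) and that the column-alternating property transfers through $f^*$, for which the symmetry hypothesis on $S$ is essential; without it, $f^*$ would no longer commute with the $\sg_n$-action and the argument would collapse.
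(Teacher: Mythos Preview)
Your argument is correct, but it follows a different route from the paper's. The paper's proof is a two-line application of the proposition recalled just above in the preliminaries: since $f^*$ is an $\sg_n$-morphism of negative degree, $f^*.\specht_F$ lies in the $\lambda$-isotypic component of $\K[X_n]$ in degree strictly smaller than $\deg(\specht_F)$; but the Specht module sits in the \emph{minimal} degree where $V_\lambda$ occurs, so $f^*.\specht_F=0$. Your proof instead bypasses that black box by working directly with the explicit form of $\specht_F$ as a product of column Vandermondes: you transfer the column-alternating property through the $\sg_n$-equivariant operator $f^*$, deduce divisibility of $f^*.\specht_F$ by each column Vandermonde (hence by their product, the sets of variables being disjoint), and conclude by the degree drop. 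In effect your argument re-derives, for this particular application, the minimality statement the paper invokes. The paper's route is shorter and more conceptual once the minimality proposition is available; yours is more self-contained and elementary, relying only on the concrete shape of Specht polynomials and the standard Vandermonde divisibility fact.
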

\begin{proof}
  Consider an operator $f\in S^+$. Its dual $f^*$ is an
  $\sg_n$-morphism of negative degree. Hence, $f^*.\specht_F$ lies in
  the same isotypic component as $\specht_F$, and
  $\deg(f^*.\specht_F)<\deg(\specht_F)$. By minimality of the degree
  of $\specht_F$, $f^*.\specht_F=0$.
\end{proof}

As a final example, take for $S$ the $n$ symmetric power-sums on $n$
variables $(p_k:=x_1^k+\dots+x_n^k)_{k=1,\dots,n}$.  Then, the space
$\hit$ of hit polynomials is the ideal generated by the symmetric
polynomials. There exists an explicit Gröbner basis for the
lexicographic term order, which shows that the leading terms of $\hit$
are the non-staircase monomials (see e.g.~\cite{Sturmfels.AIT}).

The harmonics are the polynomials $p$ which satisfy the generalized
harmonic equations
\begin{equation}
  p_k^* .p = \p_1^k.p + \dots + \p_n^k.p=0, \qquad \text{for $k=1,\dots,n$}.
\end{equation}
The Vandermonde $\Delta:=\prod(x_j-x_i)$ is the Specht polynomial
$\specht_{(1,\dots,1)}$, and as such is harmonic. Moreover, any
partial derivative $\p_i$ commute with the $p_k^*$, and it follows
that it stabilizes the space $\harm$ of harmonic polynomials.
Actually, any harmonic is of the form $p^*.\Delta$, for some
polynomial $p$, and if a set $B$ of $n!$ polynomials is a basis of
$\K[X_n]/\sym.\K[X_n]$, then $(p^*.\Delta)_{p\in B}$ is a basis of
$\harm$. Finally, $\K[X_n]$ is a free $\sym(X_n)$-module, and the
Hilbert series of $\harm(X_n)$ is given by
\begin{equation}
  \begin{split}
    \hilb(\harm(X_n)) &= \frac{\hilb(\K[X_n])}{\hilb(\sym(X_n))}\\
    &=
    \prod_{i=1}^{n}\frac{1-t^i}{1-t} =
    (1+t) (1+t+t^2) \cdots(1+t+\dots+t^{n-1}).
  \end{split}
\end{equation}

For further details, we refer to~\cite{Garsia_Haiman.OHGR}.

\commentaire{More details ?}

\subsection{Specialization lemmas}

In this subsection, $U$ is a $\C$-vector space with a distinguished
basis $B$. The goal here is to deal with $q$-deformations of finite
dimensional sub-spaces of $\C(q) \tensor U$ and their specializations.

\begin{lem}
  \label{lem.RankOfSpecialization}
  Let $(u_1(q),\dots,u_k(q))$ be a family of vectors of $\C(q) \tensor
  U$. For all but a finite set of values $q_0\in \C$, namely the roots
  of the denominators of the coefficients of the $u_i$'s on the basis
  $B$, the specialization $q=q_0$ is well defined. In this case,
  \begin{equation}
    \label{eq.RankOfSpecialization}
    \rank_\C     (u_1(q_0),\dots,u_k(q_0))
       \leq
    \rank_{\C(q)} (u_1(q),\dots,u_k(q)) \,.
  \end{equation}
  Moreover, there is only a finite number of values $q_0\in C$ such
  that the inequality above is strict, namely the roots of the
  non-trivial principal minors.
\end{lem}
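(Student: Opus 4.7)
The plan is to reduce the statement to pure linear algebra over $\C(q)$ by passing to a matrix representation. Since each $u_i(q)\in\C(q)\tensor U$ involves only finitely many basis vectors of $B$, pick a finite subset $\{b_1,\dots,b_m\}\subset B$ containing the supports of all the $u_i$'s, and form the $k\times m$ matrix $M(q)$ whose $(i,j)$ entry is the coefficient of $b_j$ in $u_i(q)$. Writing each entry as $p_{ij}(q)/d_{ij}(q)$ with $p_{ij},d_{ij}\in\C[q]$ coprime, the specialization $q=q_0$ is well-defined precisely when $d_{ij}(q_0)\neq 0$ for all $i,j$; this accounts for the finite exceptional set in the first part of the statement. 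By construction, $\rank_{\C(q)}(u_1(q),\dots,u_k(q))=\rank_{\C(q)}M(q)$, and likewise $\rank_\C(u_1(q_0),\dots,u_k(q_0))=\rank_\C M(q_0)$.

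To prove inequality~\eqref{eq.RankOfSpecialization}, set $r:=\rank_{\C(q)}M(q)$. A standard characterization of rank over the field $\C(q)$ says that $r$ equals the largest integer such that some $r\times r$ minor of $M(q)$ is a non-zero element of $\C(q)$; equivalently, every $(r+1)\times(r+1)$ minor of $M(q)$ vanishes identically as a rational function of $q$. Since specialization commutes with forming minors (as long as no denominator vanishes at $q_0$), every $(r+1)\times(r+1)$ minor of $M(q_0)$ also vanishes, and hence
\begin{equation*}
  \rank_\C M(q_0)\;\leq\;r\;=\;\rank_{\C(q)}M(q)\,.
\end{equation*}

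For the second assertion, observe that the reverse inequality $\rank_\C M(q_0)\geq r$ holds as soon as at least one $r\times r$ minor of $M(q)$ is non-zero when evaluated at $q_0$. Let $\mu_1(q),\dots,\mu_N(q)$ be the list of $r\times r$ minors of $M(q)$ that are not identically zero as rational functions — these are the ``non-trivial'' (maximal non-vanishing) minors referred to in the statement. Each $\mu_s(q)$ is a non-zero rational function, so has only finitely many zeros in $\C$. The rank strictly drops at $q_0$ if and only if $\mu_s(q_0)=0$ for every $s$, a condition satisfied only on the common zero set of the $\mu_s$, which is contained in the finite union of their individual zero sets.

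The main obstacle here is not technical but interpretive: the phrase ``non-trivial principal minors'' is slightly non-standard (``principal'' usually refers to diagonally placed submatrices), and one must read it as ``maximal $r\times r$ minors that are not identically zero in $\C(q)$'' for the statement to be correct. Once this reading is adopted, everything reduces to the classical fact that rank is a lower semi-continuous function on the Zariski topology, applied to a carefully chosen finite submatrix of coordinates.
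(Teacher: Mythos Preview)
The paper states this lemma without proof, presumably regarding it as a classical fact from linear algebra over function fields. Your argument is a correct and standard one: pass to a finite coordinate matrix, use that rank is the largest size of a non-vanishing minor, and note that vanishing of minors is preserved under specialization while non-vanishing can only fail on the zero locus of finitely many non-zero rational functions.

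Your closing remark about terminology is well taken: the phrase ``non-trivial principal minors'' in the paper is indeed idiosyncratic, and your reading (the $r\times r$ minors that are not identically zero, where $r$ is the generic rank) is the only one that makes the statement true. One could sharpen your description of the exceptional set slightly: the rank drops at $q_0$ exactly when \emph{all} such minors vanish there, so the bad set is the common zero locus of these minors (equivalently, the zero set of their gcd in $\C[q]$ after clearing denominators), not merely contained in the union of their individual zero sets. This is a cosmetic point and does not affect the finiteness conclusion.
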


Let $V(q)$ be a subspace of $\C(q) \tensor U$ of finite dimension. For
any $q_0\in \C$, let $V(q_0)$ be its specialization, that is the
vector space of all meaningful specializations of elements of $V(q)$.
Note that this definition is highly dependent on the basis $B$.

\begin{lem}
  \label{lem.Specialization}
  There exists a basis $B'(q)=(v_1(q),\dots,v_k(q))$ of $V(q)$ such that the
  coefficients of the elements of $B'(q)$ in the basis $B$ are polynomials in
  $q$, and for any specialization $q=q_0$, the family $B'(q_0)$ is a basis
  of $V(q_0)$ (in particular $\dim_\C V(q_0)=\dim_{\C(q)} V(q)$).
\end{lem}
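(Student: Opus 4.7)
My plan is to exhibit the desired basis as a $\C[q]$-basis of the submodule of ``polynomial'' elements of $V(q)$. Since $V(q)$ is finite dimensional over $\C(q)$, only finitely many vectors of $B$ can appear with non-zero coefficient in any element of $V(q)$, so I would first replace $U$ by their finite-dimensional span and reduce to the case where $U$ is finite dimensional. I then set
\[
  M := V(q) \cap \bigl(\C[q] \otimes U\bigr),
\]
the $\C[q]$-submodule of elements of $V(q)$ whose $B$-coefficients are polynomial in $q$. Since $M$ is a submodule of the finitely generated free module $\C[q] \otimes U$ over the PID $\C[q]$, the structure theorem guarantees that $M$ is itself free and finitely generated. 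Clearing denominators in any $\C(q)$-basis of $V(q)$ produces $k := \dim_{\C(q)} V(q)$ independent elements of $M$, so $M$ has rank exactly $k$, and I pick any $\C[q]$-basis $B'(q) = (v_1(q),\ldots,v_k(q))$ of $M$; its vectors have polynomial coefficients in $B$ by construction.

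It remains to check that $B'(q_0) = (v_1(q_0),\ldots,v_k(q_0))$ is a $\C$-basis of $V(q_0)$ for every $q_0\in\C$. For linear independence, I would argue that if $\sum a_i v_i(q_0) = 0$ with $a_i\in\C$, then $u(q) := \sum a_i v_i(q) \in M$ has every $B$-coefficient a polynomial vanishing at $q_0$, hence $u(q) = (q-q_0)\,w(q)$ with $w(q)\in\C[q]\otimes U$; since $w = u/(q-q_0) \in V(q)$ still has polynomial coefficients, $w\in M$, so writing $w = \sum b_i(q)\,v_i(q)$ with $b_i\in\C[q]$ and comparing with the expansion $u = \sum a_i v_i$ in $M$, uniqueness gives $a_i = (q-q_0)\,b_i(q)$, which forces $a_i = 0$ by a degree count. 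For spanning, given $u(q_0)\in V(q_0)$ coming from $u(q)\in V(q)$ regular at $q_0$, I write $u(q) = \sum c_i(q)\,v_i(q)$ with $c_i\in\C(q)$ and assume for contradiction that some $c_i$ has a pole at $q_0$ of maximum order $e\geq 1$; setting $\tilde c_i := (q-q_0)^e c_i$, each $\tilde c_i$ is regular at $q_0$ and at least one $\tilde c_j(q_0)$ is non-zero. Then $\sum \tilde c_i(q)\,v_i(q) = (q-q_0)^e u(q)$ specializes at $q_0$ to $0$, giving the $\C$-linear relation $\sum \tilde c_i(q_0)\,v_i(q_0) = 0$, which contradicts the independence just established. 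Hence every $c_i$ is regular at $q_0$ and $u(q_0) = \sum c_i(q_0)\,v_i(q_0)$ lies in the span of the $v_i(q_0)$.

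The only genuine idea in this plan is to single out the submodule $M$ of polynomial elements rather than working with an arbitrary $\C(q)$-basis of $V(q)$; once $M$ is identified, its freeness over the PID $\C[q]$ hands us the basis for free, and both verifications above reduce to routine divisibility arguments in $\C[q]$. I do not anticipate any substantial obstacle, provided one is careful to pass cleanly between the global $\C(q)$-decomposition $u = \sum c_i v_i$ and the local behavior at each $q_0$, which the maximum-pole-order trick handles.
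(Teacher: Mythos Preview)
Your argument is correct and takes a genuinely different route from the paper. The paper proceeds constructively: starting from any $\C(q)$-basis of $V(q)$ with denominators cleared, it applies a fraction-free Gram--Schmidt process (with respect to the bilinear form making $B$ orthogonal) and then divides each resulting vector by the content of its coefficient vector. Orthogonality survives any specialization, and content-freeness guarantees that no vector specializes to zero, so linear independence at every $q_0$ is immediate; spanning then follows from the preceding rank lemma. Your approach instead identifies the $\C[q]$-lattice $M = V(q)\cap(\C[q]\otimes U)$ and invokes the structure theorem over the PID $\C[q]$ to produce a basis, then checks independence and spanning at each $q_0$ by elementary divisibility and pole-order arguments. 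The paper's method is more explicitly algorithmic (and indeed the authors advertise it as ``effective''), which matches their later computational needs; your method is cleaner conceptually, avoids choosing an inner product, and makes transparent why \emph{any} $\C[q]$-basis of the lattice $M$ works, not just an orthogonal one.
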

This is a classical result, but let us give a short effective proof. 
\begin{proof}
  First remark that, by the preceding lemma, the vector space $V(q_0)$
  of all meaningful specializations of elements of $V(q)$ is of
  $\C$-dimension at most the $\C(q)$-dimension of $V(q)$. Consequently
  we just have to construct a basis of $V(q)$ without denominators
  such that all specializations are linearly independent. Let us
  construct such a basis.
  
  Choose a $\C(q)$-basis $A(q)$ of $V(q)$ and expand the vectors of
  $A(q)$ over $B$. By a suitable multiplication, we can remove the
  denominators and then suppose that the coefficients of the expansion
  are in $\C[q]$. Now we apply the fraction-free Gram-Schmidt
  orthogonalization to $A(q)$, and get a basis $A'(q) = (a_1(q),
  \dots, a_n(q))$. By dividing each $a_i(q)$ by the greatest common
  divisor of its coefficients, (it is often called the \emph{content}
  of $a_i(q)$) we get a new basis $B'(q)=(b_i(q))$ of $V(q)$.
  
  This basis is orthogonal on $\C[q]$ and thus remains orthogonal for
  any specialization at $q=q_0$. Furthermore, an element $b_i(q)$ of
  $B'(q)$ is content-free, so $b_i(q_0)$ never vanishes. We conclude
  that, for any $q_0$, the family $B'(q_0)$ is a $\C$-basis for
  $V(q_0)$.
\end{proof}

Moreover, since the character table of $\sg_n$ is with integer
coefficients, the action of central idempotents commutes with
specializations. Thus if $V(q)$ is an $\sg_n$-module then
\begin{itemize}
\item $V(q)$ is isomorphic as $\sg_n$-modules to $\C(q) \tensor V(q_0)$ for
  all $q_0$;
\item $\C(q_0)$ and $\C(q_1)$ are isomorphic as $\sg_n$-modules for all $q_0,
  q_1$.
\end{itemize}

\section{The $q$-Steenrod algebra}

The similarities between $\sym$ and $\steen$ leads us to define a new
algebra to interpolate between the two.

\begin{defn}
  \label{defn.qsteen}
  Let $\P k$ be the following $q$-analogue of the $\P[]k$ operator:
  \begin{equation*}
    \P k:=\sum_i x_i^k (1+qx_i\p_i).
  \end{equation*}
  The \emph{$q$-Steenrod algebra} $\qsteen$ is the algebra over
  $\C(q)$ generated by the $\P k, k>0$.
\end{defn}

Note that the specializations at $q=0$ and $q=1$ yield respectively
the symmetric functions and the rational Steenrod algebra (conjugated
by the Thom map):
\begin{itemize}
\item $\P[0]k$ is the usual operator of multiplication by the
  symmetric power-sum $p_k$.
\item $\P[1]k=\P[]k$.
\end{itemize}

\begin{rem}
  It would be tempting to actually introduce two parameters $q$ and
  $q'$:
  \begin{equation}
    \P[q,q']k:=\sum_i x_i^k (q'+qx_i\p_i).
  \end{equation}
  Now, the rational Steenrod algebra itself is also a special case of
  the $(q,q')$-Steenrod algebra, with $q'=0$ and $q=1$: working in
  projective space versus $q$ removes the singularity at $q=\infty$.
  The price to pay is that $\C[q,q']$ is not anymore a principal ideal
  domain; this renders computations, and especially linear algebra,
  far less practical. Since we are mainly interested in the cases
  $q=0$ and $q=1$, we stick in the following to a single parameter.
\end{rem}

\subsection{Structure and dimension of the $q$-Steenrod algebra}

The $q$-Steenrod squares satisfy the following commutation rule:
\begin{prop} \label{eq.commut}
  For $k, l\in \N$, and $q$ complex or formal,  
  \begin{equation}
    [\P k,\P l]=q(l-k)\P{k+l}\,.
  \end{equation}
\end{prop}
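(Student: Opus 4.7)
The plan is to split $\P k$ into its commutative and derivative parts, compute four pairwise commutators, and recombine. Write
\begin{equation*}
  \P k = A_k + q\, B_k, \qquad A_k := \sum_i x_i^k, \qquad B_k := \sum_i x_i^{k+1}\p_i.
\end{equation*}
Note $A_k$ is just multiplication by the symmetric power-sum $p_k$, while $B_k$ is a derivation of $\K[X]$ (a polynomial vector field). The operator $B_k$ is essentially the rational Steenrod square $D_k$ from the introduction, which already satisfies the Witt-type relation $[D_k,D_l]=(l-k)D_{k+l}$; I will re-derive this en passant.

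Then by bilinearity of the bracket,
\begin{equation*}
  [\P k,\P l] = [A_k,A_l] + q\bigl([A_k,B_l]+[B_k,A_l]\bigr) + q^2\,[B_k,B_l].
\end{equation*}
I will handle these four pieces in turn. First, $[A_k,A_l]=0$ since both are multiplication operators on a commutative polynomial ring. Second, using the Weyl relation $[\p_j,x_i^k]=\delta_{ij}\,k\,x_i^{k-1}$, a direct computation gives
\begin{equation*}
  [A_k,B_l] = \sum_{i,j} x_j^{l+1}[x_i^k,\p_j] = -k\sum_i x_i^{k+l} = -k\,A_{k+l},
\end{equation*}
and symmetrically $[B_k,A_l] = l\,A_{k+l}$, so the degree-$q$ part contributes $q(l-k)A_{k+l}$. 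Third, using the standard commutator formula for two vector fields $f\p_i$ and $g\p_j$,
\begin{equation*}
  [x_i^{k+1}\p_i,\,x_j^{l+1}\p_j] = x_i^{k+1}\p_i(x_j^{l+1})\,\p_j - x_j^{l+1}\p_j(x_i^{k+1})\,\p_i,
\end{equation*}
the summation collapses to the diagonal $i=j$ and yields
\begin{equation*}
  [B_k,B_l] = \sum_i \bigl((l+1)-(k+1)\bigr)x_i^{k+l+1}\p_i = (l-k)\,B_{k+l}.
\end{equation*}
Collecting, the right-hand side of the commutator is $q(l-k)A_{k+l}+q^2(l-k)B_{k+l} = q(l-k)(A_{k+l}+qB_{k+l}) = q(l-k)\P{k+l}$, as desired.

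There is no serious obstacle: the whole argument is a bookkeeping exercise in the Weyl algebra, and the infinite sums cause no trouble because on any polynomial only finitely many indices $i$ act non-trivially, so the formal manipulations above are justified term by term in the sense explained in the ``Weyl algebra'' subsection. The only mild subtlety is to make sure the cross terms $[A_k,B_l]+[B_k,A_l]$ produce the factor $l-k$ with the correct sign, which is why I carry out both computations rather than appeal to antisymmetry after the fact.
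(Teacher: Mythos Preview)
Your proof is correct. The paper does not actually give a proof of this proposition: it is stated without argument, with the Witt relation $[D_k,D_l]=(l-k)D_{k+l}$ for the rational Steenrod squares already recalled in the introduction as a known fact. Your direct computation via the decomposition $\P k = A_k + qB_k$ is exactly the natural way to fill this in, and everything checks.
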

For a composition $\mu:=(\mu_1,\dots,\mu_k)$, define $\P\mu :=
\P{\mu_1}\cdots \P{\mu_k}$.  Using the commutation rules, it is
obvious that any $\P\mu$ can be rewritten as a linear combination of
$\P\lambda$'s indexed by partitions. For example,
$$
\P{(1,2)} = q \P{(3)} + \P{(2,1)}\,.
$$
\begin{thm}\label{thm.structSteen}
  Let $q$ be formal or a complex number.
  \begin{itemize}
  \item[(a)] The operators $\P\lambda$, where $\lambda$ runs
    through all integer partitions form a vector space basis of
    $\qsteen$;
  \item[(b)] The Hilbert Series of the $q$-Steenrod algebra is given by
    \begin{equation}
      \label{eq.HilbSteen}
      \hilb(\qsteen) = \hilb(\sym) = \prod_{i>0} \frac{1}{1-t^i}\,;
    \end{equation}
  \item[(c)] The ideal of relations between the $\P k$'s is generated
    by the commutation relations
  \begin{equation}
    [\P k,\P l] - q(l-k)\P{k+l}\,.
  \end{equation}    
  \item[(d)] $\qsteen$ is the enveloping algebra of the Lie algebra spanned
  by the $\P k$.
  \end{itemize}
\end{thm}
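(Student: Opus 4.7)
The plan is to identify $\qsteen$ with the universal enveloping algebra $U(\mathfrak{g})$ of the Lie algebra $\mathfrak{g}$ with basis $\{L_k\}_{k\geq 1}$ and bracket $[L_k,L_l]:=q(l-k)L_{k+l}$ (Jacobi is a routine direct check). By Proposition~\ref{eq.commut}, the assignment $L_k\mapsto \P k$ extends to a graded surjective algebra morphism $\phi\colon U(\mathfrak{g})\to \qsteen$. Once $\phi$ is shown to be an isomorphism, all four parts follow at once: the Poincar\'e--Birkhoff--Witt theorem provides the ordered basis $\{L_\lambda\}_\lambda$ of $U(\mathfrak{g})$, giving (a); part (b) is then the generating function counting partitions by weight; (d) is immediate; and (c) is the statement $\ker\phi=0$ translated through the standard presentation of $U(\mathfrak{g})$, since the kernel of the projection of the tensor algebra on the $L_k$'s onto $U(\mathfrak{g})$ is exactly the two-sided ideal generated by the Lie relations.

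The crux is therefore the injectivity of $\phi$, equivalently the linear independence of the operators $\{\P\lambda\}_\lambda$ on $\K[X]$. I will use the order filtration of the Weyl algebra. Splitting $\P k = p_k + qD_k$ with $D_k:=\sum_i x_i^{k+1}\p_i$ and expanding the composition $\P\lambda=\P{\lambda_1}\cdots \P{\lambda_\ell}$ shows that $\P\lambda$ has order at most $\ell:=\ell(\lambda)$, and that its top-order symbol in the commutative associated graded algebra is the wedge product
\begin{equation*}
  q^{\ell}\, D_{\lambda_1}\wedge\cdots\wedge D_{\lambda_\ell}
  \;=\; q^{\ell}\sum_{i_1,\dots,i_\ell}\prod_{j=1}^{\ell} x_{i_j}^{\lambda_j+1}\p_{i_j}\,.
\end{equation*}
Given a homogeneous relation $\sum_\lambda c_\lambda\P\lambda=0$, restrict to the $\lambda$'s of a common weight and let $M$ be the maximum length among those with $c_\lambda\ne 0$. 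Extracting the order-$M$ part of the relation yields, for $q\ne 0$, $\sum_{\ell(\lambda)=M} c_\lambda\, D_{\lambda_1}\wedge\cdots\wedge D_{\lambda_M}=0$ in $\K[x_i,\p_j]$. Reading off the coefficient of the distinguished monomial $x_1^{\lambda_1+1}\cdots x_M^{\lambda_M+1}\p_1\cdots\p_M$ singles out the partition $\lambda$ itself (any other partition of length $M$ contributing would have to be a rearrangement of $\lambda$, hence equal to it since both are weakly decreasing), forcing $c_\lambda=0$ for all $\lambda$ with $\ell(\lambda)=M$. A descending induction on $M$ kills every $c_\lambda$.

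The degenerate case $q=0$ is classical: $\P[0]k$ is just multiplication by the power sum $p_k$, so $\qsteen[0]=\sym$, in which the $p_\lambda$'s form a well-known basis. The main technical obstacle of the plan is making the top-order extraction genuinely separate partitions; the trick is to work with a monomial built from $M$ \emph{distinct} indices $1,\dots,M$, which restricts the contributing tuples $(i_1,\dots,i_M)$ to permutations of $(1,\dots,M)$ and lets the comparison of exponents isolate a single partition, sidestepping the bookkeeping caused by repeated parts.
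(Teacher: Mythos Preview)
Your proof is correct. The paper reaches the linear independence of the $\P\lambda$'s by a closely related but distinct triangularity: instead of passing to the top-order symbol in the Weyl filtration, it evaluates the \emph{polynomial} part $\P\lambda.1\in\sym$ and records (Lemma~\ref{lemma.triangularity}) that it equals $c\,m_\lambda+\sum_{\ell(\mu)<\ell(\lambda)} c_\mu m_\mu$ with $c\ne 0$; linear independence of the monomial symmetric functions $m_\lambda$ then gives the result by the same descending induction on length. So both arguments project $\P\lambda$ to a slice of the order filtration and compare with a known basis---you take the order-$\ell(\lambda)$ slice (the wedge product $q^{\ell}D_{\lambda_1}\wedge\cdots\wedge D_{\lambda_\ell}$), while the paper takes the order-$0$ slice (the action on~$1$). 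The paper's choice has the small advantage of being uniform in $q$: the leading coefficient in front of $m_\lambda$ is independent of $q$, so no separate treatment of $q=0$ is required. Your route, in turn, makes the enveloping-algebra viewpoint for parts~(c) and~(d) explicit via PBW, and your device of reading off the coefficient of $x_1^{\lambda_1+1}\cdots x_M^{\lambda_M+1}\p_1\cdots\p_M$ on $M$ distinct indices is a clean way to isolate a single partition in the top symbol.
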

This is a straightforward generalization of results
in~\cite{Wood.DOSA.1997} about the rational Steenrod algebra, and
follows from an easy triangularity property with respect to monomial
functions. Namely, consider the expansion of
\begin{equation*}
\begin{split}
  \P{(2,1)} &=  \os(x_1^2 x_2) + (q + 1)\os(x_1^3)\\
  &+ q^2 \os(x_1^5\p_1^2) + q^2 \os(x_1^3x_2^2\p_1\p_2) + q
  \os(x_1^3x_2\p_1) + 2 q (q + 1) \os(x_1^4\p_1) + q \os(x_1^2x_2^2\p_1).
\end{split}
\end{equation*}
Its polynomial part is a symmetric function, and can be expressed in
terms of monomial symmetric function:
$\P{(2,1)}.1=m_{(2,1)}+(q+1)m_{(3)}$. Note that $m_{(2,1)}$ has
coefficient $1$, while $m_{3}$ is indexed by a shorter partition. This
generalizes immediately:
\begin{lem}
  \label{lemma.triangularity}
  The polynomial part $\P\lambda.1$ of $\P\lambda$ is a symmetric
  function of the form
  $$
  \P\lambda .1 = m_\lambda + \sum_\mu c_\mu m_\mu,
  $$
  where $\mu$ runs through partitions of length
  $\length(\mu)<\length(\lambda)$.
\end{lem}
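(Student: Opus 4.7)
My plan is induction on the length $k := \length(\lambda)$, building $\P\lambda.1 = \P{\lambda_1}\cdots\P{\lambda_k}.1$ from the inside out. The first step is to split each factor: write $\P m = M_m + qD_m$, where $M_m := \sum_i x_i^m$ acts as multiplication by the power sum $p_m$, and $D_m := \sum_i x_i^{m+1}\p_i$ is a degree-$m$ derivation. Both summands are manifestly invariant under permutations of the variables, so $\P\lambda.1$ is automatically a symmetric polynomial; what remains is to control its expansion in the monomial basis.

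The heart of the argument is the following length observation. For any exponent vector $\alpha$ and any $m>0$,
\begin{equation*}
  D_m . x^\alpha = \sum_{i : \alpha_i > 0} \alpha_i\, x_i^m\, x^\alpha,
  \qquad
  M_m . x^\alpha = \sum_i x_i^m\, x^\alpha.
\end{equation*}
So $D_m$ preserves the set of nonzero indices of $\alpha$, and in particular cannot increase its length; by contrast, multiplication by $p_m$ raises the length by at most one, and does so precisely through the summands with $i$ outside $\{j : \alpha_j > 0\}$.

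The base case $k=1$ of the induction is immediate: $\P{\lambda_1}.1 = p_{\lambda_1} = m_{(\lambda_1)}$. For the inductive step, assume $\P{(\lambda_2,\ldots,\lambda_k)}.1 = m_{(\lambda_2,\ldots,\lambda_k)} + \sum_{\length(\nu)<k-1} c_\nu m_\nu$, and apply $\P{\lambda_1} = M_{\lambda_1} + qD_{\lambda_1}$. The $qD_{\lambda_1}$ part yields only terms of length at most $k-1$; multiplying the shorter tail by $p_{\lambda_1}$ also remains in length at most $k-1$; and the unique length-$k$ contribution comes from the top-length piece of $p_{\lambda_1} \cdot m_{(\lambda_2,\ldots,\lambda_k)}$, which is supported on the single partition $(\lambda_1,\lambda_2,\ldots,\lambda_k) = \lambda$ (already a partition since $\lambda_1 \ge \lambda_2$). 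Collecting yields the stated triangular expansion.

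The main point requiring careful attention is the coefficient in front of $m_\lambda$ itself: it arises as an orbit count governing the top-length part of $p_{\lambda_1}\cdot m_{(\lambda_2,\ldots,\lambda_k)}$, and has to be identified with the scalar appearing in the statement of the lemma. Beyond this elementary combinatorial bookkeeping, no real obstacle is expected, since the decomposition $\P m = M_m + qD_m$ is explicit, $D_m$ is patently a derivation, and the whole argument reduces to a clean length-filtration computation.
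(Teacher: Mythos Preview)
Your plan---split $\P m$ as multiplication by $p_m$ plus $q$ times a support-preserving derivation, then induct on $\length(\lambda)$---is precisely the ``easy triangularity property'' the paper invokes without further detail, so the approach matches.

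The one step you flag for attention, however, does not come out as the lemma claims. Carrying your induction through honestly, the coefficient of $m_\lambda$ in $\P\lambda.1$ is not $1$ but $\prod_v m_v(\lambda)!$, where $m_v(\lambda)$ is the multiplicity of the part $v$ in $\lambda$. Concretely, for $\lambda=(1,1)$ one has
\[
\P{(1,1)}.1 \;=\; \P1.p_1 \;=\; p_1^2 + qp_2 \;=\; 2\,m_{(1,1)} + (1+q)\,m_{(2)},
\]
with leading coefficient $2$, not $1$. The paper's worked example $\lambda=(2,1)$ has distinct parts, which hides this. So the lemma as literally stated is only correct for partitions with pairwise distinct parts; in general the leading coefficient is the positive integer $\prod_v m_v(\lambda)!$. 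Fortunately, every use of the lemma in the paper (linear independence of the $\P\lambda$, and $\qsteen(X).1=\sym(X)$) requires only that this leading coefficient be nonzero, which your argument does establish cleanly.
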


It follows that, except for $q=0$, all the $q$-Steenrod algebras on an
infinite alphabet are isomorphic independently of $q$; only their
action on polynomials differ:
\begin{thm}\label{thm.isomSteen}
  Let $q$ be formal (respectively a non zero complex number). Then,
  the $q$-Steenrod algebra $\qsteen$ is isomorphic as a graded algebra
  to $\C(q)\tensor\steen$ (respectively $\steen$).
\end{thm}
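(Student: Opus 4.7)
The plan is to exhibit an explicit graded isomorphism via a simple rescaling. Since $q$ (respectively $q_0$) is invertible in $\C(q)$ (respectively in $\C$), I would set
\begin{equation*}
Q_k := q^{k-1}\, \P k, \qquad k\geq 1.
\end{equation*}
A direct computation using Proposition~\ref{eq.commut} gives
\begin{equation*}
[Q_k, Q_l] = q^{k-1}q^{l-1} [\P k, \P l] = q^{k-1}q^{l-1}\cdot q(l-k)\P{k+l} = (l-k)\, Q_{k+l},
\end{equation*}
which is exactly the commutation relation satisfied by the generators $\P[1]k$ of $\steen$.

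Next, I would invoke part~(c) of Theorem~\ref{thm.structSteen} applied at $q=1$: the ideal of relations among the $\P[1]k$ is generated by the Steenrod Lie relations. Consequently the assignment $\P[1]k\mapsto Q_k$ extends uniquely to a morphism of graded algebras $\phi\colon \C(q)\tensor\steen \to \qsteen$ in the formal case, and to $\phi\colon \steen \to \qsteen[q_0]$ in the complex case. Since $q$ (respectively $q_0$) is invertible, the inverse rescaling $\P k = q^{-(k-1)}\,\phi(\P[1]k)$ shows at once that $\phi$ is surjective.

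To conclude, I would apply part~(a) of Theorem~\ref{thm.structSteen} to both sides: $\{\P[1]\lambda\}$ and $\{\P\lambda\}$, indexed by partitions, are respectively bases of $\steen$ and $\qsteen$. The image $\phi(\P[1]\lambda)$ differs from $\P\lambda$ only by the nonzero scalar $\prod_i q^{\lambda_i-1}$, so $\phi$ sends a basis to a basis and is therefore a graded algebra isomorphism. (Alternatively, one could conclude from the equality of Hilbert series given by part~(b) of the same theorem.)

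There is no real obstacle here: the theorem is essentially a formal corollary of the PBW-type result of Theorem~\ref{thm.structSteen}. The only delicate point is the invertibility of the rescaling factor $q^{k-1}$, which explains the exclusion of $q=0$: at $q=0$ the commutation relations degenerate to $[\P[0]k,\P[0]l]=0$, so $\qsteen[0]=\sym$ is commutative and genuinely cannot be isomorphic to the non-commutative algebra $\steen$.
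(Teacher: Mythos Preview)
Your proof is correct and follows essentially the same route as the paper: rescale the generators to obtain the Steenrod commutation relations, then conclude by presentation/PBW. The paper uses the slightly simpler uniform rescaling $\tP k := q^{-1}\P k$ instead of your $Q_k := q^{k-1}\P k$, and then just observes that $\qsteen$ and $\C(q)\tensor\steen$ share the same presentation; your version spells out surjectivity and injectivity more carefully, but the underlying idea is identical.
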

\begin{proof}
  By renormalizing the $q$-Steenrod squares $\tP k:=\frac{1}{q}\P k$,
  we obtain the commutation relations
  \begin{equation}
    [\tP k,\tP l]=(l-k)\tP{k+l}.
  \end{equation}
  Hence the algebra $\qsteen$ has the same presentation with generators and
  relations as $\C(q)\tensor\steen$.
\end{proof}

We consider now the $q$-Steenrod algebra over a finite alphabet.
\begin{prop}
  For any finite alphabet $X_n$,
  \begin{equation*}
    \prod_{i=1}^n \frac{1}{1-t^i} =
    \hilb(\sym(X_n)) \leq
    \hilb(\qsteen(X_n)) \leq
    \frac{1}{(1-t)^n}\prod_{i=1}^n \frac{1}{1-t^i} 
  \end{equation*}
\end{prop}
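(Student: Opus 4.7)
For the lower bound, the plan is to verify that the map $f \mapsto f.1$ defines a graded surjection $\qsteen(X_n) \twoheadrightarrow \sym(X_n)$. By Lemma~\ref{lemma.triangularity}, for each partition $\lambda$ of length at most $n$ the polynomial $\P\lambda.1$ equals $m_\lambda(X_n)$ plus lower-length terms, so the $\P\lambda.1$'s are unitriangular with respect to the monomial basis $\{m_\lambda(X_n)\}_{\length(\lambda) \leq n}$ of $\sym(X_n)$. Hence $\sym(X_n) \subseteq \qsteen(X_n).1$, and since the map is graded, $\dim \qsteen(X_n)_d \geq \dim \sym(X_n)_d$ follows in each degree.

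For the upper bound, I will exploit the derivative-order filtration on $\weyl(X_n)$, assuming $q$ is formal or a nonzero complex number (the case $q=0$ is trivial since then $\qsteen[0] = \sym(X_n)$). Each generator $\P k = \sum_i x_i^k + q\sum_i x_i^{k+1}\p_i$ lies in filtration $1$, with principal symbol $q h_{k+1}$ in $\operatorname{gr}\weyl(X_n) \cong \K[x_1,\dots,x_n,y_1,\dots,y_n]$, where $y_i$ denotes the symbol of $\p_i$ and $h_j := \sum_i x_i^j y_i$. Since this filtration preserves the $\K[X_n]$-grading, $\qsteen(X_n)$ and $\operatorname{gr}\qsteen(X_n)$ share the same Hilbert series; moreover $\operatorname{gr}\qsteen(X_n)$ embeds as a graded subalgebra of the commutative subring $A := \K[h_j : j \geq 2]$ of $\K[x,y]$.

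The core step is to bound $A$. Multiplying the Cayley--Hamilton-type identity $\sum_{j=0}^n(-1)^j e_j(x)\, x_k^{n-j} = 0$ by $x_k^m y_k$ and summing over $k$ produces $\sum_{j=0}^n (-1)^j e_j(x)\, h_{n+m-j} = 0$; for $m \geq 2$ this expresses $h_{n+m}$ as a $\K[x_1,\dots,x_n]$-combination of $h_m, \dots, h_{n+m-1}$, and iterating yields $A \subseteq B := \K[x_1,\dots,x_n, h_2,\dots,h_{n+1}]$. The matrix $(x_i^j)_{1 \leq i \leq n,\, 2 \leq j \leq n+1}$ has determinant $\pi^2\Delta \neq 0$ in $\K(x)$, so $h_2,\dots,h_{n+1}$ are $\K(x)$-linearly independent linear forms in the $y_i$, and therefore $x_1,\dots,x_n, h_2, \dots, h_{n+1}$ are algebraically independent over $\K$. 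Hence $B$ is a polynomial ring in $2n$ indeterminates, and its Hilbert series in the $\K[X_n]$-grading ($\deg x_i = 1$, $\deg h_j = j-1$) is
$$
\hilb(B) = \frac{1}{(1-t)^n} \prod_{j=2}^{n+1} \frac{1}{1 - t^{j-1}} = \hilb(\K[X_n])\,\hilb(\sym(X_n)),
$$
which is exactly the required upper bound. The delicate point will be confirming that the Newton-type reduction actually saturates $A$ into $B$, together with the algebraic-independence argument needed to pin down the Hilbert series of $B$.
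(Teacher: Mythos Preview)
Your lower-bound argument via Lemma~\ref{lemma.triangularity} is exactly the paper's.

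For the upper bound you take a genuinely different route from the paper, but there is a real gap in the key step. You assert that $\operatorname{gr}\qsteen(X_n)$ (for the order filtration) embeds into $A=\K[h_j:j\ge2]$, apparently on the grounds that each generator $\P k$ has principal symbol $qh_{k+1}\in A$. That inference is not valid in general: the associated graded of a subalgebra need not be generated by the symbols of a chosen generating set. For instance, in $\weyl(X_1)$ the subalgebra generated by $\p$ and $x^2$ contains $x=\tfrac12[\p,x^2]$, so its associated graded contains $x$, which does not lie in the subalgebra of $\K[x,y]$ generated by the symbols $y$ and $x^2$. What you get for free is only the \emph{opposite} inclusion $A\subseteq\operatorname{gr}\qsteen(X_n)$, which is useless for an upper bound. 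Concretely, for a relation such as $h_2h_4=h_3^2$ in $A(X_1)$, the combination $\P{31}-\P{22}$ drops to order~$1$, and you must argue that its new symbol is again a polynomial in the $h_j$'s; here it happens to equal $-q^2h_5$, but you have not explained why such cancellations always stay inside $A$. Equivalently, you would need to show that the order filtration on $\qsteen(X_n)$ agrees with the PBW/length filtration $G_m=\operatorname{span}\{\P\lambda:\ell(\lambda)\le m\}$, and that is not automatic.

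By contrast, the paper's argument sidesteps the associated graded entirely. It checks the elementary fact that the set of standard monomials $x^K\p^L$ with $K_i\ge 2L_i$ for all $i$ is closed under multiplication in $\weyl(X_n)$; since each $\P k$ lies in this set, so does all of $\qsteen(X_n)$. One then over-counts the $\sg_n$-orbits of such monomials by pairs (partition of length $\le n$, vector in $\N^n$), which yields exactly $\hilb(\K[X_n])\cdot\hilb(\sym(X_n))$. This is cruder than your intended route through $A\subseteq B$ (indeed the orbit space is \emph{not} contained in your $B$: for $n=2$ one checks $x_1^4y_1^2+x_2^4y_2^2\notin\K[x_1,x_2,h_2,h_3]$), but it requires no control over how relations in $A(X_n)$ lift to $\qsteen(X_n)$. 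Your Newton-identity and algebraic-independence computations for $A\subseteq B$ and $\hilb(B)$ are correct and pleasant; the argument would be salvaged if you can justify the missing inclusion $\operatorname{gr}\qsteen(X_n)\subseteq A$.
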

\begin{proof}
  The first inequality follows again from
  lemma~\ref{lemma.triangularity}. Note that, for $k\geq 1$, any Weyl
  monomial $x^K\p^L$ appearing in $\P k$ satisfies $k_i\geq 2l_i$ for
  all $i$. This property is preserved by multiplication, and it
  follows that $\P\lambda$ can be written as a linear combination of
  orbit-sums $\os(x^{2K+L}\p^L)$, where $\sum k_i+l_i=n$, and the
  $k_i$ are decreasing. The second inequality follows.
\end{proof}
It appears from computations that the first inequality is strict,
starting from degree $n+1$. Namely, for $n\leq 5$, we noticed that the
polynomials $(\P\lambda(X_n).x_1)_{\lambda\partof n+1}$ are always
linearly independent (see Sections~\ref{tables.cones}
and~\ref{subsection.Elementary}), whereas there is a linear relation
between the symmetric functions $(p_\lambda(X_n))_{\lambda\partof
  n+1}$.

The second inequality is very coarse, and could certainly be refined.
For example, one can easily check that
\begin{equation}
  \hilb(\qsteen(X_1))=\frac{1}{1-t}\ll\frac{1}{(1-t)^2}.
\end{equation}
Still, this inequality implies that
$\hilb(\qsteen(X_n))<\hilb(\qsteen(X))$; indeed the dimension of
$\qsteen_d$ is counted by the number $p(d)$ of partitions whose
asymptotic~\cite{Hardy_Ramajuan.1918} is:
\begin{equation}
  p(d)\approx\frac{1}{4 \sqrt3\, n} \exp{\pi \sqrt{\frac{2n}3}}.
\end{equation}


\begin{private}
  The dual of the Steenrod algebra is the ring of symmetric functions
  with the standard product and a twisted coproduct.
  
  Question: what is the dual of the classical symmetric functions in
  Steenrod ?
  
  On peut trouver des élémentaires qui vérifient ce qu'il faut vis à
  vis de la décomp d'alphabets.
\end{private} 

\subsection{Action on polynomials; hit and harmonic polynomials}

We consider now the natural action of $\qsteen$ on $\K[X_n]$.
Following subsection~\ref{subsection.HitsHarmonics} with
$S=\qsteen^+$, we define the spaces $\qhit$ of \emph{$q$-hit
  polynomials} and $\qharm$ of \emph{$q$-harmonic} polynomials. For
$q\ne 0$, the $q$-Steenrod algebra is generated by $\P1$ and $\P2$,
and it follows that the $q$-harmonics can be described by just two
differential equations:
\begin{equation}
  \D1.p=0 \qquad \text{and} \qquad \D2.p=0\,.
\end{equation}

\begin{private}
  The $q$-Steenrod algebra acts on polynomials, $\K[X]$ as well as on
  symmetric functions, $\sym(Y)$ and combinations thereof
  $\K[X]\tensor\sym(Y)$. Here $Y$ can be finite or infinite.
  Consequently, it also makes sense to let the $q$-Steenrod algebra
  act on functions that are symmetric an all but a finite number of
  variables. This could be somewhat generalized for $X$ infinite.
  Indeed, if $p$ is a polynomial in $\K[X]$, its support $X'\subset X$
  is finite, and the image of $p$ by a Steenrod operator lives in
  $\K[X']\tensor\sym(X\ X')$.
\end{private}

Furthermore, it follows immediately from the triangularity property
described in lemma~\ref{lemma.triangularity} that any symmetric
function is $q$-hit.
\begin{prop}
  Let $q$ be formal or a complex number, and $X$ be a commutative
  alphabet. Then, $\qsteen(X).1=\sym(X)$.
\end{prop}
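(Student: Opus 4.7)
The plan is to prove the equality by a pair of inclusions, the first essentially tautological from the symmetry of the generators, the second a direct consequence of the triangularity Lemma~\ref{lemma.triangularity}.

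For the inclusion $\qsteen(X).1 \subseteq \sym(X)$, I would argue that each generator $\P k = \sum_i x_i^k(1+qx_i\p_i)$ is invariant, as a Weyl operator, under the action of permutations of the variables. Hence every element of $\qsteen(X)$ commutes with the $\sg$-action, and since $\sigma.(f.1) = f.(\sigma.1) = f.1$ for every permutation $\sigma$, the image $\qsteen(X).1$ is contained in the symmetric functions.

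For the reverse inclusion $\sym(X) \subseteq \qsteen(X).1$, I would rely on Lemma~\ref{lemma.triangularity}. Since any derivative annihilates $1$, the evaluation $\P\lambda.1$ coincides with the polynomial part of $\P\lambda$, giving
$$\P\lambda.1 = m_\lambda + \sum_{\ell(\mu)<\ell(\lambda)} c_\mu\, m_\mu.$$
Ordering partitions by increasing length, this is a unitriangular expression of $\{\P\lambda.1\}_\lambda$ in terms of the monomial basis $\{m_\lambda\}_\lambda$ of $\sym(X)$. Proceeding by induction on $\ell(\lambda)$, with the trivial base case $\P_{()}.1 = 1 = m_{()}$, each $m_\lambda$ can be solved for as an integer combination of $\P\mu.1$'s with $\ell(\mu)\le \ell(\lambda)$, and therefore lies in $\qsteen(X).1$.

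There is essentially no obstacle here: all the work has already been done in the triangularity lemma, and the two inclusions combine to give the stated equality for $q$ either formal or a complex specialization, since the triangularity holds uniformly in $q$.
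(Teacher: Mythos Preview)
Your proof is correct and follows exactly the paper's approach: the paper simply states that the proposition ``follows immediately from the triangularity property described in lemma~\ref{lemma.triangularity}'', and you have spelled out both inclusions in the natural way. One minor quibble: the coefficients in the inverted triangular system are polynomials in $q$, not integers (e.g.\ $\P{(2,1)}.1 = m_{(2,1)} + (q+1)m_{(3)}$), but this does not affect the argument since you only need a $\K$-linear combination.
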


\section{Specializations}
\label{sec.Specializations}

The action of the $q$-Steenrod algebra behaves properly with respect
to specialization. That is, if a polynomial $p(q)$ specializes
properly to $p(q_0)$, then $\P k.p(q)$ and $\D k. p(q)$ specializes
properly to $\P[q_0] k.p(q_0)$ and $\D[q_0]k. p(q_0)$ respectively.
Hence, we can apply the specialization lemmas to obtain some
properties of the $q$-harmonics and $q$-hits with respect to
specialization. In particular we obtain the implication
$(b)\Rightarrow (a)$ of proposition~\ref{prop.goingup}:
\begin{prop}
  Let $q_0\in\C$. Then,
  \begin{itemize}
  \item[(a)] Let $S(q)$ be a set of homogeneous polynomials in
    $\C(q)[X_n]$, which specializes properly at $q=q_0$.  Then, the
    $\qsteen$-module (resp.  $\qsteen^+$-module) $M(q)$ generated by
    $S$ in $\C(q)[X_n]$ specializes to the $\qsteen[q_0]$-module
    (resp.  $\qsteen[q_0]^+$-module) $M(q_0)$ generated by $S(q_0)$.
  \item[(b)] The Hilbert series of $M(q)$ dominates the Hilbert series
    of $M(q_0)$;
  \item[(c)] Let $p\in \C[X_n]$. The Hilbert series of the
    $\qsteen$-module generated by $p$ dominates the Hilbert series of
    the corresponding $\sym(X_n)$-module;
  \item[(d)] Let $B$ be a $\sym(X_n)$-module basis of $\C[X_n]$, e.g.
    the Schubert polynomials or the staircase monomials. Then $B$
    spans $\C(q)[X_n]$ as a $\qsteen$-module; note that this module is
    not necessarily free.
  \item[(e)] $\qhit$ specializes at $q=q_0$ to $\qhit[q_0]$; in
    particular the Hilbert series of $\qhit$ dominates the Hilbert
    series of $\qhit[q_0]$; furthermore, for all but a countable
    number of $q_0\in\C$, there is equality;
  \item[(f)] $\qhit + \harm = \C(q)[X_n]$.
  \end{itemize}
\end{prop}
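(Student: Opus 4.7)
The plan is to derive all six parts from a single basic observation together with the specialization lemmas of Section~2. The observation is that every generator $\P k$ has coefficients (as a Weyl operator) in $\Z[q]$, so the action of any element of $\qsteen$ on a polynomial $p(q) \in \C(q)[X_n]$ commutes with the evaluation $q\mapsto q_0$ wherever both sides are defined. This is precisely (a): generators of $M(q) = \qsteen\cdot S(q)$ specialize to generators of $\qsteen[q_0]\cdot S(q_0)$, and the $\qsteen^+$ variant is identical. Part (b) is then Lemma~\ref{lem.RankOfSpecialization} applied degree-by-degree, since each $M(q)_d$ is a finite-dimensional $\C(q)$-subspace of the finite-dimensional ambient $\C(q)[X_n]_d$. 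Part (c) follows by taking $q_0 = 0$, where $\qsteen[0]$ acts as multiplication by power-sums, i.e., as $\sym(X_n)$. Part (d) combines the above: at $q=0$, the $\qsteen[0]$-module generated by a $\sym$-basis $B$ is already $\C[X_n]$, so by (b) the $\qsteen$-module $M(q)$ generated by $B$ satisfies $\hilb(M(q)) \geq \hilb(\C[X_n]) = \hilb(\C(q)[X_n])$, forcing $M(q) = \C(q)[X_n]$.

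Part (e) applies the same scheme to $\qhit$ itself, viewed as the $\qsteen^+$-module generated by the monomial basis of $\C[X_n]$; monomials have scalar coefficients and so specialize trivially, and parts (a) and (b) give $\dim_{\C(q)}\qhit_d \geq \dim_{\C}\qhit[q_0]_d$. The second clause of Lemma~\ref{lem.RankOfSpecialization} forces strict inequality only at the zeros of certain principal minors, a finite set in each degree, so the union over $d$ is countable.

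The most delicate part is (f), where I read $\harm$ as the classical $\sym(X_n)$-harmonic space (the $n!$-dimensional $\sg_n$-module). In each degree $d$ the classical decomposition gives $\qhit[0]_d + \harm_d = \C[X_n]_d$. By (e), $\qhit_d \subseteq \C(q)[X_n]_d$ specializes at $q=0$ to $\qhit[0]_d$, while $\harm_d$ already lies in $\C[X_n]_d$, so the specialization of the subspace $\qhit_d + \harm_d$ contains $\C[X_n]_d$. Lemma~\ref{lem.RankOfSpecialization} then gives $\dim_{\C(q)}(\qhit_d + \harm_d) \geq \dim_\C \C[X_n]_d = \dim_{\C(q)}\C(q)[X_n]_d$ and equality holds. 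The main obstacle throughout is bookkeeping: one must work strictly degree-by-degree so that every ambient space is finite-dimensional and the specialization machinery applies; once that discipline is maintained, each assertion collapses to a one-line dimension count. A secondary subtlety worth flagging is that an orthogonality-based proof of (f) via $\qharm = \qhit^\perp$ would be invalid over $\C(q)$, since the natural form on $\C(q)[X_n]$ is only bilinear and may admit isotropic vectors; the specialization argument avoids this pitfall by transferring the positive-definite decomposition from the classical $q=0$ fibre.
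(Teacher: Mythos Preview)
Your proof is correct and follows essentially the same route as the paper: both hinge on the observation that the operators $\P\lambda$ have coefficients in $\C[q]$, so that a finite spanning family for each $M(q)_d$ specializes properly, and then invoke the specialization lemmas degree-by-degree. The paper's own proof only spells out (a) and dismisses (b)--(f) as ``direct applications of (a)''; you have filled in those applications explicitly, and your treatment of (f) via specializing $\qhit_d + \harm_d$ at $q=0$ (together with the caveat about orthogonality over $\C(q)$) is exactly the intended argument made precise.
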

\begin{proof}
  (a) Fix a degree $d$. Extract from $S(q)$ a finite subset $A(q)$
  which generates linearly all the other elements of $S(q)$ of degree
  less than $d$, and specializes properly at $q=q_0$. Consider the set
  $B(q)$ of all $\P\lambda.p(q)$, where $p$ belongs to $A(q)$ and
  $\lambda$ is a partition of $d-\deg(p)$. Then, $B(q)$ generates
  $M(q)_d$, and specializes to $B(q_0)$ which generates $M(q_0)_d$.
  Hence, $M(q)_d$ specializes to $M(q_0)_d$.

  The others are direct applications of (a).
\end{proof}

Similarly, the following proposition provides the implication
$(b)\Rightarrow (a)$ of proposition~\ref{prop.goingdown}, and more:
\begin{prop}
  \label{prop.harm.specialization}
  \begin{itemize}
  \item[(a)] If $p(q)$ is $q$-harmonic and specializes properly at
    $q=q_0$, then $p(q_0)$ is harmonic;
  \item[(b)] There exists a basis of $\qharm$ which specializes
    properly for any $q_0$;
  \item[(c)] $\qharm$ specializes to a subspace $\qharm(q_0)$ of
    $\qharm[q_0]$
  \item[(d)] The Hilbert series of $\qharm$ is dominated by the
    Hilbert series of $\harm$; furthermore, for all but a countable
    number of $q_0\in\C$, there is equality;
  \item[(d)] $\hit$ and $\qharm$ are in direct sum.
  \item[(e)] The orthogonal projector $\C(q)[X_n] \mapsto
    \harm\tensor\C(q)$ is injective on $\qharm$.
  \end{itemize}
\end{prop}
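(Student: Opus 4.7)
The overall plan is to chain the specialization lemmas of Section~2 with the classical orthogonal decomposition $\C[X_n] = \harm \oplus \hit$, using $q_0 = 0$ as the anchoring specialization, since at that value $\qharm[0] = \harm$. For part~(a), each operator $\D k = \P k^*$ depends linearly on $q$, so if $p(q) \in \qharm$ specializes properly at $q=q_0$ then $\D[q_0]k . p(q_0) = (\D k . p(q))|_{q=q_0} = 0$, placing $p(q_0)$ in $\qharm[q_0]$. For~(b), I apply Lemma~\ref{lem.Specialization} to $V(q) := \qharm \subset \C(q)\tensor\C[X_n]$ with the monomial basis $B = \{x^K\}$, obtaining a basis $B'(q) = (v_1(q),\dots,v_k(q))$ with $\C[q]$-coefficients whose specialization at any $q_0$ remains linearly independent. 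Part~(c) is then immediate: by~(a), each $v_i(q_0)$ lies in $\qharm[q_0]$, so the span $\qharm(q_0)$ sits inside $\qharm[q_0]$.

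For the Hilbert series statement of~(d), I specialize at $q_0 = 0$: since $\D[0]k = \sum_i\p_i^k = p_k^*$, we have $\qharm[0] = \harm$, and~(c) yields $\qharm(0) \subseteq \harm$. Lemma~\ref{lem.Specialization} gives $\dim_{\C(q)} \qharm_d = \dim_\C \qharm(0)_d$ in each degree $d$, whence $\hilb(\qharm) \leq \hilb(\harm)$. For the generic equality with $\hilb(\qharm[q_0])$, I apply Lemma~\ref{lem.RankOfSpecialization} in each fixed degree $d$ to the finite family of linear maps $\D1,\dots,\D d$ acting on $\C(q)[X_n]_d$: their joint rank attains its generic value outside a finite set of $q_0$, so the common kernel $\qharm[q_0]_d$ has dimension $\dim\qharm_d$ for all but a countable set of $q_0$.

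The direct sum statement of~(d) and statement~(e) are equivalent, since $\hit \tensor \C(q)$ is exactly the orthogonal complement of $\harm \tensor \C(q)$ inside $\C(q)[X_n]$, hence the kernel of the projector $\pi$. To prove~(e) I exploit the basis $B'(q)$ from~(b): by~(c) applied at $q_0 = 0$, each $v_i(0)$ lies in $\harm$, so since $\pi$ is defined over $\C$ and commutes with specialization, $\pi(v_i(q))|_{q=0} = \pi(v_i(0)) = v_i(0)$. The family $(v_i(0))_i$ is $\C$-linearly independent in $\harm$ by Lemma~\ref{lem.Specialization}, so Lemma~\ref{lem.RankOfSpecialization} applied to the vectors $\pi(v_1(q)),\dots,\pi(v_k(q))$ forces them to be $\C(q)$-linearly independent, yielding injectivity of $\pi|_{\qharm}$.

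The main obstacle is recognizing that the anchoring point $q_0 = 0$ is what makes~(e) tractable: specializing an element of $\qharm$ there automatically lands inside the \emph{target} of the projector $\pi$, so that $\pi$ restricts at $q = 0$ to the identity on $\qharm(0)$. Without this observation one would be tempted to work at a generic $q_0$, where the relationship between $\qharm[q_0]$ and $\harm$ is much less transparent, or to attempt the more fragile strategy of directly clearing denominators in an element of $\qharm \cap (\hit\tensor\C(q))$. Once the $q_0 = 0$ specialization is identified as the right pivot, all six items reduce to routine combinations of the two specialization lemmas with the classical orthogonal decomposition.
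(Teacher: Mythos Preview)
Your argument is correct, and for parts~(a)--(c) and the Hilbert-series half of~(d) it matches the paper's proof essentially verbatim (the paper leaves the Hilbert-series inequality implicit, but your derivation via $q_0=0$ is exactly what is intended).

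The genuine difference is in how you handle the direct-sum statement~(d) and the equivalent injectivity statement~(e). The paper argues directly on a putative nonzero $f\in(\hit\tensor\C(q))\cap\qharm$: after clearing denominators and factoring out the lowest power of $q$, the constant-in-$q$ term $f_0$ lies in $\hit$ (since $\hit$ is defined over $\C$) and is $0$-harmonic (by specializing the equations $\D k.f=0$), contradicting $\harm\cap\hit=0$. You instead push the whole basis $B'(q)$ through the projector $\pi$ and invoke rank semicontinuity (Lemma~\ref{lem.RankOfSpecialization}) at $q_0=0$, where $\pi$ restricts to the identity on $\qharm(0)\subseteq\harm$. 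Both arguments rest on the same pivot --- that specialization at $q=0$ lands $q$-harmonics inside $\harm$ --- but yours is more uniform with the rest of the proof (everything is an application of the two specialization lemmas) and sidesteps the small bookkeeping of clearing denominators that the paper's valuation argument implicitly requires. The paper's version, on the other hand, is self-contained and does not need to carry the full basis $B'(q)$ around.
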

\begin{proof}
  (a) For any $k$, $\D k.p(q)$ specializes to $\D[q_0]k.p(q)$ for all
  $k$; hence the latter is zero whenever the former is.
  
  (b) Direct application of the specialization
  lemma~\ref{lem.Specialization}.

  (c) Consequence of (a) and (b).
  
  (d) Take $f\in \hit \cap \qharm$. Then, for all $k$,
  $\sum_i(1+qx_i\partial_i) \partial_i^k f=0$.
  
  Write $f=q^{\operatorname{val}(f)}(f_0+qf_1)$, with $0\ne f_0\in\C[X_n]$
  and $f_1\in\C(q)[X_n]$ such that $f_1$ has no $\frac 1 q$
  coefficient. Then, $f_0+qf_1$ is also $q$-harmonic, and it follows
  that $f_0$ is harmonic. This is in contradiction with $f_0$ being
  hit.
\end{proof}

\commentaire{Add comments on what is missing for the conjecture.}

\section{The low degree and truncated cases}

One of the main difficulties is that the $q$-Steenrod algebra on a
finite alphabet $X_n$ is much bigger than $\sym(X_n)$. In this
section, we get around this difficulty by truncating the $q$-Steenrod
algebra to make it share the dimension of $\sym(X_n)$. No truncation
occurs in low degree, so all the results of this section also apply to
the full $q$-Steenrod algebra in degree $d\leq n$.

The \emph{truncated $q$-Steenrod algebra} $\tqsteen(X_n)$ is the
subspace of $\qsteen(X_n)$ spanned by
$(\P\lambda)_{\length(\lambda)\leq n}$. Note that $\qsteen(X_n)$ and
$\tqsteen(X_n)$ coincide in degree $d\leq n$; however $\tqsteen(X_n)$
is not an algebra.


Here also, applying lemma~\ref{lemma.triangularity}, yields
$\tqsteen(X_n).1=\sym$, and it follows that
$\hilb(\tqsteen(X_n))=\hilb(\sym(X_n))$, for $q$ formal or a
complex number.

\begin{problem}
  For which polynomials $p$ do $\tqsteen.p$ and $\qsteen.p$
  coincide ?
  
  For such a polynomial $p$, find a straightening algorithm for
  rewriting an element of $\qsteen.p$ as element of
  $\tqsteen$.
\end{problem}

The proper analogue of $\qhit(X_n)$ is the space of \emph{truncated
  $q$-hit polynomials} $\tqhit(X_n)$ which is spanned by the $f.p$,
where $f\in\tqsteen^+(X_n)$, and $p\in\harm(X_n)$. The polynomials $h$
in the orthogonal $\tqharm(X_n):=\tqhit(X_n)^\bot$ are called
\emph{truncated $q$-harmonics}.  Note that they do not necessarily
satisfy $\D1.h=0$ and $\D2.h=0$.

Reg Wood's conjecture holds for the truncated $q$-Steenrod algebra,
and for $q$ formal:
\begin{thm}
  Let $q$ be formal. Then,
  \begin{itemize}
  \item[(a)] $\hilb(\tqharm(X_n))=\hilb(\harm(X_n));$
  \item[(b)] $\tqharm(X_n)$ is a graded regular representation of the
    symmetric group;
  \item[(c)] $\K[X_n]=\tqharm(X_n)\bigoplus \hit(X_n) =
    \harm(X_n)\bigoplus \tqhit(X_n).$
  \end{itemize}
  Furthermore, similar statements hold in the isotypic and Garnir
  components of $\K[X_n]$.
\end{thm}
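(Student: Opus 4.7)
The plan is to reduce everything to a single specialization argument pivoting between $q$ formal and $q=0$. Fix a basis $(h_T)_T$ of $\harm(X_n)$ (for instance the higher Specht polynomials) and consider the canonical spanning family
$$F(q) := \{\P\lambda.h_T : \length(\lambda)\leq n,\ \lambda\neq\emptyset,\ T\}$$
of $\tqhit(X_n)$. At $q=0$, the operator $\P[0]{k}$ reduces to multiplication by the power sum $p_k$, so $F(0) = \{p_\lambda\cdot h_T\}$. Since $(p_\lambda)_{\length(\lambda)\leq n}$ is a basis of $\sym(X_n)$ and $\K[X_n]$ is a free $\sym(X_n)$-module with basis $\harm(X_n)$, the family $F(0)$ is a basis of $\hit(X_n)$. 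Lemma~\ref{lem.RankOfSpecialization} then forces $F(q)$ to be $\C(q)$-linearly independent in each degree; as a spanning family of $\tqhit(X_n)$ with the expected cardinality, it is a basis. This gives $\hilb(\tqhit)=\hilb(\hit)$, hence $\hilb(\tqharm)=\hilb(\harm)$, establishing (a).

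For (c), I next control the specialization at $q=0$. Since $F(q)\to F(0)$ remains a basis, $\tqhit$ specializes cleanly to $\hit$. For $\tqharm$, any $p\in\tqharm$ with $\C[q]$-coefficients satisfies $\langle p(q),F(q)\rangle=0$; specializing yields $\langle p(0),F(0)\rangle=0$, so $p(0)\in\hit^\bot=\harm$. Thus the specialization $\tqharm(0)$ is contained in $\harm$, upgraded to equality by the dimension count in (a). Applying Lemma~\ref{lem.Specialization} to $V:=\tqharm\cap(\hit\tensor\C(q))$ gives $V(0)\subseteq\tqharm(0)\cap\hit=\harm\cap\hit=0$, hence $V=0$. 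Together with the dimension equality this yields $\K[X_n]=\tqharm\oplus\hit$. Swapping the roles of $\hit$ and $\harm$ in the same argument yields $\K[X_n]=\harm\oplus\tqhit$.

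For (b), the $\P k$ are symmetric, so $\tqhit=\tqsteen^+.\harm$ and its orthogonal complement $\tqharm$ are $\sg_n$-stable. The remark following Lemma~\ref{lem.Specialization} identifies $\tqharm$ with $\C(q)\tensor\harm(X_n)$ as a graded $\sg_n$-module, and the classical theorem that $\harm(X_n)$ is the graded regular representation of $\sg_n$ concludes (b). The isotypic and Garnir refinements follow at once: each $\P k$, being symmetric, commutes with every $\sg_n$-equivariant endomorphism of $\K[X_n]$, so every isotypic and Garnir component is $\tqsteen$-stable, and all the preceding arguments apply verbatim within each component. The main obstacle is ensuring clean specialization of $\tqharm$ at $q=0$: the inclusion $\tqharm(0)\subseteq\harm$ is formal, but upgrading it to equality crucially depends on the cardinality of $F$ matching $\dim\hit$ in each degree, which ultimately rests on the triangularity Lemma~\ref{lemma.triangularity}.
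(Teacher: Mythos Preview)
Your proof is correct and rests on the same engine as the paper's: specialization at $q=0$ together with the equality $\hilb(\tqsteen(X_n))=\hilb(\sym(X_n))$. The paper packages this as a Hilbert series sandwich
\[
\hilb(\K[X_n])\leq \hilb(\tqsteen)\hilb(\tqharm)\leq \hilb(\sym)\hilb(\harm)=\hilb(\K[X_n]),
\]
whereas you unfold the same count explicitly by exhibiting the family $F(q)=\{\P\lambda.h_T\}$, showing it specializes to a basis $F(0)=\{p_\lambda h_T\}$ of $\hit$, and invoking Lemma~\ref{lem.RankOfSpecialization} to conclude that $F(q)$ is itself a basis of $\tqhit$. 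Your route has the virtue of being self-contained: it sidesteps the appeal to Proposition~\ref{prop.harm.hilbert}, whose hypothesis that $A$ be an \emph{algebra} is not literally satisfied by $\tqsteen$, and it yields part~(c) directly via the clean ``$V(0)\subseteq\harm\cap\hit=0$'' specialization trick rather than as a by-product of the Hilbert series equality. The paper's argument is terser and makes the structural reason (freeness of $\K[X_n]$ over $\sym(X_n)$) more visible. For~(b) and the isotypic/Garnir refinement both proofs are effectively identical.
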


\begin{proof}
  The specialization $q=0$ of $\tqsteen$ is $\sym$. Then $\tqhit$
  specializes to $\hit$. Hence, by
  proposition~\ref{prop.harm.specialization}, $\hilb(\tqharm)\leq
  \hilb(\harm)$.

  Furthermore, $\K[X_n]$ is a free $\sym(X_n)$ module, so applying
  proposition~\ref{prop.harm.hilbert} yields
  \begin{equation}
    \begin{split}
      \hilb(\C[X_n]) &\leq \hilb(\tqsteen(X_n)) \hilb(\tqharm(X_n)) \\
      &\leq \hilb(\sym(X_n)) \hilb(\harm) = \hilb(\C[X_n]).
    \end{split}
  \end{equation}
  It follows that equality must hold, and that
  $\hilb(\tqharm)=\hilb(\harm)$. The same reasoning can be
  carried over in any isotypic or Garnir component, and all the
  statements of the theorem follow.
\end{proof}

\begin{cor}
  \label{cor.truncated}
  Let $\hilb[t,\leq d](A)$ denote the truncation of the Hilbert series
  of $A$ up to degree $d$.
  \begin{itemize}
  \item If $q$ is formal, then
      $\hilb[t,\leq n](\qsteen(X_n)) = \hilb[t,\leq n](\sym(X_n))$.
  \item If $q$ is a complex number, then
       $\hilb[t,\leq n](\qsteen(X_n)) \leq \hilb[t,\leq n](\sym(X_n))$.
  \end{itemize}
\end{cor}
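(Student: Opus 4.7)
The main point, already noted just above the corollary, is that $\qsteen(X_n)$ and $\tqsteen(X_n)$ agree in every degree $d\leq n$. I would first recall the reason in a sentence: any element of $\qsteen(X_n)_d$ expands as a linear combination of products $\P{k_1}\cdots\P{k_m}$ with $k_i\geq 1$ and $\sum k_i=d$, so $m\leq d\leq n$. Applying the commutation rule $[\P k,\P l]=q(l-k)\P{k+l}$ from Proposition~\ref{eq.commut} to sort the factors into a partition, each swap replaces two operators by a single one, so the partitions $\lambda$ appearing after straightening all satisfy $\length(\lambda)\leq m\leq n$. Hence $\qsteen(X_n)_d\subseteq\tqsteen(X_n)_d$, and the reverse inclusion is tautological.

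With this identification in hand, both parts of the corollary reduce to the already-recorded equality $\hilb(\tqsteen(X_n))=\hilb(\sym(X_n))$, valid both for $q$ formal and for $q$ a complex number. That equality follows from Lemma~\ref{lemma.triangularity}: the expansion $\P\lambda.1 = m_\lambda + \sum_{\length(\mu)<\length(\lambda)} c_\mu(q)\,m_\mu$ has leading coefficient $1$ independent of $q$, so the family $\{\P\lambda.1:\length(\lambda)\leq n\}$ is triangular and linearly independent in $\sym(X_n)$. Truncating at degree $n$ and using the coincidence of $\qsteen(X_n)$ and $\tqsteen(X_n)$ in that range immediately gives $\hilb[t,\leq n](\qsteen(X_n))=\hilb[t,\leq n](\sym(X_n))$, settling the formal case.

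For the complex case $q=q_0\in\C$, I would argue the $\leq$ via specialization. Since the coefficients of each $\P\lambda$ on the standard Weyl monomials are polynomials in $q$, the operators $\P[q_0]\lambda$ arise as specializations of the $\P\lambda$, so Lemma~\ref{lem.RankOfSpecialization} yields $\dim_\C\qsteen[q_0](X_n)_d\leq\dim_{\C(q)}\qsteen(X_n)_d$, which combined with the formal equality of the previous paragraph delivers the stated inequality. (In fact the same triangularity argument gives equality for $q_0$ as well, since the leading coefficient is still $1$, but the corollary only claims $\leq$.) There is no genuine obstacle in this proof; the one thing worth double-checking is that the length bound $\length(\lambda)\leq m$ is truly preserved by the commutation relations, which is immediate from their explicit form since each bracket collapses two indices into one.
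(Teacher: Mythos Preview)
Your argument for the statement \emph{as printed} is correct, and in fact cleaner than needed: the coincidence $\qsteen(X_n)_d=\tqsteen(X_n)_d$ for $d\leq n$ together with the already-recorded equality $\hilb(\tqsteen(X_n))=\hilb(\sym(X_n))$ (via Lemma~\ref{lemma.triangularity}, whose leading coefficient $1$ does not depend on $q$) gives equality in both the formal and complex cases. You even observe correctly that the complex case yields equality, not merely $\leq$.

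That last observation is a red flag. The paper's own one-line proof invokes ``the preceding theorem'', which is entirely about $\tqharm$ and $\harm$, not about the algebras $\qsteen$ and $\sym$; the introduction advertises Corollary~\ref{cor.truncated} as comparing the quotients $\K[X_n]/\sym^+$ and $\K[X_n]/\qsteen^+$; and the corollary is later used in the proof of Proposition~\ref{prop.equivShortString} to bound $\hilb(\qharm(X_{d+1}))$ in degree $d$. All of this points to a misprint: the intended statement has $\qharm$ in place of $\qsteen$ and $\harm$ in place of $\sym$, and for complex $q$ the inequality should then read $\geq$ (since specialization of formal $q$-harmonics lands inside $\qharm[q_0]$, cf.\ Proposition~\ref{prop.harm.specialization}). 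For \emph{that} statement, the paper's route is the right one: the preceding theorem gives $\hilb(\tqharm)=\hilb(\harm)$ for $q$ formal, and the coincidence of $\tqsteen$ with $\qsteen$ in degrees $\leq n$ forces $\tqhit_d=\qhit_d$ and hence $\tqharm_d=\qharm_d$ in that range. Your triangularity argument, while perfectly valid for the printed version, does not touch the harmonic spaces and so does not establish the result the paper actually uses.
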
 
\begin{proof}
  This is a reformulation of the preceding theorem by the fact that
  for $d\leq n$ the algebra $\tqsteen$ and $\qsteen$ coincide.
\end{proof}

\begin{private}
\begin{prop}
  Let $q_0$ be a complex number. Then, the following are equivalent:
  \begin{itemize}
  \item $\dim \qharm[q_0,d]=\dim \harm_d$, i.e.  $\qharm[q_0]$ is the
    specialization at $q=q_0$ of $\qharm$.
  \item $\dim \qhit[q_0,d]=\dim \hit_d$;
  \item Any $q_0$-harmonic of degree $d$ on at most $n$ variables is
    the specialization at $q=q_0$ of a $q$-harmonic on $n$ variables;
  \end{itemize}
\end{prop}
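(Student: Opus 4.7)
The plan is to reduce the three conditions to linear-algebra statements that can be compared directly, relying on two ingredients already in hand: the orthogonal decomposition $\K[X_n]_d = \qharm[q_0,d]\oplus\qhit[q_0,d]$ coming from positive-definiteness of the scalar product $\scalar{\cdot}{\cdot}$, and the specialization lemma \ref{lem.Specialization} applied to the finite-dimensional $\C(q)$-space $\qharm_d$.

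For the equivalence of the first two items I would simply subtract: both $(\harm_d,\hit_d)$ and $(\qharm[q_0,d],\qhit[q_0,d])$ are orthogonal decompositions of $\K[X_n]_d$, so that $\dim \harm_d + \dim \hit_d = \dim \K[X_n]_d = \dim \qharm[q_0,d] + \dim \qhit[q_0,d]$, and an equality of dimensions on one side immediately transfers to the other. This direction is essentially formal and uses nothing beyond the standard scalar product.

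For the equivalence between (i) and (iii) I would invoke Lemma \ref{lem.Specialization} to produce a basis $B(q)$ of $\qharm_d$ with polynomial coefficients in $q$, whose specialization $B(q_0)$ is $\C$-linearly independent. By Proposition \ref{prop.harm.specialization}(a) each element of $B(q_0)$ is $q_0$-harmonic, so the specialized span $V := \operatorname{span}_\C B(q_0)$ sits inside $\qharm[q_0,d]$ and has $\dim V = \dim_{\C(q)} \qharm_d$. Condition (iii) says exactly that $V = \qharm[q_0,d]$; since the containment $V \subseteq \qharm[q_0,d]$ is automatic, this is equivalent to $\dim \qharm[q_0,d] = \dim V$. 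Under the hypothesis that Conjecture \ref{conj.main} holds generically one has $\dim_{\C(q)} \qharm_d = \dim \harm_d$, hence $\dim V = \dim \harm_d$, and the equality above becomes (i).

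The main (minor) obstacle is that the statement as written does not explicitly assume the conjecture holds for $q$ formal, yet the implication (iii)$\Rightarrow$(i) does rely on the generic equality $\dim_{\C(q)} \qharm_d = \dim \harm_d$. Following the pattern of Proposition \ref{prop.goingdown}, I would make this hypothesis explicit in the statement; once this is done, everything reduces to the orthogonality argument and to a careful application of Lemma \ref{lem.Specialization}, with no further difficulty.
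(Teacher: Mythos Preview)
The paper does not actually prove this proposition: it lives inside a \texttt{private} block that is excised from the compiled document, and the accompanying proof environment contains only a hidden ``TODO'' comment. So there is no argument in the paper to compare against.

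Your own argument is sound. The equivalence (i)$\iff$(ii) follows immediately from the orthogonal splittings $\K[X_n]_d=\qharm[q_0,d]\oplus\qhit[q_0,d]$ and $\K[X_n]_d=\harm_d\oplus\hit_d$, exactly as you say. For (i)$\iff$(iii), your use of Lemma~\ref{lem.Specialization} together with Proposition~\ref{prop.harm.specialization}(a),(c) is the natural route: the specialized space $V$ has $\dim V=\dim_{\C(q)}\qharm_d$ and sits inside $\qharm[q_0,d]$, so (iii) amounts to $\dim\qharm[q_0,d]=\dim_{\C(q)}\qharm_d$.

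You are also right to flag the gap: the stated ``i.e.'' clause in item~(i), and hence the equivalence (i)$\iff$(iii), silently uses $\dim_{\C(q)}\qharm_d=\dim\harm_d$, i.e.\ Conjecture~\ref{conj.main} for $q$ formal. Without that hypothesis one only gets that (iii) is equivalent to $\dim\qharm[q_0,d]=\dim_{\C(q)}\qharm_d$, which a priori need not equal $\dim\harm_d$. Given that the proposition is a private note and the proof was left as TODO, it seems likely the authors intended this extra hypothesis (as in Proposition~\ref{prop.goingdown}) and simply had not yet written it down; your suggested fix is the correct one.
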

\begin{proof}
  \commentaire{TODO}
\end{proof}

\begin{conjecture}
  All of the above are true for $n$ big enough with respect to $d$.
\end{conjecture}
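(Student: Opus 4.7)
The plan is to prove the three equivalent conditions by establishing condition (2), $\dim \qhit[q_0,d] = \dim \hit_d$, uniformly in $q_0 \in \C$, under the hypothesis $n \geq d$ (which is what "$n$ big enough with respect to $d$" should mean). The strategy rests on two ingredients already developed in the paper: the stabilization $\qsteen(X_n)_{\leq d} = \tqsteen(X_n)_{\leq d}$ for $n \geq d$, and the triangularity of lemma \ref{lemma.triangularity}, which is a $q$-independent statement.

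First I would secure the lower bound $\dim \qharm[q_0,d] \geq \dim \harm_d$. For $n \geq d$, every partition $\lambda \vdash k$ with $k \leq d$ satisfies $\length(\lambda) \leq d \leq n$, so the full algebra and the truncated algebra agree in all degrees relevant to cutting out $\qharm_d$ and $\qhit_d$. Hence $\qharm_d = \tqharm_d$ and $\qhit_d = \tqhit_d$ in the sense used in the truncated theorem. For $q$ formal, that theorem gives $\dim \qharm_d = \dim \tqharm_d = \dim \harm_d$. By proposition \ref{prop.harm.specialization}, $\qharm$ specializes injectively to a subspace of $\qharm[q_0]$, producing the inequality $\dim \qharm[q_0,d] \geq \dim \harm_d$.

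The reverse inequality $\dim \qhit[q_0,d] \geq \dim \hit_d$ is the crux. I would lift a basis of $\hit_d$ to $\qhit[q_0]$ as follows: fix a basis of $\harm$ in each degree, and for every pair $(\lambda,h)$ with $\lambda \vdash k$, $h \in \harm_{d-k}$, $k \geq 1$, form the vector $\P\lambda . h \in \qhit[q_0,d]$. At $q = 0$ these specialize to $p_\lambda \cdot h$, which span $\hit_d$ (since $\C[X_n]$ is a free $\sym(X_n)$-module generated by $\harm$). By lemma \ref{lemma.triangularity} (applied to $\P\lambda$ acting on $1$, and then to $\P\lambda h$ via a degree-filtration argument), the matrix of the family $\{\P\lambda.h\}$ against the basis $\{p_\mu h'\}$ of classical hits should be $\C[q]$-triangular with units on the diagonal, in particular nonvanishing at every $q_0 \in \C$. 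Lemma \ref{lem.RankOfSpecialization} then promotes the generic rank to the specialized rank, giving $\dim \qhit[q_0,d] \geq \dim \hit_d$ for any $q_0$. Combined with the lower bound on $\qharm[q_0,d]$ and the identity $\dim \qhit[q_0,d] + \dim \qharm[q_0,d] = \dim \C[X_n]_d$, this forces equality, yielding condition (2) and therefore the conjecture.

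The main obstacle is the triangularity claim for $\P\lambda.h$ rather than just $\P\lambda.1$. Lemma \ref{lemma.triangularity} handles the polynomial part cleanly, but $\P\lambda$ acting on a non-constant harmonic $h$ introduces derivative terms whose contributions are not immediately arranged in a triangular pattern with respect to the $\{p_\mu h'\}$ basis of $\hit$. I expect this to be manageable by choosing the right order (say, lexicographic on the pair (shape of $h'$, partition $\mu$) combined with a degree filtration on the harmonic factor), so that the "leading" contribution of $\P\lambda.h$ is indeed $p_\lambda \cdot h$ plus strictly smaller terms. Verifying this triangularity carefully, independent of $q$, is the one substantive computation required; once it is in hand, the rest of the argument is formal and applies uniformly in $q_0$.
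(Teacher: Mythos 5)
This statement is left open in the paper: it is a conjecture, the equivalence proposition just above it carries no written proof, and the remark immediately following it records apparent counterexamples, so there is no argument of the authors to compare with and your proposal must stand on its own. It does not, and the failure is exactly at the step you flag yourself: the claim that the family $\{\P\lambda.h\}$ ($\lambda$ a partition, $h$ a classical harmonic) is triangular over $\C[q]$ with \emph{unit} diagonal against the basis $\{p_\mu h'\}$ of $\hit_d$. Lemma~\ref{lemma.triangularity} controls only $\P\lambda.1$, because on the constant $1$ every term $q\,x_i^{k+1}\p_i$ dies; when $\P\lambda$ acts on a nonconstant harmonic $h$ the derivative terms contribute at the same leading order with genuinely $q$-dependent coefficients (already in one variable $\D k.x^{[d]}=(1+q(d-k))\,x^{[d-1]}$), so the relevant transition determinant is a nonconstant polynomial in $q$ whose roots are precisely the problematic specializations. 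Lemma~\ref{lem.RankOfSpecialization} then yields $\dim\qhit[q_0,d]\geq\dim\hit_d$ only away from those roots; this is exactly why the paper, in Proposition~\ref{prop.goingdown}, treats the inequality $\dim\qharm[q_0]\leq\dim\harm$ as \emph{equivalent} to Conjecture~\ref{conj.main} rather than as something obtainable by a specialization argument. The lower bound $\dim\qharm[q_0,d]\geq\dim\harm_d$ for $d\leq n$ is indeed the easy half (truncated theorem plus Proposition~\ref{prop.harm.specialization}); the reverse inequality is the entire difficulty and your argument hides it in the unverified triangularity.

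Worse, the conclusion you aim at --- equality for every $q_0\in\C$ under the threshold $n\geq d$ --- is false, so no repair of that step can succeed. Take $n=d=2$ and $q_0=-1=-2/d$: by the two-variable analysis in the paper, $(x_1-x_2)(x_1+x_2)$ is $(-1)$-harmonic of degree $2$, while $\hilb(\harm(X_2))=1+t$, so $\dim\harm_2=0$ and $\dim\qharm[-1,2]\geq 1>\dim\harm_2$ although $n\geq d$. The remark after the conjecture also reports apparent counterexamples for $d=3$ and $n\leq 4$, i.e.\ with $n>d$. Hence ``big enough'' cannot mean $n\geq d$, and any correct proof must either exclude the bad values $q_0=-a/b$ (as Conjecture~\ref{conj.main} does) or use largeness of $n$ in an essential, non-uniform way; an argument that is uniform in $q_0$, as yours is designed to be, proves too much and is therefore necessarily broken at the unit-diagonal claim.
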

There seems to be counter examples for $d=2$ and $n\leq 2$, for $d=3$
and $n\leq 4$, ...  \commentaire{Double check this!}
\end{private}

\section{Changing the number of variables}

\begin{private}
  Most propositions of this section could be expressed in the following more
  general setting. Let $A$ be a graded connected (lie-)algebra with a graded
  action on the polynomials on one variable $\K[x]$.  Then, this action
  extends naturally on a symmetric action on $\K[X_n]\isom \K[x]^{\otimes n}$,
  as well as on $\K[X]$ (if $A$ acts on a vector space $V$, and $p$ is a
  primitive element of $A$, then $p$ acts on $V\otimes V$ by $p\otimes 1 +
  1\otimes p$).

  \begin{problem}
    Let $A$ be a graded connected algebra, generated by elements $\P
    k$ of degree $k$ such that
    $$
    [\P k,\P l] = \lambda_{i,j} \P {k+l}
    $$
    for all $k,l$. When are the harmonics for the action of $A$ on
    $\K[X_n]$ a regular representation of the symmetric group $\sg_n$?
  \end{problem}

  \begin{rem}
    Since the descending operators are alphabets derivation, the
    product of two harmonics $h(X)$ and $f(Y)$ is harmonic whenever
    $X$ and $Y$ are disjoint.
  \end{rem}

  \begin{rem}
    Let $p$ be a polynomial of degree $d$, and whose support is of
    size $n$.
    
    If $p$ is harmonic on $k$ variables, it is harmonic for any higher
    number of variables.
    
    If $p$ is hit on $k$ variables, it is hit on any lower number of
    variables.  If moreover $k\geq n+d$ then $p$ is hit for any
    (possibly infinite) number of variables.
  \end{rem}
  Can we improve on this bound $n+d$ ?
\end{private}

In the preceding section, we proved that conjecture~\ref{conj.main}
holds when the number of the variable is greater than the degree. The
goal here is to relate such $q$-harmonics on a large number of
variable with $q$-harmonics on fewer variables, trying to take down as
much information as possible. In particular we prove the equivalence
$(a)\iff(c1)$ of proposition~\ref{prop.goingup}.

\begin{prop}
  Let $X$ be a set of variables and $y\notin X$ be another variable.
  Let us denote by $\qharm(X,y)_{\deg(y)\leq d}$ the subspace of
  $\qharm(X,y)$ of polynomials of degree at most $d$ in the variable
  $y$\commentaire{which are harmonic for the alphabet $X\cup\{y\}$}.
  The isomorphism
  \begin{equation}
    \pi_{y^d} \ :
    \left\{
      \vcenter{\xymatrix@C=0mm@R=0mm{
        *{\K(X,y)_{\deg(y)\leq d}/\K(X,y)_{\deg(y)\leq d-1}}  
             & \ar@{^{(}->>}[rr] & *{\hskip1cm} & & *{\K(X)}\\
        *{f} & \ar@{|->}[rr]     &              & & *{\coeff(f,y^d)}
        }}
    \right.
  \end{equation}
  \commentaire{hook two head arrow ?}  
  defines an injective mapping :  
  \begin{equation}
    \label{eq.InjHarm}
    \tilde\pi_{y^d} \ :\
       \qharm(X,y)_{\deg(y)\leq d}/\qharm(X,y)_{\deg(y)\leq d-1}
       \hookrightarrow \qharm(X)\,.
  \end{equation}
\end{prop}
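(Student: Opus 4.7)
The plan is to exploit the additive decomposition of the operators $\P k$ on the disjoint alphabet $X \cup \{y\}$ and to extract the coefficient of $y^d$ from the harmonic equations.

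Write $f \in \qharm(X,y)_{\deg(y) \leq d}$ as $f = \sum_{j=0}^d y^j f_j$ with $f_j \in \K[X]$, so that $\pi_{y^d}(f) = f_d$. Since $X$ and $\{y\}$ are disjoint alphabets, the operator $\P k$ on $X \cup \{y\}$ splits additively as
\[
  \P k \;=\; \P k(X) \;+\; y^k(1+qy\p_y),
\]
and a routine computation using the anti-morphism property $(fg)^* = g^*f^*$ yields
\[
  \D k \;=\; \D k(X) \;+\; (1+qy\p_y)\,\p_y^k.
\]
The first summand involves only the variables $X$ and the corresponding partial derivatives, so it preserves the $y$-degree of any polynomial. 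The second summand begins with $\p_y^k$, $k \geq 1$, followed by $(1+qy\p_y)$ which preserves $y$-degree; hence it strictly decreases the $y$-degree by exactly $k$.

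Extracting the coefficient of $y^d$ from the harmonic equation $\D k . f = 0$, only the first summand can contribute, yielding
\[
  \D k(X) . f_d \;=\; 0 \qquad \text{for all } k \geq 1,
\]
so $f_d \in \qharm(X)$. This proves that $\pi_{y^d}$ restricts to a map $\qharm(X,y)_{\deg(y) \leq d} \to \qharm(X)$, and that $\qharm(X,y)_{\deg(y) \leq d-1}$ lies in its kernel, making $\tilde\pi_{y^d}$ well-defined. Injectivity is then automatic: if $f_d = 0$ then $\deg_y(f) \leq d-1$, and since $f$ remains $q$-harmonic, $f \in \qharm(X,y)_{\deg(y) \leq d-1}$, i.e. $f$ vanishes in the quotient. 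No step is genuinely hard; the only delicate point is keeping track of how the $y$-part of $\D k$ strictly drops the $y$-degree, which is immediate once the decomposition above is in hand.
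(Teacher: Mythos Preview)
Your proof is correct and follows essentially the same approach as the paper: both use the additive splitting $\D k(X,y) = \D k(X) + (1+qy\p_y)\p_y^k$ and extract the coefficient of $y^d$ to see that the top $y$-coefficient is $q$-harmonic in $X$. Your write-up is a bit more explicit about the injectivity and about why the $y$-part of $\D k$ cannot contribute in top $y$-degree, but the argument is the same.
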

\begin{proof}
  The injectivity of $\tilde\pi_{y^d}$ follows from the definition.
  The only thing to prove is that the image $\tilde\pi_{y^d}$ of a
  $q$-harmonic polynomial of degree $d$ in $\K[X,y]$ is in
  $\qharm(X)$. It comes from the fact that the $\P k^*$ are primitive,
  that is
  \begin{equation}
    \D k(X,y) = \D k(X) + \D k(y)\,.
  \end{equation}
  \begin{equation}
    \left(
      \sum_{x\in X} (1+qx\partial_x)\partial_x^k
      + (1+qy\partial_y)\partial_y^k
    \right). f = 0\,.
  \end{equation}
  Write $f = f_0y^d + f_1$ with $\deg_y(f_1)<d$. The identification of
  the homogeneous components of degree $d$ in $y$ in the preceding
  equation proves that
  \begin{equation}
    \left(
      \sum_{x\in X} (1+qx\partial_x)\partial_x^k
    \right). f_0 = 0\,.
  \end{equation}
  This ends the proof. 
\end{proof}

\bigskip

The following proposition yields as corollary the equivalence
$(a)\Rightarrow (c1)$ and by an easy induction the implication
$(a)\Rightarrow (c2)$ of proposition~\ref{prop.goingdown}.
\begin{prop}
  \label{prop.equivShortString}
  Let $q$ be generic. Then, the following are equivalent
  \begin{itemize}
  \item[(a)] Conjecture~\ref{conj.main} holds;
  \item[(b)] For all $n$, any $q$-harmonic on $n$ variables has degree
    at most $n-1$ on each variable;
  \item[(c)] For all $n$, the mapping
    \begin{equation}
      \tilde\pi_{y^d} \ :\
      \qharm(X_n,y)_{\deg(y)\leq d}/
      \qharm(X_n,y)_{\deg(y)\leq d-1}
      \hookrightarrow \qharm(X_n)\,,
    \end{equation}
    is an isomorphism for $d\leq n$. For $d > n$,
    $\qharm(X_n,y)_{\deg(y) = d} = \{0\}$.
  \end{itemize}
%
\end{prop}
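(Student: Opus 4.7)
The plan is to prove the equivalence via the cyclic chain $(a)\Rightarrow(c)\Rightarrow(b)\Rightarrow(a)$, using the injection $\tilde\pi_{y^d}\colon V_d \hookrightarrow \qharm(X_n)$ from the preceding proposition as the main input, where $V_d := \qharm(X_n,y)_{\deg(y)\leq d}/\qharm(X_n,y)_{\deg(y)\leq d-1}$. Since $\tilde\pi_{y^d}$ extracts the coefficient of $y^d$ and so lowers total degree by $d$, one has $\hilb(V_d) \leq t^d \hilb(\qharm(X_n))$. Filtering $\qharm(X_n,y)$ by $y$-degree and summing the graded pieces produces the master inequality
\begin{equation*}
\hilb(\qharm(X_{n+1})) \;=\; \sum_{d\geq 0}\hilb(V_d) \;\leq\; \sum_{d\geq 0} t^d \hilb(\qharm(X_n))\,,
\end{equation*}
which I will exploit at every step.

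\textbf{The equivalence $(a)\iff(c)$.} Assume (a). Then $\hilb(\qharm(X_m)) = \hilb(\harm(X_m)) = \prod_{i=1}^m (1-t^i)/(1-t)$ for all $m$, so the LHS of the master inequality equals $(1+t+\cdots+t^n)\hilb(\qharm(X_n)) = \sum_{d=0}^n t^d\hilb(\qharm(X_n))$. Since each summand on the RHS is nonnegative and bounds the corresponding $\hilb(V_d)$, the matching forces $\hilb(V_d) = 0$ for $d > n$ and $\hilb(V_d) = t^d\hilb(\qharm(X_n))$ for $d\leq n$; the dimension identity in every graded component promotes $\tilde\pi_{y^d}$ to an isomorphism, yielding (c). Conversely, (c) immediately gives $\hilb(\qharm(X_{n+1})) = (1+t+\cdots+t^n)\hilb(\qharm(X_n))$, and induction from $\hilb(\qharm(X_0)) = 1$ recovers the Hilbert series of $\harm(X_n)$; the graded $\sg_n$-module isomorphism in (a) then follows by specializing at $q=0$ via the lemmas of Section~\ref{sec.Specializations}.

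\textbf{The implications involving (b).} The direction $(c)\Rightarrow(b)$ is immediate: $V_d = 0$ for $d > n$ says any $q$-harmonic on $X_n\cup\{y\}$ has $y$-degree at most $n$, and the symmetric roles of $y$ and every $x_i$ in the operators $\D k$ propagate the bound to every variable. The hard part is $(b)\Rightarrow(a)$. Its easy half is that (b) truncates the master inequality to $\hilb(\qharm(X_{n+1})) \leq (1+t+\cdots+t^n)\hilb(\qharm(X_n))$, and induction produces only an upper bound $\hilb(\qharm(X_n)) \leq \hilb(\harm(X_n))$. To upgrade to equality, I would either appeal to the dimension criterion (a)$\iff$(b) of Proposition~\ref{prop.goingup}, or carry out a direct construction of \emph{strings}: for each $g \in \qharm(X_n)$ and each $d\in\{0,1,\ldots,n\}$, a harmonic lift $f_d \in \qharm(X_n,y)$ with $\pi_{y^d}(f_d) = g$. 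Writing $f_d = y^d g + h$, harmonicity imposes the cascade
\begin{equation*}
\D k.h \;=\; -\frac{d!}{(d-k)!}\bigl(1 + q(d-k)\bigr)\, g\, y^{d-k}\qquad(k=1,\ldots,d)\,,
\end{equation*}
reducing the problem to iteratively inverting $\D 1$ on appropriate complements of lower $y$-degree --- the crux of the forthcoming string construction, and the genuine obstacle to a self-contained proof.
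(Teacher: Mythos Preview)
Your treatment of $(a)\Rightarrow(c)$ and $(c)\Rightarrow(b)$ is essentially the paper's, and your direct $(c)\Rightarrow(a)$ via the Hilbert-series recursion plus the specialization remarks is a legitimate shortcut.

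The gap is in $(b)\Rightarrow(a)$. Your ``easy half'' is correct but empty: the inequality $\hilb(\qharm(X_n))\leq\hilb(\harm(X_n))$ is already known unconditionally from specialization at $q=0$ (Proposition~\ref{prop.harm.specialization}(d)), so you have not yet extracted anything from hypothesis~(b) beyond what was free. Your Option~1 does not close the gap: the equivalence (a)$\iff$(b) of Proposition~\ref{prop.goingup} says the conjecture is equivalent to $\dim\qharm\geq\dim\harm$, but you have only $\leq$; there is no route from the degree bound~(b) of the present proposition to the dimension lower bound~(b) of Proposition~\ref{prop.goingup} short of proving the conjecture itself (and indeed the equivalence (a)$\iff$(c1) there \emph{is} the present proposition). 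Your Option~2 is not carried out, and in fact the paper never constructs such lifts.

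What the paper actually does is use Corollary~\ref{cor.truncated}. From~(b) one has the recursive inequality $\hilb(\qharm(X_{n+1}))\leq(1+t+\cdots+t^n)\hilb(\qharm(X_n))$. Suppose equality with $\hilb(\harm)$ fails: pick $n_0$ and the minimal degree~$d$ where $\hilb(\qharm(X_{n_0+1}))_d<\hilb(\harm(X_{n_0+1}))_d$. Iterating the recursive inequality from $n_0+1$ up to $d+1$ variables, and using that each factor $(1+t+\cdots+t^m)$ has nonnegative coefficients while every graded piece of $\qharm$ is bounded above by that of $\harm$, the strict defect in degree~$d$ persists: $\hilb(\qharm(X_{d+1}))_d<\hilb(\harm(X_{d+1}))_d$. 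But now $d\leq d+1-1$, so we are in the low-degree range where Corollary~\ref{cor.truncated} gives equality, a contradiction. This is the missing idea: hypothesis~(b) lets you \emph{propagate} a hypothetical defect to a regime with enough variables that the truncated result of Section~5 kills it.
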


\begin{proof}
  $(c)\Rightarrow (b)$ is obvious.
  \medskip
  
  $(a)\Rightarrow (c)$: For any degree $d$, the injectivity of
  $\tilde\pi_{y^d}$ implies that
  \begin{multline}
      \label{eq.harmInjective}
      \hilb(\qharm(X_{n},y)_{\deg(y)\leq
        d}/\qharm(X_{n},y)_{\deg(y)\leq d-1})\\
      \leq t^d \hilb(\qharm(X_{n})),
  \end{multline}
  with equality if, and only if, $\tilde\pi_{y^d}$ is an isomorphism.
  On the other hand, we assumed that conjecture~\ref{conj.main} holds.
  Identifying $y$ with $x_{n+1}$, it follows that
  \begin{equation}
    \label{eq.harmSeries}
    \hilb(\qharm(X_{n+1})) = (1+t+\dots+t^{n})\hilb(\qharm(X_{n})).
  \end{equation}
  If for some $d\leq n-1$, the inequality in
  equation~\ref{eq.harmInjective} above is strict, the terms of degree
  $d$ in equation~\ref{eq.harmSeries} will differ. Then, by a similar
  reasoning, the existence of a non-trivial component
  $\qharm(X_{n},y)_{\deg(y)\leq
    d}/\qharm(X_{n},y)_{\deg(y)\leq d-1})$ for $d\geq n$ would
  contradict equation~\ref{eq.harmSeries}.
  \medskip
  
  $(b)\Rightarrow (a)$: By the assumption, and the injectivity of
  $\tilde\pi_{x_n^d}$, for any $n$,
  \begin{equation}
    \label{eq.shortstrings}
    \hilb(\qharm(X_{n+1})) \leq (1+t+\dots+t^{n})\hilb(\qharm(X_{n})).
  \end{equation}
  By induction 
  \begin{equation}
    \hilb(\qharm(X_{n+1})) \leq (1+t) (1+t+t^2)
    \cdots(1+t+\dots+t^{n})
    = \hilb(\harm).
  \end{equation}
  Suppose now that, for a given $n_0$, this inequality is strict:
  \begin{equation}
    \hilb(\qharm(X_{n_0+1})) < (1+t) (1+t+t^2)\cdots (1+t+\dots+t^{n_0}).
  \end{equation}
  Let $d$ be the valuation of the difference. Applying again
  induction, we obtain that
  \begin{equation}
    \hilb(\qharm(X_{d+1})) < (1+t) (1+t+t^2)\cdots (1+t+\dots+t^{d}),
  \end{equation}
  with the inequality still appearing in degree $d$. However, the
  number of variables now exceeds $d$, and this is in contradiction
  with Corollary~\ref{cor.truncated}.
\end{proof}


\subsection{Strings}

We introduce a little bit more formalism to analyze when $q$-harmonic
polynomials in $\K[X_{n+1}]$ have variables of degree at most $n$.

\begin{private}
  Truncated -> Non truncated: show that there is no string of length
  $\geq n$; provide a straightening algorithm to rewrite $P.p$ for
  $P$.
\end{private}

\begin{prop}
  \label{prop.String}
  Consider $f\in\K[X,y]$, and write its expansion on the variable $y$
  as
  \begin{equation}
    f(X,y) = \sum_{i\geq 0} \frac1{i!}f_i(X) \, y^i\,.
  \end{equation}
  Then, $f$ is $q$-harmonic if, and only if, for any $i\geq 0$ and
  $k\geq1$,
  \begin{equation}
    \D k.f_i = - (1+qi)\, f_{i+k} \,.
  \end{equation}
\end{prop}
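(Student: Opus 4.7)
The plan is to exploit the primitivity $\D k(X,y) = \D k(X) + \D k(y)$ already used earlier, reduce the harmonicity equation to an identity of power series in $y$, and match coefficients of each power of $y$.

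First I would compute the one-variable action $\D k(y).y^i$ explicitly. Since $\D k(y) = (1+qy\partial_y)\partial_y^k$, one has $\partial_y^k.y^i = \frac{i!}{(i-k)!}\, y^{i-k}$ (vanishing if $i<k$), and then $y\partial_y$ multiplies $y^{i-k}$ by $i-k$, so
\begin{equation*}
  \D k(y).y^i = \frac{i!}{(i-k)!}\,(1+q(i-k))\, y^{i-k}.
\end{equation*}
Since $\D k(X)$ does not touch $y$, applying $\D k(X,y)$ to $f = \sum_{i\geq 0}\frac{1}{i!}f_i(X)y^i$ gives
\begin{equation*}
  \D k(X,y).f
  = \sum_{i\geq 0}\frac{1}{i!}\bigl(\D k(X).f_i\bigr)\,y^i
  + \sum_{i\geq k}\frac{1}{(i-k)!}(1+q(i-k))\,f_i\,y^{i-k}.
\end{equation*}

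The second step is the reindexation $j := i-k$ in the second sum, turning it into $\sum_{j\geq 0}\frac{1}{j!}(1+qj)\,f_{j+k}\,y^j$. Collecting coefficients of $\frac{y^i}{i!}$ yields
\begin{equation*}
  \D k(X,y).f = \sum_{i\geq 0}\frac{1}{i!}\bigl(\D k(X).f_i + (1+qi)\,f_{i+k}\bigr)\,y^i.
\end{equation*}
Since the family $(y^i)_{i\geq 0}$ is linearly independent over $\K[X]$, the vanishing of $\D k.f$ is equivalent to the vanishing of every coefficient, which is exactly $\D k.f_i = -(1+qi)\,f_{i+k}$.

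Finally, I would note that $f$ is $q$-harmonic iff $\D k.f = 0$ for every $k\geq 1$ (recall $\qsteen$ is generated by the $\P k$, $k\geq 1$, and the harmonicity conditions reduce to the generators by the remark in Section~\ref{subsection.HitsHarmonics}). Combined with the coefficient identification above, this gives the stated equivalence in both directions. There is no real obstacle here: the only care needed is the bookkeeping of the reindexation which is responsible for the factor $(1+qi)$ rather than $(1+q(i-k))$ appearing in the stated relation.
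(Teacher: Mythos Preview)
Your proof is correct and follows essentially the same route as the paper: split $\D k(X,y)=\D k(X)+\D k(y)$, compute the one-variable action, reindex the second sum, and identify coefficients of $y^i$. If anything, you are slightly more careful than the paper in spelling out the converse direction and in remarking that harmonicity reduces to the vanishing of all $\D k.f$, $k\geq1$.
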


\begin{proof}
  Since $f$ is $q$-harmonic, $\D k.f=0$. Hence,
  \begin{equation}
    \sum_{i\geq 0} \frac1{i!} \, (\D k.f_i)  y^i +
    \sum_{i\geq 0} \frac1{i!} \, f_i \,(\D k.y^i)
    =0\,,
  \end{equation}
  and then
  \begin{equation}
    \sum_{i\geq 0} \frac1{i!}     \, (\D k.f_i)        \, y^i +
    \sum_{i\geq k} \frac1{(i-k)!} \, f_{i} \, (1+q(i-k)) \, y^{i-k}
    =0\,.
  \end{equation}
  Setting $j:=i-k$ in the second sum, it follows that
  \begin{equation}
    \sum_{i\geq 0} \frac1{i!} \, (\D k.f_i)      \, y^i +
    \sum_{j\geq 0} \frac1{j!} \, f_{j+k} \, (1+qj) \, y^j
    =0\,.
  \end{equation}
  Equating the coefficients of $y$ yields the desired equality, for
  all $i$.
\end{proof}

A remarkable consequence of proposition~\ref{prop.String} is that a
$q$-harmonic polynomial $f$ can be reconstructed from its tail $f_0$.
This motivates the following definition:
\begin{defn}
  Let $g\in\K(X)$ be a homogeneous polynomial. The \emph{string}
  generated by $g$ is the sequence $F = \strng(g) := (f_i)_{i\geq 0}$
  defined by
  \begin{equation}
    \label{eq.defn.string}
    f_0 := g,
    \qquad\text{ and for all $i>0$, }\quad
    f_i := -\D i .g\,.
  \end{equation}
  The \emph{length} of the string $F=(f_i)$ is defined by
  \begin{equation}
    \length(F) := 1 + \max\{i\ |\ f_i\neq 0\}. 
  \end{equation}
  The \emph{head} of the string is $f_{\length(f)}$.  
\end{defn}
The length of a string is nothing but the degree in the variable $y$
of the associated polynomial $f(X,y) := \sum \frac1{i!}f_i(X) \, y^i$.
Obviously, if $g$ is of degree $d$ then $f_i=0$ for all $i>d$,
consequently $\length(\strng(g))<\deg(g)$ and the length is well
defined.

\begin{defn}
  A string $F=(f_i)_i$ is called \emph{harmonic} if
  \begin{equation}
    \D k . f_i = - (1+qi)\, f_{i+k}, \qquad\text{for all $i,k$}\,.
  \end{equation}
\end{defn}

Thanks to the relations in the $q$-Steenrod algebra, it is sufficient
in the above definition to check that $\D 1$ acts properly:
\begin{prop}
  Let $F=(f_i)_i$ a string. Then $F$ is harmonic if, and only if,
  \begin{equation}
    \D 1 . f_i = - (1+qi)\, f_{i+1}, \qquad\text{for all $i$}\,.
  \end{equation}
\end{prop}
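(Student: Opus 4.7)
The forward direction is trivial (take $k=1$). For the converse, set $g:=f_0$; the string definition gives $f_i=-\D i.g$ for $i\geq 1$, the $i=0$ part of the harmonic condition is automatic, and for $i\geq 1$ the claim reduces to
\begin{equation*}
A(k,i)\colon\quad \D k\D i.g=-(1+qi)\D{i+k}.g, \qquad k,i\geq 1.
\end{equation*}
The hypothesis is exactly $A(1,i)$ for all $i\geq 1$. The main tool is the dualized commutation relation $[\D l,\D k]=q(l-k)\D{k+l}$, obtained from Proposition~\ref{eq.commut} by taking adjoints (since $*$ is an antimorphism).

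The plan is induction on $k$, with a preliminary reflection and a delicate base case at $k=2$. First, applying $\D i\D 1-\D 1\D i=q(i-1)\D{i+1}$ to $g$ and substituting $A(1,i)$ immediately yields the transposed statement $A(i,1)\colon\D i\D 1.g=-(1+q)\D{i+1}.g$ for $i\geq 1$. The main obstacle is that $\D 2$ cannot be expressed as a Lie bracket involving only $\D 1$ (since $[\D 1,\D 1]=0$), so the case $k=2$ must be handled as a separate base, using extra information extracted from the string structure itself.

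For that base case, $A(1,1)$ reads $\D 1\D 1.g=-(1+q)\D 2.g$, hence $\D 2.g=-\frac{1}{1+q}\D 1\D 1.g$. Expanding $\D i\D 1\D 1.g$ via $\D i\D 1=\D 1\D i+q(i-1)\D{i+1}$ and invoking $A(1,\cdot)$ and $A(i+1,1)$, a short telescoping computation gives $\D i\D 1\D 1.g=(1+q)(1+2q)\D{i+2}.g$; combined with $\D 2\D i=\D i\D 2+q(2-i)\D{i+2}$, this delivers $A(2,i)$ for all $i\geq 1$. For the inductive step $k\geq 2\Rightarrow k+1$, I would use the identity $\D{k+1}=\frac{1}{q(k-1)}(\D k\D 1-\D 1\D k)$ (legitimate precisely when $k\geq 2$); applying this to $\D i.g$ rewrites $\D{k+1}\D i.g$ as a combination of four terms handled by $A(k,i)$, $A(k,i+1)$, $A(1,i)$ and $A(1,i+k)$, and after cancellation of the prefactor $q(k-1)$ the expression collapses cleanly to $-(1+qi)\D{i+k+1}.g$. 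The argument tacitly requires $q\neq 0$ and $1+q\neq 0$, both automatic for $q$ formal or generic.
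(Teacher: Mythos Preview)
Your argument is correct and rests on the same idea as the paper's: exploit the commutation relations among the $\D k$ and induct. The paper is terse: it records the swap identity
\[
\D k . f_l \;=\; -q(l-k)\,f_{k+l} + \D l . f_k
\]
(equivalently $A(k,l)\Leftrightarrow A(l,k)$) and leaves the induction to the reader. Your choice to induct on the \emph{first} index $k$ is what forces the separate $k=2$ base case, since $\D 2$ is not a bracket of copies of $\D 1$. A cleaner route, and likely the one the paper has in mind, is to use the swap identity to obtain $A(k,1)$ for all $k$ and then induct on the \emph{second} index $l$: from $(1+ql)f_{l+1}=-\D 1.f_l$ one computes $\D k.f_{l+1}$ by applying $\D k$ and commuting it past $\D 1$, invoking only $A(k,l)$, $A(k{+}1,l)$ and $A(1,k{+}l)$ from the induction hypothesis and the assumption. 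This handles all $k$ uniformly and dispenses with the $k=2$ detour, at the price of dividing by $1+ql$ rather than by $q$ and $1+q$ as you do; either way some nondegeneracy of $q$ is used, which you correctly flag.
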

\begin{proof}
  This is a simple consequence of the commutation relation. 
  Namely, we have
  \begin{equation}
    \begin{split}
      \D k . f_l = -\D k\D l.f_0 &= q(l-k)\D{(k+l)}.f_0 -\D l\D k.f_0\\
                                 &= -q(l-k) f_{k+l} + \D l f_k\,.
    \end{split}
  \end{equation}
  The proposition follows by an easy induction. 
\end{proof}
For example,
$$
\D2 . f_1 = -\D2\D1 . f_0 = -q\D3 . f_0 - \D1\D2 . f_0
          = qf_3 -(1+2q) . f_3 = -(1+q) f_3\,.
$$

Here are some basic properties of harmonic strings:
\begin{prop}
  Let $F=(f_i)_i$ be an harmonic string. Then,
  \begin{itemize}
  \item[(a)] The head $f_{\length(F)}$ of the string is harmonic;
  \item[(b)] $f_i\ne 0$ whenever $0\leq i \leq \length(F)$ and $q\ne-1/i$.
  \end{itemize}
\end{prop}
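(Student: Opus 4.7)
Setting $L:=\length(F)$, the hypothesis on $F$ is that $f_L\neq 0$ is the head and $f_{L+k}=0$ for every $k\geq 1$, and the harmonic string identity
\begin{equation*}
  \D k . f_i \;=\; -(1+qi)\,f_{i+k}\qquad(i\geq 0,\ k\geq 1)
\end{equation*}
holds. Both items of the proposition will fall out of this single identity, with no computation beyond a substitution.

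For (a), the plan is to simply specialize the identity at $i=L$. By definition of the head, $f_{L+k}=0$ for every $k\geq 1$, so the right-hand side vanishes for every $k\geq 1$, regardless of the value of the scalar $1+qL$. Hence $\D k . f_L=0$ for all $k\geq 1$, which is precisely the definition of $f_L$ being $q$-harmonic.

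For (b), I would argue by contradiction. The case $i=L$ is immediate since $f_L\neq 0$ by definition of the head. Otherwise fix an index $i$ with $0\leq i<L$ and with $q\neq -1/i$ (a condition which, at $i=0$, reduces to the vacuous $1\neq 0$), and suppose towards a contradiction that $f_i=0$. Then $\D k . f_i = 0$ for every $k\geq 1$, so the harmonic string identity forces $(1+qi)\,f_{i+k}=0$. The hypothesis $q\neq -1/i$ ensures that $1+qi\neq 0$, and we may cancel it to obtain $f_{i+k}=0$ for every $k\geq 1$. Choosing $k:=L-i\geq 1$ gives $f_L=0$, contradicting the choice of $L$ as the index of the head.

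I do not expect any real obstacle: both parts are essentially a direct unwinding of the recursion defining a harmonic string. The only mild care required is the convention that the constraint $q\neq -1/i$ is automatically satisfied when $i=0$, which is what makes the argument for (b) go through uniformly down to the tail $f_0$.
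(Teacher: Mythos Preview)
Your proof is correct and follows essentially the same route as the paper. For (a) you specialize the harmonic string identity at $i=L$, exactly as the paper does; for (b) the paper gives the one-line direct form $\D{L-i}(f_i)=-(1+qi)f_L\neq 0$, while you phrase the contrapositive, but the content is identical.
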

\begin{proof} These are obvious consequences of the definitions: 
  for $k>0$, one has $\D k(f_{\length(F)})=f_{\length(F)+k}=0$ and
  $\D{\length(F)-i}(f_i)=-(1+qi) f_{\length(F)}\ne0$.
\end{proof}

\subsection{Length of the strings}

Proving that no $q$-harmonic polynomial in $\K[X_{n+1}]$ has variable
degree $\leq n$ is obviously equivalent to proving that all strings on
$\K[X_n]$ have length $\leq n-1$. The rest of this section aims at
some partial results in this direction.

\begin{private}
\begin{lemma}
  Let $q$ be generic or $q\in\R^+$. Let $p$ be a polynomial, and
  $(f_i)_i$ be a string such that $f_0\in\qsteen^+.p$. Then,
  $f_{l(f)}$ is orthogonal to $p$.
\end{lemma}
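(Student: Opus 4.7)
The plan is to transfer the orthogonality statement $\scalar{p}{f_{l(f)}}=0$ into the Weyl algebra through adjointness, and then exploit the length condition on the string.

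First, from the definition of the string one has $f_l=-\D l . f_0$ with $l:=l(f)$. Since $\D l=(\P l)^*$ is the Weyl-algebra adjoint of $\P l$, we rewrite
\begin{equation*}
  \scalar{p}{f_l}=-\scalar{p}{\D l . f_0}=-\scalar{\P l . p}{f_0}.
\end{equation*}
Because $f_0\in\qsteen^+.p$, we may write $f_0=h.p$ with $h\in\qsteen^+$, so the target scalar product becomes
\begin{equation*}
  \scalar{p}{f_l}=-\scalar{\P l . p}{h.p}=-\scalar{h^*\P l . p}{p},
\end{equation*}
and the claim reduces to showing that $h^*\P l . p$ is orthogonal to $p$ in $\K[X_n]$.

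Next, the length hypothesis $\length(\strng(f_0))=l$ translates to the vanishing identities $\D{l+j}.f_0=(\D{l+j}h).p=0$ for every $j\geq 1$, i.e., to nontrivial relations modulo the left annihilator $\operatorname{Ann}(p)\subset\weyl$. The plan is to combine these relations with the internal $q$-Steenrod commutation $[\P k,\P l]=q(l-k)\P{k+l}$ from Proposition \ref{eq.commut} (and its dual for the $\D$'s) in order to slide $\D l$ through the $\P$-factors of $h^*$. After this straightening, $h^*\P l . p$ should decompose into terms that either lie in a homogeneous component distinct from that of $p$ (hence are orthogonal to $p$ for degree reasons) or that factor through some $\D{l+j}h.p$ (hence vanish outright). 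A complementary route uses Proposition \ref{prop.String}: when the string is harmonic, the head $f_l$ is itself $q$-harmonic, and combined with $f_0\in\qhit$ this provides additional orthogonalities among the components of the string that can be leveraged inductively on $\length(h)$.

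The hypothesis that $q$ is generic or $q\in\R^+$ is used to guarantee that all the scalar factors $1+qi$ for $0\leq i\leq l$---which appear both in the propagation formula $\D k.f_i=-(1+qi)\,f_{i+k}$ and in the coefficients produced by iterated cross-commutators $[\D k,\P l]$---remain non-zero, so that each step of the rewriting is invertible and no rung of the string collapses. The main obstacle will be the Weyl-algebra bookkeeping: the cross-commutators $[\D k,\P l]$ do not stay inside $\qsteen$ but produce mixed polynomial-derivation terms, so the argument has to take place inside the larger associative subalgebra of $\weyl(X_n)$ generated by both the $\P k$'s and the $\D k$'s, with careful verification that every mixed term produced by the slides is absorbed by the identities $(\D{l+j}h).p=0$.
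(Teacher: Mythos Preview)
The paper's own proof of this lemma is \emph{empty}: the statement lives inside a \texttt{private} block that was excluded from the published version, and the \texttt{proof} environment contains nothing. So there is no argument to compare against; this was an unfinished tentative claim of the authors.

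Your proposal is not a proof but a plan, and you say so yourself (``The main obstacle will be the Weyl-algebra bookkeeping\dots''). The adjointness step
\[
  \scalar{p}{f_l}=-\scalar{\P l.p}{f_0}=-\scalar{h^*\P l.p}{p}
\]
is correct, but nothing after that is carried out. More importantly, the sketch cannot be completed, because in its natural reading the lemma is \emph{false}. Take $n=2$, $p=x_1-x_2$, and $f_0=\P1.p=(1+q)(x_1^2-x_2^2)\in\qsteen^+.p$. Then
\[
  f_1=-\D1.f_0=-2(1+q)^2(x_1-x_2),\qquad f_2=-\D2.f_0=0,
\]
so the last nonzero term is $f_1$, yet
\[
  \scalar{p}{f_1}=-2(1+q)^2\,\scalar{x_1-x_2}{x_1-x_2}=-4(1+q)^2\neq0
\]
for $q$ generic or $q\in\R^+$. (The same phenomenon already occurs with $n=1$, $p=1$, $f_0=x_1$.) Note that this string is even harmonic, so assuming harmonicity does not save the statement. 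Your ``complementary route'' also misfires: knowing that the head is $q$-harmonic gives $f_l\perp\qhit$, but the claim is $f_l\perp p$, and $p$ need not lie in $\qhit$.

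If one reads $l(f)$ literally as the paper's $\length(F):=1+\max\{i\mid f_i\neq0\}$, then $f_{l(f)}=0$ always and the lemma is vacuous; this is presumably not what was intended, and it explains why the authors left the proof blank and excised the lemma.
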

\begin{proof}
\end{proof}
\end{private}

\begin{prop}
  \begin{itemize}
  \item[(a)] Let $q=0$, and $F$ be a $0$-harmonic string on $\K[X_n]$.
    Then, the length of $F$ is at most $n$.
  \item[(b)] Let $q$ be formal, and $F$ be a $q$-harmonic string on
    $\K[X_n]$ of length $d$, so that $f_0$ is of the form $f_d.p$,
    where $p$ is symmetric. Then, the length of $F$ is at most $n$.
  \end{itemize}
\end{prop}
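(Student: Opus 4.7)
The plan is to use the dictionary from Proposition~\ref{prop.String}: a string $F=(f_i)_{i\geq 0}$ on $\K[X_n]$ is $q$-harmonic if and only if the associated polynomial
\[
  f(X_n,y)\;:=\;\sum_{i\geq 0}\frac{f_i(X_n)}{i!}\,y^i
\]
is $q$-harmonic in $\K[X_n,y]\cong\K[X_{n+1}]$, and in that correspondence $\length(F)$ matches the degree of $f$ in $y$.

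For part (a), the statement becomes the classical fact that harmonic polynomials in $n+1$ variables have degree at most $n$ in each single variable: at $q=0$ the $q$-Steenrod algebra acts by multiplication by the symmetric power sums, so the $0$-harmonics are precisely the usual $\sg_{n+1}$-coinvariant harmonics, and these are all partial derivatives $P(\p).\Delta(X_n,y)$ of the Vandermonde, which has degree exactly $n$ in $y$. Since derivation only lowers degrees, $\deg_y f\leq n$, whence $\length(F)\leq n$.

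For part (b), I reduce to part (a) by specializing at $q=0$, using the factorization $f_0=f_d\cdot p$ to control $q$-valuations. After clearing denominators, suppose $f_i(q)\in\K[q][X_n]$ for all $i$. The key observation is that
\[
  \D k \;=\; \sum_i \p_i^k\,+\,q\sum_i x_i\,\p_i^{k+1}
\]
has $q$-independent leading term $p_k^*=\sum_i\p_i^k$, so the relation $f_k=-\D k.f_0$ (valid for $k\geq 1$, since $1+q\cdot 0=1$) forces $v_q(f_k)\geq v_q(f_0)$ for every $k$. Thus the minimum $q$-valuation over the string is attained at the tail, and after dividing the whole string by $q^{v_q(f_0)}$ (a scaling that preserves the harmonic string relations) we may assume $v_q(f_0)=0$, i.e.\ $f_0(0)\neq 0$. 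Interpreting $p$ as a nonzero, $q$-independent symmetric polynomial gives $v_q(p)=0$, so the factorization $f_0=f_d\cdot p$ forces $v_q(f_d)=v_q(f_0)=0$ and hence $f_d(0)\neq 0$.

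Specializing at $q=0$ now yields a $0$-harmonic string $(f_i(0))_i$ whose head $f_d(0)$ is still nonzero, so the specialized string has length exactly $d$; part (a) then delivers $d\leq n$. The main subtle step is the $q$-valuation bookkeeping, which relies crucially both on the leading term of $\D k$ being $q$-free and on $p$ carrying no $q$-dependence; apart from that, the argument is a direct transfer from part (a) through specialization.
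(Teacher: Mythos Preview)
Your argument for part~(a) is correct, though it follows a different route than the paper: the paper works directly on the string and applies the operator identity $\sum_{|\lambda|=n+1}(-1)^{\ell(\lambda)}c_\lambda\,p_\lambda^{*}=0$ (coming from $e_{n+1}(X_n)=0$) to force $f_{n+1}=0$; you instead pass through the dictionary with $\K[X_{n+1}]$ and invoke the classical description of $\harm(X_{n+1})$ as partial derivatives of the Vandermonde. Both are standard and equally valid.

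For part~(b) your approach is genuinely different from the paper's, but it has a real gap. The hypothesis only says that $p$ is symmetric, and in the formal setting this means $p\in\C(q)[X_n]^{\sg_n}$; nothing forces $p$ to be $q$-independent, and your sentence ``interpreting $p$ as a nonzero, $q$-independent symmetric polynomial'' is an extra assumption, not a deduction. Concretely: after your normalisation you know $v_q(f_0)=0$ and, from $f_d=-\D d.f_0$, that $v_q(f_d)\ge 0$. The factorisation $f_0=f_d\cdot p$ then gives $v_q(p)=-v_q(f_d)\le 0$, so $p$ may well acquire a pole at $q=0$, in which case $v_q(f_d)>0$, $f_d(0)=0$, and the specialised $0$-harmonic string has length strictly less than $d$. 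Part~(a) then yields no contradiction with $d>n$. Since $p=f_0/f_d$ is determined by the string, you have no freedom left to rescale $p$ independently of the $f_i$'s, so the gap cannot be closed by a further normalisation.

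The paper avoids this difficulty by \emph{not} specialising the string. Instead it keeps $q$ formal and deforms the relation: it considers the whole family $(m_\mu f_d)_{|\mu|=n+1,\ \ell(\mu)\le n}$, which is a $q$-independent spanning set for the relevant symmetric multiples of $f_d$, and uses a rank argument to produce coefficients $C_\lambda(q)$ with $C_\lambda(0)=c_\lambda$ such that $\sum_\lambda(-1)^{\ell(\lambda)}C_\lambda(q)\,\D\lambda$ annihilates every $m_\mu f_d$. Applied to $f_0$ (which lies in the $\C(q)$-span of the $m_\mu f_d$ regardless of how $p$ depends on $q$), this gives $\bigl(\sum_\lambda C_\lambda(q)R_\lambda(q)\bigr)f_d=0$, and the constant term $\sum_\lambda c_\lambda>0$ forces $f_d=0$. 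The point is that the paper deforms the \emph{relation} from $q=0$ to formal $q$, whereas you tried to specialise the \emph{string} from formal $q$ to $q=0$; the first direction is robust against the $q$-content of $p$, the second is not.
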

(a) is a classical result, and (b) is a little extension to certain
$q$-harmonic strings using a similar proof. Note that a $q$-harmonic
string $F$ for $q$ generic will specialize at $q_0$ to a string
$F(q_0)$ of same or smaller length. In particular, the above
proposition shows that the string $F(0)$ will be of length at most
$n-1$, so all the $f_i(q), i\geq n$ are divisible by $q$.

\begin{proof}
  (a) Let $F$ be a $0$-harmonic string on $\K[X_n]$ of length $k>n$
  and degree $d=n+1$. On $n$ variables, there is an explicit relation
  between symmetric polynomials:
  \begin{equation}
    \label{eq.powersumsRelations}
    \sum_{|\lambda|=d} (-1)^{\ell(\lambda)} c_\lambda \D[0]\lambda = 0
  \end{equation}
  where the $c_\lambda$ are positive constants
  (cf.~\cite{MacDonald.SF.1995}). This comes from the fact that, on
  $n$ variables, the $n$ first power sums generate $\sym(X_n)$ as a
  $\Q$-algebra. Applying $\D[0]\lambda$ to $f_{k-d}$ yields
  $(-1)^{\ell(\lambda)} d_\lambda f_k$ where $d_\lambda$ are
  polynomials in $k$. Hence a contradiction.

  (b) Let $F$ be a string. Thanks to the string relations,
  \begin{equation}
    \D\lambda(f_{k-|\lambda|}) = (-1)^{\ell(\lambda)} R_\lambda(q) f_k
  \end{equation}
  for some polynomial $R_\lambda(q)$ with positive coefficients and
  constant term $1$.
  \begin{private}
    (it is possible to write an explicit formula $R(q) = q_\lambda$).  
  \end{private}
  
  Assume now that $d$ is the length of $F$, and that $f_0$ is of the
  form $f_d p$ with $p$ symmetric. Write the matrix $(\D\lambda(m_\mu
  f_d))_{|\lambda|=|\mu|=n+1,\ell(\mu)\leq n}$.  This matrix has one
  more column than rows. Furthermore, for $q=0$ it has maximal rank.
  Hence, for $q$ generic it is also of maximal rank, and there exists
  a unique linear combination of the rows, up to a scalar coefficient
  in $\C(q)$. We pick up such a linear combination whose coefficients
  $C_\lambda$ are polynomials in $\C[q]$ which are relatively prime in
  their set:
  \begin{equation}
    \forall \mu\,,\qquad
    \sum_\lambda (-1)^{\ell(\lambda)} C_\lambda(q) \D\lambda(m_\mu f_d) = 0.
  \end{equation}
  At $q=0$, this relation specializes, up to a coefficient $c\ne 0$,
  to the relation between the $p_\lambda$'s described in
  equation~\ref{eq.powersumsRelations}. We cancel out this
  coefficient, so that $C_\lambda(0)=c_\lambda$.

  Applying this relation on $f_{k-d}$ yields
  \begin{equation}
    0 = \sum_\lambda (-1)^{\ell(\lambda)} C_\lambda \D\lambda(f_0) =
    \left(\sum_\lambda C(q)_\lambda R_\lambda(q)\right) f_d.
  \end{equation}
  However,
  \begin{equation}
    \sum_\lambda C_\lambda(0) R_\lambda(0) = \sum_\lambda c_\lambda \ne 0.
  \end{equation}
  Hence, $f_d=0$, which is a contradiction.
\end{proof}

\begin{private}
\begin{itemize}
\item La conjecture est aussi équivalente à l'énoncé : Les harmoniques de
  degrée $d$ en $n=d+1$ variables engendrent tous les harmoniques de
  degrée $d$ sous l'action de $\K[\sg_n]$. 
  

  
  
\item Contre exemple à $q=-1$ :
  
  Le polynôme $x_1^2-x_2^2$ est $-1$-harmonique et il engendre les harmoniques
  en 3 variables. Mais, le polynôme $(x_1-x_2)(x_3-x_4)$ n'est pas engendré.
 
\end{itemize}
\end{private}

\subsection{Complete analysis of the 2 variables case}

We work over the alphabet $X=\{x\}$, and make use of the strings
machinery. For simplicity, let us use the notation $x^{[d]} :=
x^d/d!$. The Steenrod operators act by the rule:
\begin{equation}
  \D k . x^{[d]} = (1+q(d-k))\, x^{[d-1]}\,.
\end{equation}

Let $(f_i)$ be a harmonic string. Then,
\begin{equation}
  -\D1 .f_k = -\D1 . (-\D k . f_0) = -(1+kq) \D{k+1}.f_0\,.
\end{equation}
Define $A_k$ so that
\begin{equation}
  A_k(f_0) = (\D 1 \D k + (1+kq) \D {k+1}). f_0 = 0\,,
\end{equation}
and suppose that $f_0=x^{[d]}$. Then, $A_k(f_0)$ is of the form $a_k
x^{[d-k]}$, with
\begin{equation}
  a_k = (1+q(d-k-1))(1+q(d-k) +1+kq)\,.
\end{equation}
Hence, $F=\strng(x^{[d]})$ is harmonic if, and only if,
\begin{equation}
\text{for all $0< k< d$} \qquad (2+qd)(1+q(d-k-1)) = 0\,.
\end{equation}
So, there are two possible solutions, namely $d<2$ and $q=-2/d$.

Going back from strings to harmonic polynomials, we get the proposition:
\begin{prop}
  Let $q$ be formal or a complex number. Then, the $q$-harmonic
  polynomials in $\K[x,y]$ are spanned by
  \begin{equation}
    1 \quad\text{and}\quad (x-y)\,.
  \end{equation}
  if $q$ is not of the form $q=-2/d$ for $d$ integer, $d\geq 2$, and
  by
  \begin{equation}
    1,\quad (x-y),\quad\text{and}\quad (x-y)(x+y)^{d-1}\,.
  \end{equation}
  otherwise.
\end{prop}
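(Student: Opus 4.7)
The plan is to exploit the one-variable string machinery just developed. By Proposition \ref{prop.String} the $q$-harmonic polynomials $f\in\K[x,y]$ are in bijection with harmonic strings $(f_i)_{i\geq 0}$ in $\K[x]$ via $f(x,y) = \sum_{i\geq 0} (1/i!)f_i(x)y^i$, with $f_i = -\D i . f_0$. Because the $\D k$ are homogeneous operators, the harmonicity conditions decouple on total-degree components, so I may restrict attention to $f_0$ homogeneous of degree $d$, i.e. proportional to $x^{[d]}$, and determine for each $d\geq 0$ the values of $q$ making $\strng(x^{[d]})$ harmonic.

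Using the reduction to $\D 1$-compatibility from the preceding proposition, the requirement is $A_k(x^{[d]}) = 0$ for all $k\geq 1$, where $A_k := \D 1\D k + (1+kq)\D{k+1}$. The identity $\D k . x^{[d]} = (1+q(d-k))\,x^{[d-k]}$ yields
\begin{equation*}
A_k(x^{[d]}) = (1+q(d-k-1))(2+qd)\, x^{[d-k-1]},
\end{equation*}
which vanishes automatically for $k \geq d$. The degrees $d\in\{0,1\}$ produce no further condition and yield the always-present harmonics $1$ and $x-y$ (the latter obtained from $f_1 = -\D 1 . x = -1$).

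For $d\geq 2$ the key observation is that taking $k=d-1$ gives $A_{d-1}(x^{[d]}) = 1\cdot(2+qd)$, so the string cannot be harmonic unless $q = -2/d$. Conversely, when $q = -2/d$ the factor $2+qd$ vanishes identically, so all the equations $A_k(x^{[d]})=0$ hold simultaneously, producing one extra harmonic string of degree $d$. In particular at most one exceptional degree $d$ can appear for any given $q$.

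Finally, to identify this extra harmonic I reconstruct the polynomial. With $q = -2/d$, the formula $f_i = -(1+q(d-i))\,x^{[d-i]}$ simplifies to $f_i = ((d-2i)/d)\,x^{[d-i]}$, so the coefficient of $x^{d-i}y^i$ in $f(x,y)$ is, up to a uniform scalar, $\binom{d}{i}(d-2i)/d$. The Pascal-type identity $\binom{d-1}{i} - \binom{d-1}{i-1} = \binom{d}{i}(d-2i)/d$ shows that these are precisely the coefficients in the expansion of $(x-y)(x+y)^{d-1}$, yielding the stated basis. No serious obstacle arises: the proof reduces to the single computation of $A_k(x^{[d]})$ together with this elementary binomial match.
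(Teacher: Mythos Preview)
Your argument is correct and follows essentially the same route as the paper: reduce to one-variable strings with $f_0=x^{[d]}$, compute $A_k(x^{[d]})=(1+q(d-k-1))(2+qd)\,x^{[d-k-1]}$, and read off the dichotomy $d<2$ versus $q=-2/d$. You go a bit further than the paper by explicitly matching the resulting string to $(x-y)(x+y)^{d-1}$ via the binomial identity $\binom{d-1}{i}-\binom{d-1}{i-1}=\binom{d}{i}(d-2i)/d$, which the paper simply asserts.
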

In particular, if $n\geq2$ and $d>n(n-1)/2$ the polynomial
$f:=(x_1-x_2)(x_1+x_2)^{d-1}$ is $q_0$-harmonic for $q_0:=-2/d$.
However, the highest degree of a harmonic polynomial in $n$ variables
is $n(n-1)/2$. This shows that conjecture~\ref{conj.main} would not
hold without condition on $q$.
\begin{prop}
  Take $d>2$, and $n>=2$ such that $n(n-1)/2<d$. Then, the statement
  of conjecture~\ref{conj.main} would not hold for $q_0:=-2/d$.
\end{prop}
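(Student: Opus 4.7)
The plan is to exhibit an explicit $q_0$-harmonic polynomial in $\K[X_n]$ of degree $d$, which is larger than the maximal possible degree $n(n-1)/2$ of a classical harmonic polynomial. Since the conjecture predicts $\qharm[q_0]$ to be isomorphic to $\harm(X_n)$ as a \emph{graded} $\sg_n$-module, the existence of a harmonic polynomial in degree $d$ for the $q_0$-deformed theory immediately breaks the conjecture.

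First I would take the polynomial $f := (x_1-x_2)(x_1+x_2)^{d-1}$, which by the proposition directly preceding the statement is $q_0$-harmonic in $\K[x_1,x_2]$, where $q_0 = -2/d$. Then I would check that this polynomial, viewed as an element of $\K[X_n]$, remains $q_0$-harmonic. This is essentially automatic: the operators $\D k$ split as a sum over variables, $\D k(X_n) = \D k(x_1,x_2) + \sum_{i\geq 3}(1+qx_i\partial_i)\partial_i^k$, and the summands with $i\geq 3$ kill $f$ because $\partial_i f=0$. So $\D k.f = \D k(x_1,x_2).f = 0$ by the 2-variable computation, for all $k\geq 1$; hence $f\in\qharm[q_0]$.

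Next, I would recall that $\harm(X_n)$ has Hilbert series
\begin{equation*}
(1+t)(1+t+t^2)\cdots(1+t+\dots+t^{n-1}),
\end{equation*}
so its top degree is exactly $n(n-1)/2$. By the hypothesis $n(n-1)/2 < d$, the degree-$d$ component of $\harm(X_n)$ is zero, whereas $f$ provides a nonzero element of $\qharm[q_0]$ in degree $d$. In particular, the graded dimensions of $\qharm[q_0]$ and $\harm(X_n)$ cannot agree in degree $d$, so the two spaces cannot be isomorphic as graded $\sg_n$-modules; this contradicts the conclusion of conjecture~\ref{conj.main}.

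There is no real obstacle in this argument: all the work has been done in the preceding proposition giving the 2-variable classification of $q$-harmonics, and the only minor verification is the stability of harmonicity under enlargement of the alphabet, which is immediate from the primitive-like formula $\D k(X,y)=\D k(X)+\D k(y)$ already established in the proof of proposition~\ref{prop.equivShortString}. If anything deserves care, it is just to write out clearly why the conjecture truly predicts concentration of $\qharm[q_0]$ in degrees $\leq n(n-1)/2$: this follows from the fact that $\harm(X_n)$ is the graded regular representation, whose top degree is $n(n-1)/2$ (the degree of the Vandermonde), so any polynomial of higher degree in $\qharm[q_0]$ is already an obstruction.
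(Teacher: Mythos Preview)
Your proposal is correct and follows essentially the same argument as the paper: exhibit $f=(x_1-x_2)(x_1+x_2)^{d-1}$ as a $q_0$-harmonic of degree $d>n(n-1)/2$ via the two-variable classification and the primitivity $\D k(X_n)=\D k(x_1,x_2)+\sum_{i\geq3}\D k(x_i)$, then contradict the graded isomorphism with $\harm(X_n)$ whose top degree is $n(n-1)/2$. The paper gives exactly this reasoning in the paragraph immediately preceding the proposition.
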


\begin{private}

\subsubsection{Cas de 3 variables}

Soit $h$ un harmonique en $3$ variables dont la tête est singulière.
Alors il n'est pas combinaison linéaire d'harmoniques en $2$
variables. En effet, ces harmoniques seraient singuliers pour un $q$
différent.

Si un polynome de degré $d>2$ dont la tête en facteur de $z$ est
harmonique alors sa tête est singulière. Exemple : Le polynome
$$3(x^2-y^2)z + x^3 - y^3$$
$$(x - y) (x y + 3 x z + 3 y z + x^2 + y^2 )$$
est $-1$ harmonique.
Mais, il n'y a pas de polynome harmonique dont la tete est
$(x-y)(x+y)^2 z$. On doit pouvoir traiter la suite (tête =
$(x-y)(x+y)^k z$) par récurence ?

$$P^*_{i,-1}((x-y)(x+y) z^{i+1} )$$
Plus généralement :
$$P^*_{i,-2/(d+1)}((x-y)(x+y)^d z^{i+(d+1)/2} )$$

Proof : Par multiplicativité des alphabets à partir de
$P^*_{i,-2/(d+1)} ((x-y)(x+y)^d) = 0$ et $P^*_{i,q} (z^{i-1/q}) = 0$.

À $q=-1$ on a $(x-y)(x^2 + y^2 + 4xy + 3z^2)$, modulo le Vandermonde
il est égal à $ x^3 + 3x^2z - y^3 - 3y^2z$ que l'on a trouvé plus haut
!!!

À $q=-1/(n-1)$ $Vandermonde*e1$ est harmonique pour $n$ variables.
\end{private}

\section{Why is the $q$ case more difficult than the $q=0$ case?}

The difficulties when trying to prove conjecture~\ref{conj.main}
arises from the fact that the algebra $\qsteen(X_n)$ is much bigger
than $\sym(X_n)$. We cite here some consequences of this; in this
section we will expand more on the last two items.
\begin{itemize}
\item $\K[X_n]$ is not a free $\qsteen(X_n)$ module;
\item The $\qsteen$-modules generated by two polynomials $p_1$ and
  $p_2$ may be non-isomorphic, even when $p_1$ and $p_2$ are of the
  same degree, or in the same isotypic or Garnir component;
\item There are no proper $q$-analogs of elementary symmetric
  functions $e_k$ which vanishes on $X_n$ when $k>n$;
\item Their are very few operators which commute with the action of
  $\qsteen$.
\end{itemize}
With this in view, conjecture~\ref{conj.main} almost comes as a
surprise: why isn't the increased size of $\qsteen(X_n)$
counterbalanced by fewer $q$-harmonics ?
 
\commentaire{More examples and counter examples...}

\subsection{Elementary symmetric functions do not exist}
\label{subsection.Elementary}

As mentioned in section~\ref{subsection.HitsHarmonics}, one way to
treat the classical case $q=0$ is to construct an explicit Gröbner
basis of the ideal $\hit$. This construction relies on the existence
of the elementary $e_k$ and complete $h_k$ symmetric functions which
satisfy for any $k$
\begin{equation}
  \sum_{i=1}^k e_i h_{k-i}=0, \qquad
  e_k(X+Y) = \sum_{i=1}^k e_i(X) e_{k-i} (Y),
\end{equation}
and, last but not least,
\begin{equation}
  e_k(X) = 0, \qquad \text{whenever $k>|X|$}.
\end{equation}
Finding $q$-analogs of the elementary and complete symmetric functions
which satisfy the first two conditions is feasible, for example by
considering $\qsteen$ as an appropriate quotient of the ring of non
commutative symmetric functions. On the other hand, the third
condition does not transfer properly. Indeed, it appears from our
computations that, for $n\leq 5$, the operators
$(\P\lambda(X_n))_{\lambda\partof n+1}$ are linearly independent (see
Sections~\ref{tables.cones}). Hence, no linear combination of them
vanishes on $X_n$, and can serve as a reasonable $q$-analog of
$e_{n+1}$.

\subsection{Schubert Polynomials do not exist}
\label{subsection.Schubert}

\newcommand{\forallini}{\text{for $1\leq i \leq n-1$,}}
\newcommand{\foralliji}{\text{for $|i-j|>1$,}}
\newcommand{\forallind}{\text{for $1\leq i \leq n-2$.}}
In the classical case, the quotient $\K[X]/\sym^+(X)$ is not only
isomorphic to the regular representation of $\sg_n$ but also to many
deformations of it; among them one can find Hecke algebras. The more
general construction has been described by Lascoux and Schützenberger
in \cite{LascouxSchutzenberger87}, with a $5$-parameters deformation.

Recall that the symmetric group is generated by the elementary
transpositions $\sigma_i$ which exchange $i$ and $i+1$ for $i<n$. A
presentation is given by the relations
\begin{alignat}{2}\label{eq.presentSGn}
  \sigma_i^2&=1,                                \qquad&&\forallini\notag\\
  \sigma_i\sigma_j&=\sigma_j\sigma_i,           \qquad&&\foralliji\\
  \sigma_i\sigma_{i+1}\sigma_i &=\sigma_{i+1}\sigma_i\sigma_{i+1},
  \qquad&&\forallind\notag
\end{alignat}%
The last two relations are called the \emph{braid relations}, since
they give a presentation of the braid group. A decomposition $\sigma =
\sigma_{i_1}\sigma_{i_2}\cdots\sigma_{i_k}$ of minimal length is called
a \emph{reduced word} for $\sigma$.

The Hecke algebras $H_n(v_1,v_2)$ are a $2$-parameters family of
quotients of dimension $n!$ of the braid algebra, generated by $T_i$
with the relations
\begin{alignat}{2}\label{eq.presentHecke}
  (T_i-v_1)(T_i-v_2)&=0,                \qquad&&\forallini\notag\\
  T_iT_j&=T_jT_i,                       \qquad&&\foralliji\\
  T_iT_{i+1}T_i &=T_{i+1}T_iT_{i+1},    \qquad&&\forallind\notag
\end{alignat}%
Note that, as for the $(q,q')$-Steenrod algebra, the structure of the
algebra depends only on the quotient $v_1/v_2$.

Here, the useful special case is $v_1=v_2=0$. Indeed, $H_n(0,0)$ acts
on polynomials by the \emph{divided differences operators}
\begin{equation}
  \label{eq.diffdiv}
  d_i(f) := \frac{f-\sigma_i f}{x_i-x_{i+1}}\,. 
\end{equation}
To avoid confusion with the derivations we use $d_i$ instead of the
usual notation $\partial_i$. Note that the $d_i$'s are homogeneous
operators of degree $-1$ on polynomials.

Given a reduced word $\sigma_{i_1}\sigma_{i_2}\cdots\sigma_{i_k}$ for a
permutation $\sigma$, define $d_\sigma:=d_{i_1}d_{i_2}\dots d_{i_k}$.
Thanks to the braid relations, $d_\sigma$ is independent of the choice
of the reduced word for $\sigma$. The family
$(d_\sigma)_{\sigma\in\sg_n}$ happens to be a basis of $H_n(0,0)$.

\medskip

The key point here is the following proposition:
\begin{prop}
  \label{prop.DivDiffSym}
  The divided differences commute with the action of symmetric
  functions: for any $p\in\sym(X_n)$, any $i<n$ and any
  polynomial $f$,
  \begin{equation}
    d_i(p f) = p \,d_i(f)\,.
  \end{equation}
  The quotient $\K[X_n]/\sym^+(X_n)$ is isomorphic to the regular
  representation of the divided differences algebra $H_n(0,0)$.
\end{prop}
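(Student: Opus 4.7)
The commutation identity is immediate: since $p\in\sym(X_n)$ satisfies $\sigma_i(p)=p$, we have
\begin{equation}
d_i(pf) = \frac{pf - \sigma_i(p)\,\sigma_i(f)}{x_i - x_{i+1}} = p\cdot\frac{f-\sigma_i(f)}{x_i-x_{i+1}} = p\,d_i(f)\,.
\end{equation}
A direct consequence is that the ideal $\sym^+(X_n)\K[X_n]$ is stable under each $d_i$, so the $H_n(0,0)$-action descends to the quotient $\K[X_n]/\sym^+(X_n)$.

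To identify this quotient with the regular representation of $H_n(0,0)$, the plan is to follow the classical Lascoux--Schützenberger construction. Set $\delta:=(n-1,n-2,\dots,1,0)$ and define the Schubert polynomials by $\schub_{w_0}:=x^\delta$ and $\schub_\sigma := d_{\sigma^{-1}w_0}(x^\delta)$ for every $\sigma\in\sg_n$, where $w_0$ denotes the longest permutation; these are well-defined thanks to the braid relations, as already used for $d_\sigma$. The proof then splits into two combinatorial steps. First, one establishes the divided difference rule
\begin{equation}
d_i(\schub_\sigma) =
\begin{cases}
\schub_{\sigma\sigma_i} & \text{if $\ell(\sigma\sigma_i)=\ell(\sigma)-1$,}\\
0 & \text{otherwise,}
\end{cases}
\end{equation}
the second case relying on the easy identity $d_i^2=0$ and a choice of reduced expression for $\sigma^{-1}w_0$ beginning with $\sigma_i$. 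Second, one shows that the leading monomial of $\schub_\sigma$ for the lexicographic order is the Lehmer code monomial $x^{c(\sigma)}$, which is sub-staircase; since the codes are pairwise distinct, the family $(\schub_\sigma)_{\sigma\in\sg_n}$ is linearly independent in $\K[X_n]$, and a dimension count (using that $\K[X_n]/\sym^+(X_n)$ has dimension $n!$) shows that it descends to a basis of the quotient.

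The linear map $\phi:H_n(0,0)\to\K[X_n]/\sym^+(X_n)$ defined by $\phi(d_\sigma):=\schub_{w_0\sigma^{-1}}$ is then a bijection by the previous step, and the divided difference rule translates into $\phi(d_i\cdot d_\sigma)=d_i\cdot\phi(d_\sigma)$ after observing that the length condition $\ell(\sigma_i\sigma)=\ell(\sigma)+1$ corresponds exactly to $\ell(w_0\sigma^{-1}\sigma_i)=\ell(w_0\sigma^{-1})-1$. This makes $\phi$ an isomorphism of left $H_n(0,0)$-modules, as desired. The main technical work sits in the divided difference rule: although standard, establishing both the descent case and the vanishing on non-descents cleanly requires the Leibniz-type identity $d_i(fg)=d_i(f)\,g+\sigma_i(f)\,d_i(g)$ together with careful book-keeping of reduced words; the identification of leading monomials and the module equivariance are then formal.
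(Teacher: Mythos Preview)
The paper does not actually prove this proposition: it is stated as a classical result, with a reference to the geometric origin in the work of Demazure and Bernstein--Gelfand--Gelfand, and the Schubert polynomial machinery is then recalled immediately afterwards as background. So there is no ``paper's own proof'' to compare against.

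Your argument is correct and is precisely the standard Lascoux--Sch\"utzenberger route that the paper alludes to in the surrounding text. The commutation identity is the one-line computation you give; the passage to the quotient is immediate; and the identification with the regular representation via the Schubert basis and the divided difference rule is exactly the classical proof. Your indexing conventions ($\schub_\sigma = d_{\sigma^{-1}w_0}(x^\delta)$ and $\phi(d_\sigma)=\schub_{w_0\sigma^{-1}}$) differ cosmetically from the paper's ($\schub_\sigma = d_{\sigma\omega}(x^\rho)$), but the length bookkeeping you do checks out and the equivariance of $\phi$ is verified correctly. The only mild overstatement is calling the leading monomial step and the dimension count ``formal'': the fact that $\dim \K[X_n]/\sym^+(X_n)=n!$ is itself a nontrivial input (Chevalley's theorem, or the explicit Gr\"obner basis mentioned elsewhere in the paper), so you are implicitly relying on it.
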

This construction was first defined in geometry, where the quotient
$\K[X_n]/\sym^+(X_n)$ is interpreted as the cohomology ring $H^*(F,
\Z)$ of the flag manifold~$F$~\cite{Demazure74,BernsteinGelfandGelfand73}.

Using the divided differences, one can construct a basis of
$\K[X_n]/\sym^+(X_n)$ composed of the so-called Schubert polynomials:
\begin{defn}
  Let $\rho := (n-1,n-2,\dots,1,0)$, so that $X^\rho =
  x_1^{n-1}x_2^{n-2}\dots x_{n-1}$. For each permutation
  $\sigma\in\sg_n$ the \emph{Schubert polynomial} $\schub_\sigma$ is
  defined by
  \begin{equation}
    \label{eq.defSchubertPol}
    \schub_\sigma := d_{\sigma\omega}(x^\rho)\,,
  \end{equation}
  where, for technical reasons, $\omega$ has been chosen to be the
  maximal permutation of $\sg_n$.
\end{defn}
For more details about Schubert polynomials, we refer
to~\cite{MacDonald.1991}. Note that they are denoted by $\sg_\sigma$
in~\cite{MacDonald.1991}, and by $X_\sigma$ in Lascoux and
Schützenberger
work~\cite{LascouxSchutzenberger82,LascouxSchutzenberger87}.

The main theorem is
\begin{thm}[Lascoux-Schützenberger, 1982]
  The family of Schubert polynomials
  $(\schub_\sigma)_{\sigma\in\sg_n}$ is a basis of $\K[X_n]$ as a free
  $\sym$-module.  In particular, the family of Schubert polynomial
  $(\schub_\sigma)_{\sigma\in\sg_n}$ is a basis of $\K[X_n]/\sym^+$.
\end{thm}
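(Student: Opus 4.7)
The plan is to prove linear independence of the Schubert polynomials in $\K[X_n]/\sym^+(X_n)$ via a triangularity argument with respect to the lexicographic order, and then conclude by a dimension count, using that $\K[X_n]$ is a free $\sym(X_n)$-module of rank $n!$.

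First, I would set up the basic calculus of divided differences. A direct computation shows $d_i^2 = 0$ (the numerator of~\eqref{eq.diffdiv} applied to $d_i(f)$ is antisymmetric in $x_i, x_{i+1}$ and divisible by $x_i - x_{i+1}$ twice, giving zero). Verifying the braid relations $d_i d_{i+1} d_i = d_{i+1} d_i d_{i+1}$ and the commutations $d_i d_j = d_j d_i$ for $|i-j|>1$ (both classical, cf.~\cite{MacDonald.1991}) shows that $d_\sigma$ depends only on $\sigma$, not on the chosen reduced word. Combined with Proposition~\ref{prop.DivDiffSym}, this makes each $d_\sigma$ a $\sym(X_n)$-linear endomorphism of $\K[X_n]$ of degree $-\ell(\sigma)$, so that $\schub_\sigma = d_{\sigma\omega}(x^\rho)$ is a well-defined homogeneous polynomial of degree $\ell(\sigma)$.

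The heart of the proof is the following triangularity property: for each $\sigma\in\sg_n$, the leading monomial of $\schub_\sigma$ in the lexicographic order is the staircase monomial $x^{c(\sigma)}$, where $c(\sigma) = (c_1(\sigma),\dots,c_n(\sigma))$ is the Lehmer code $c_i(\sigma) := \#\{j > i : \sigma(j) < \sigma(i)\}$, which satisfies $c_i(\sigma) \leq n-i$. I would establish this by induction on $\ell(\omega) - \ell(\sigma)$, starting from $\schub_\omega = x^\rho$ (since $d_e$ is the identity), and analysing how a single divided difference $d_i$ transforms the leading monomial: if $x^K$ has $k_i > k_{i+1}$, then $d_i(x^K) = x^{K - \varepsilon_i} (x_i^{k_i - k_{i+1} - 1} + \text{lower}) + \cdots$, whose leading term is precisely the monomial corresponding to applying the transposition $\sigma_i$ to the code in the appropriate way. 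Since the codes $c(\sigma)$ as $\sigma$ ranges over $\sg_n$ are exactly the $n!$ staircase exponent vectors, the leading monomials of the $\schub_\sigma$ are pairwise distinct, so the family $(\schub_\sigma)_{\sigma\in\sg_n}$ is linearly independent over $\K$.

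Finally, the dimension count: the classical computation recalled at the end of Subsection~\ref{subsection.HitsHarmonics} gives $\hilb(\K[X_n]/\sym^+(X_n)) = [n]_t!$ with total dimension $n!$, and shows $\K[X_n]$ is free over $\sym(X_n)$. Hence the $n!$ linearly independent staircase monomials span $\K[X_n]/\sym^+(X_n)$, and by the leading-term triangularity the $\schub_\sigma$ also span this quotient. Nakayama's lemma for the graded ring $\sym(X_n)$ with maximal ideal $\sym^+(X_n)$ (or equivalently a direct Hilbert series comparison applied to the graded surjection $\sym(X_n)^{\oplus n!} \twoheadrightarrow \K[X_n]$ sending basis vectors to the $\schub_\sigma$) then upgrades this to a $\sym(X_n)$-module basis of $\K[X_n]$. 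The main obstacle is the triangularity step: controlling the leading term of $d_i$ applied to a monomial requires a careful combinatorial analysis of the interaction between reduced words for $\sigma\omega$ and the Lehmer code, and this is where all the subtlety of the Lascoux--Schützenberger construction resides.
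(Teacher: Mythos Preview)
The paper does not supply a proof of this theorem: it is stated as a known result of Lascoux and Sch\"utzenberger (1982), with a reference to \cite{MacDonald.1991} for background on Schubert polynomials, and is used only as motivation for the discussion of why the divided-difference machinery fails to generalize to the $q$-Steenrod setting. So there is nothing in the paper to compare your argument against.

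That said, your outline is one of the standard proofs and is essentially correct. The triangularity of $\schub_\sigma$ with respect to the Lehmer-code monomials $x^{c(\sigma)}$ under the lexicographic order is the classical route (this is exactly the argument in Macdonald's notes); once that is established, linear independence over $\K$ is immediate, and the passage from a basis of the quotient $\K[X_n]/\sym^+(X_n)$ to a free $\sym(X_n)$-module basis via graded Nakayama (equivalently, a Hilbert-series comparison using the already-known freeness of $\K[X_n]$ over $\sym(X_n)$) is clean. The one place where your sketch is genuinely compressed is the inductive step for the leading term under $d_i$: you need that for the particular reduced word you traverse, the intermediate leading monomials always satisfy $k_i > k_{i+1}$ at the moment you apply $d_i$, so that the top term survives rather than cancelling. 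This is precisely what the combinatorics of the Lehmer code along a reduced word for $\sigma\omega$ guarantees, but it does require a careful choice of reduced word (or an invariance argument), and you are right to flag it as the subtle point.
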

Moreover the divided difference machinery provides a very efficient
algorithm to decompose a polynomial in this basis. 

\bigskip

This machinery relies heavily on the commutation property of
proposition~\ref{prop.DivDiffSym}. One strategy to prove the
conjecture~\ref{conj.main} is to search for a family of linear
operators of degree $-1$ which commute with the action of $\qsteen$,
hoping that they will generate a $q$-analog of the divided differences
algebra.  Unfortunately, we obtained by a computation that no such
operator exists for, for example, $n=1,2,3,4$.

Maybe this machinery could still be made to work with some weaker skew
commutation condition. A precise statement of this might be:
\begin{problem}
  Does there exist an operator $T$ of degree $-1$ on polynomials such
  that for any $f\in\qsteen$, there exists $g\in\qsteen$ such that
  \begin{equation}
    f\, T = T\, g\,.
  \end{equation}
  What is the structure of the algebra generated by such operators ?
\end{problem}

For a similar reason, we could not generalize the fact that $\harm$ is
spanned by the derivatives of the Vandermonde. Indeed, there are no
reasonable $q$-analogs of the derivations which commute with the
operators $\D k$'s.

\begin{private}

For $>=$: try to analyze the q-higher Spechts and give an explicit
description of a basis of $\qharm$


One approach is to search for a nice twist sending the harmonics on
the $q$-harmonics. See the file isobar.mu for notes on this.


Let $X$ and $Y$ be two disjoint alphabets, and let $k:=|X|+1$. In
$\sym$, the following relation holds between the $e_i$ and
$h_i$~\cite[p.12]{Sturmfels.AIT,MacDonald.SF.1995}:
\begin{equation}
  \label{eq.GbHit}
  \sum_{i=0}^k (-1)^i e_i(X+Y) h_{k-i}(Y) = 0
\end{equation}
Now, take $X:=(x_1,\dots,x_{k-1})$, $Y:=(x_k,\dots,x_n)$ and let
$m:=x_1^{m_1}\dots x_n^{m_n}$ be a monomial in $\K[X_n]$ such that
$m_k\geq k$. Let $p:=m/x_k^k$. Applying the relation above yields:
\begin{equation}
  h_k(x_k,\dots,x_n) . p
  = - \sum_{i=1}^k (-1)^i e_i(x_1,\dots,x_n) . (h_{k-1}(x_k,\dots,x_n).p)
\end{equation}
Under the lexicographic order with $x_1>\dots,>x_n$, the leading term
of the left hand side is $m$, while the right hand side is in $\hit$.
It follows that the set of standard monomials with respect to $\hit$
is a subset of the staircase monomials. Actually, the relation above
provides precisely a Gröbner basis of the ideal $\hit$ generated by
symmetric functions, the standard monomials being all the staircase
monomials.

\begin{problem}
  Can we find some $q$-analogue in $\qsteen$ of the
  relation~\ref{eq.GbHit} ?
\end{problem}
If yes, we could generalize the above trick. To be precise, the
coefficient for $m$ in $h_k(x_k,\dots,x_n) . p$ would typically be
some polynomial in $q$, say $1+qm_k$, which should not vanish; we
could probably deduce an explicit characterization of the badly
behaving $q_0$. Then, for all the other $q_0\in \C$, we would obtain
$\hilb(\qharm[q_0])\leq\hilb(\harm)$.

Direction: compute with Maple a Gröbner basis in the Weyl algebra for
the right ideal generated by the Steenrod Algebra.
\end{private}

\include{tables}

\bibliographystyle{alpha}
\bibliography{main}

\end{document}